\documentclass{article}
\usepackage{amsmath,amsthm,amssymb,amscd,fancybox,epsfig,float,subfigure}
\usepackage{graphics}
\usepackage{epstopdf}
\usepackage{multirow}
\usepackage{hhline}
\usepackage[nocompress]{cite}

\newcommand{\beqn}{\begin{equation}}
\newcommand{\eeqn}{\end{equation}}

\newcommand{\bR}{\mathbb{R}}

\newcommand{\cF}{\overline{\bf F}}
\newcommand{\cU}{\overline{\bf U}}

\newcommand{\bP}{{\bf P}}

\newcommand{\bq}{{\bf q}}

\newcommand{\bu}{{\bf u}}

\newcommand{\bphi}{{\boldsymbol \phi}}
\newcommand{\bmu}{{\boldsymbol \mu}}
\newcommand{\brho}{{\boldsymbol \rho}}
\newcommand{\bC}{{\bf C}}

\newcommand{\bF}{{\bf F}}
\newcommand{\bRes}{{\bf R}}
\newcommand{\bM}{{\bf M}}

\newcommand{\bx}{\mathbf{x}}
\newcommand{\bn}{\mathbf{n}}
\newcommand{\by}{\mathbf{y}}

\newtheorem{thm}{Theorem}

\newtheorem{lem}{Lemma}
\newtheorem{rem}{Remark}

\title{Integral equation methods for elastance and mobility problems
in two dimensions}

\author{Manas Rachh
\thanks{Courant Institute of Mathematical Sciences, 
         New York University. 
         New York, NY 10012-1110.
         ({\tt rachh@cims.nyu.edu}).}
\and L. Greengard
\thanks{Simons Foundation and Courant Institute of Mathematical Sciences, 
         New York University.
         ({\tt greengard@cims.nyu.edu}).}
}

\begin{document}

\maketitle

\begin{abstract}
We present new integral representations in two dimensions for the elastance problem
in electrostatics and the mobility problem in Stokes flow.
These representations lead to 
resonance-free Fredholm integral equations of the second kind and
well conditioned linear systems upon discretization. By coupling our integral 
equations with high order quadrature and fast multipole acceleration, large-scale
problems can be solved with only modest computing resources. 
We also discuss some applications of these boundary value problems in applied physics.
\end{abstract}

\section{Introduction}

A classical problem in electrostatics is the analysis of capacitance.
Briefly stated,
for an {\em open} system in two dimensions, this concerns a collection of 
$N$ disjoint, bounded regions, denoted by $D_j$, with boundaries 
$\Gamma_j$, all of which are assumed to be perfect conductors. 
Setting the potential (the voltage)
on the $j$th conductor to $\phi_j$ for $j = 1,\dots,N$, 
one would like to determine the 
net charge $q_j$ which accumulates on the conductors $D_j$ for $j = 1,\dots,N$.
Since electrostatics is governed by a linear partial differential
equation (the Laplace equation), there is a matrix, denoted by $\bC$, such that
\begin{equation}
\bq = \bC \, \bphi \, ,
\label{capeq}
\end{equation}
where $\bphi = (\phi_1,\dots,\phi_N)$
and $\bq = (q_1,\dots,q_N)$.
The matrix $\bC$ is referred to as the {\em capacitance matrix}.

Less well-studied is the converse problem, where a fixed amount of charge is 
placed on each of the $N$ conductors, and the goal is to determine the 
corresponding, unknown, potentials. 
The matrix corresponding to that mapping is called the 
{\em elastance matrix} \cite{Smythe1975}, denote by $\bP$, with 
\[ \bphi = \bP \, \bq \, .\]
We should note that the goal is achievable, since the voltage on a perfect 
conductor is constant from Maxwell's equations \cite{JACKSON}. 
$\bP$ is the inverse of $\bC$, in a suitably-defined space, discussed in greater 
detail in the next section. 
Given either the capacitance or elastance matrix, it is straightforward
to compute the electrostatic energy $E$ of the system
\cite{JACKSON}, since
\[ E =  \frac{1}{2} \bphi^T \bq = \frac{1}{2} \bphi^T \bC \bphi = 
\frac{1}{2} \bq^T \bP \bq \, . \]
In some contexts, particularly in chip design, the capacitance problem is 
more typical \cite{Kapur00,Chew2000,Tausch1998,Weeks1970}. In others,
including some quantum mechanical settings,
the elastance problem arises more naturally 
\cite{Chang2009,VanderWiel2003,Koch2007}. The elastance matrix
is sometimes referred to as the charging energy matrix.

A similar duality exists in problems of Stokes flow. Given
$N$ disjoint, rigid bodies, denoted by $D_i$, with boundaries $\Gamma_i$, with
prescribed translational and rotational velocities, $(\mathbf{v}_i,\omega_i)$, 
the {\em resistance problem} consists
of determing the corresponding forces and torques $(\bF_i,T_i)$ 
on each of the bodies. The
{\em mobility problem} is the reverse; given 
prescribed forces and torques on each of the rigid bodies, find the corresponding 
velocities (see, for example,
\cite{Delong2014,Ichiki2001,kim2005microhydrodynamics,pozrikidis1992boundary}).
The mappings $\bRes$ and $\bM$ such that
\[ \cF = \bRes \, \cU \quad{\rm and}\quad \cU = \bM \, \cF   \]
are known as the resistance and mobility tensors.
Here, $\cU = (\mathbf{v}_1,\omega_1,\dots,\mathbf{v}_N,\omega_N)$ and 
$\cF = (\bF_1,T_1,\dots,\bF_N,T_N)$.

Reformulating the governing partial differential equation as
a boundary integral equation is a natural approach for the above problems,
since this reduces the dimensionality of the problem (discretizing the 
boundaries $\Gamma_i$ alone) and permits high order accuracy to be achieved
in complicated geometries. Moreover, boundary
integral equations can be solved in optimal or nearly optimal time using 
suitable fast algorithms \cite{CHEW,LIU,acta1997,Nishimura}, and
satisfy the far field boundary conditions
necessary to model an open system without the need for artificial truncation
of the computational domain. 

In this paper, we will restrict our attention largely
to the formulation of suitable integral equations for the elastance
and mobility problems.
There is a substantial literature on
the development of well-conditioned second-kind Fredholm equations to address
the capacitance problem, which we do not seek to review here. 
We simply note that to apply $\bC$ to a vector $\bphi$ is equivalent
(in the two-dimensional setting) to solving the 
Dirichlet problem:
\begin{align}
\Delta u(\bx) & =  0\quad \bx\in 
\bR^2 \setminus \left( \cup_{i=1}^N \overline{D_i} \right)  \\
u|_{\Gamma_{j}} & =  \phi_{j} \, ,
\end{align}
together with the radiation condition that
$u(\bx)$ be bounded as $\left|\bx\right|\to\infty$.
The boundedness of $u(\bx)$ enforces charge neutrality on the collection of 
conductors. To see this, note that standard multipole estimates imply that
\[ u(\bx) \to  C + \frac{Q}{2 \pi} \log |\bx|\]
as $\left|\bx\right|\to\infty$, where $C$ is constant and 
$Q$ is the net charge induced on all the conductors. Thus, 
there would be logarithmic growth of the potential at infinity if 
the system were not charge neutral.

\begin{rem}
It is worth noting that the constant $C$ {\em cannot} be specified independently.
If, for example, $\phi_j$ were set to 1 on all conductors, the solution
to the Dirichlet problem must be $u(\bx) = 1$ (under the assumption of
charge neutrality), so that $C=1$.
\end{rem}

The Dirichlet problem, as noted above, is well-known to have a unique solution and
a variety of well-conditioned integral equations have been
derived for its solution (see
\cite{GGM,Helsing2005,mikhlin1964integral,Nabors1994,Tausch1998}
and the references therein).
The resistance problem involves solving the Stokes equations
with boundary conditions imposed on the velocity, for which there are,
again, a large number of well-conditioned formulations
\cite{Biros2003,GKM,kim2005microhydrodynamics,Power1987,pozrikidis1992boundary}.

Suitable integral representations have been developed for 
both the elastance and mobility
problems, often in the form of 
first kind integral equations
\cite{Cortez2005} or second kind integral equations with $N$ additional
unknowns and $N$ additional constraints
\cite{Kropinski1999}. While these have been shown to be very effective,
when $N$ is large and the geometry is complex, it is advantageous
to work with formulations that are both well-conditioned (formulated
as second kind boundary integral equations) and free of 
additional unknowns and constraints.
We develop such an approach for the electrostatic problem first, 
in section \ref{sec:elast},
followed by the Stokes mobility problem in section \ref{sec:mob}.
We illustrate their effectiveness with numerical
examples in section \ref{sec:num} and discuss generalizations 
in section \ref{sec:concl}. 

\begin{rem}
We note that the elastance 
problem can be interpreted as a special case
of a modified Dirichlet problem, and second kind Fredholm
integral equations for the modified Dirichlet problem are developed and discussed in 
\cite{mikhlin1964integral}.
In our representation, we compute the physical
charge density on each conductor directly and stably.
For the representation discussed in \cite{mikhlin1964integral}, the charge density
is given in terms of a hypersingular integral.

Second kind integral equations for mobility problems without additional constraints
were developed earlier by 
Kim and Karrila \cite{karrila1989integral}, using the Lorentz reciprocal identity. 
Our derivation, which leads to essentially the same integral equation, 
is direct - based on the physical principle that the 
interior of a rigid body must be stress free. 
Finally, the mobility problem can also be solved 
using a double layer representation without additional unknowns.
For a detailed discussion of this representation, 
we refer the reader to \cite{pozrikidis1992boundary,af2014fast}. 
Our formulation has the advantage that certain derivative quantities, 
such as fluid stresses, can be computed with weakly singular instead of hypersingular
kernels. 
\end{rem}

\section{The Elastance Problem} \label{sec:elast}

Given the set of conductors
$\left\{ D_{i}\right\} _{i=1}^{N}$, let us assume that the boundaries 
$\Gamma_{i}=\partial{D_{i}}$ are positively oriented.
We will denote by $\Gamma$ the total boundary $\Gamma=\cup_{i=1}^{N}\Gamma_{i}$ 
and by $\bn_{\bx}$ the
outward normal at $\bx\in\Gamma$. We let 
$E= \bR^2 \setminus \left(\cup_{i=1}^{N} \overline{D_{i}}\right)$ denote
the exterior domain. For the sake of simplicty, we assume that the 
conductors have smooth boundaries (Fig. \ref{Fig:1}.)

\begin{figure}[H]
\centering
\includegraphics[scale=0.3]{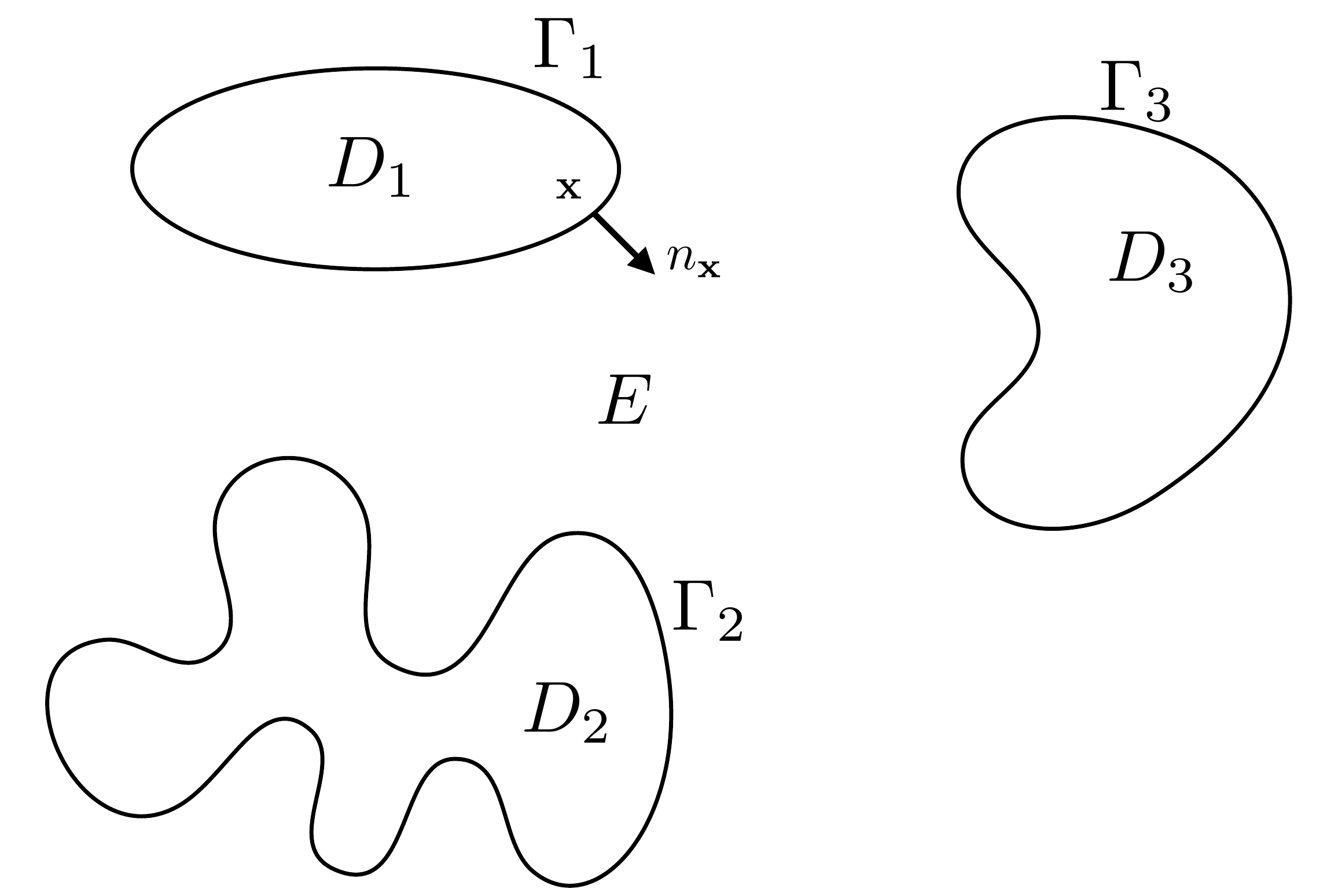}
\caption{Three, smooth bounded conductors in the plane, with the exterior
domain denoted by $E$.
For $\bx$ on the boundary, $\bn_{\bx}$ represents the outward normal.
\label{Fig:1}}
\end{figure}

Application of the elastance matrix to a vector of charge strengths
$\mathbf{q}=\left(q_{1},q_{2}\ldots q_{N}\right)$ is equivalent to
the solution of the following boundary value problem for the potential
$u(\bx)$ in the exterior domain $E$: 
\begin{align}
\Delta u(\bx) & =  0\qquad \bx\in E\label{eq:Harmonicity}\\
u|_{\Gamma_{j}} & =  \phi_{j}\label{eq:Equipotential}\\
-\int_{\Gamma_{j}}\frac{\partial u}{\partial n}ds_{\bx} & = q_{j}\label{eq:ChargeConstraints}\\
u\left(\mathbf{x}\right)&\to 0  \mbox{\quad as }
\left|\mathbf{x}\right|\to\infty \, . \label{eq:GrowthAtInfty}
\end{align}
Here, $u(\bx)$ and the constants $\left\{ \phi_{j}\right\} _{j=1}^{N}$
are unknown. As noted in the introduction, it is a consequence of the 
Maxwell equations that the potential on each
distinct
conducting surface is constant, so that the boundary 
condition \eqref{eq:Equipotential} corresponds to the physical problem
of interest. 
\eqref{eq:ChargeConstraints} enforces the desired charging of the 
individual conductors, and 
\eqref{eq:GrowthAtInfty} corresponds to setting the potential
at infinity to zero (ground). 

\begin{rem} ({\bf Charge neutrality}):
It is often said that the elastance matrix $\bP$ is the inverse of 
the capacitance matrix $\bC$. Unfortunately,
it is straightforward to verify that, for the vector of potential
values $\bphi_0 = (1,\dots,1)$, we have $\bC \bphi_0 = 0$, so that
$\bC$ is not actually invertible.
Likewise, the elastance
boundary value problem, as stated above, cannot be solved unless
$\bq=\left(q_{1},q_{2}\ldots q_{N}\right)$ satisfies 
\begin{equation}
\sum_{j=1}^{N} q_j = 0.
\label{eq:ChargeConservation}
\end{equation}
Otherwise, $u(\bx)$ would have logarithmic growth at infinity.
That is the sense in which $\bP$ is the inverse of $\bC$ - as a map 
defined on the space of mean zero vectors in $\bR^N$.
\end{rem}

To prove uniqueness for the elastance problem, we will need the following
lemma \cite{kress1999linear}.

\begin{lem} \label{lem:growth}
Let $u$ be a harmonic function in the exterior domain $E$ defined above,
satisfying the condition \eqref{eq:GrowthAtInfty}.
Let $B_{R}\left(0\right)$ be the ball of radius $R$ centered at
the origin and let $\partial B_{R}\left(0\right)$ be its boundary. Then,
there exist $M,R_{0}$ such that $\sup_{\partial B_{R}\left(0\right)}\left|\nabla u\right|\leq\frac{M}{R^{2}}$
for all $R\geq R_{0}$.
\end{lem}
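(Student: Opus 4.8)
The plan is to exploit the fact that a harmonic function in the exterior domain $E$ which decays at infinity admits a convergent multipole (Laurent-type) expansion near infinity, and then differentiate that expansion term by term to read off the decay rate of the gradient. First I would fix $R_0$ large enough that $\bR^2 \setminus B_{R_0}(0)$ contains all of the conductors $\overline{D_i}$ in its complement, i.e. $B_{R_0}(0)^c \subset E$. On this unbounded annular region $u$ is harmonic, and the standard exterior expansion for a harmonic function (obtained, e.g., by expanding $u$ in a Fourier series in the angular variable on circles $\partial B_R(0)$ and solving the resulting ODEs in $R$, keeping only the solutions that stay bounded) gives
\[
u(\bx) = a_0 + \sum_{k=1}^{\infty} \frac{1}{r^{k}}\left( a_k \cos k\theta + b_k \sin k\theta\right),
\]
in polar coordinates $\bx = (r\cos\theta, r\sin\theta)$, where the absence of a $\log r$ term and of positive powers of $r$ is forced precisely by the decay condition \eqref{eq:GrowthAtInfty} (the $\log r$ and $r^k$ terms are the unbounded homogeneous harmonic modes). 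Actually the condition $u \to 0$ also forces $a_0 = 0$, though this is not needed for the gradient estimate.

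Next I would justify term-by-term differentiation: the series converges uniformly on $\{ r = R\}$ for each $R > R_0$, and more to the point it converges in $r$ geometrically once $r$ is bounded away from $R_0$, so on any circle $\partial B_R(0)$ with $R \geq 2R_0$ (say) the differentiated series converges uniformly and represents $\nabla u$. Each term of $\nabla u$ is a linear combination of $r^{-(k+1)}(\cdots)$ with trigonometric coefficients bounded by $1$, so
\[
|\nabla u(\bx)| \leq \sum_{k=1}^{\infty} \frac{C\,k\,(|a_k| + |b_k|)}{r^{k+1}} \leq \frac{1}{r^{2}} \sum_{k=1}^{\infty} \frac{C\,k\,(|a_k|+|b_k|)}{r^{k-1}}.
\]
For $r \geq 2R_0$ the tail $\sum_k k(|a_k|+|b_k|)\,(2R_0)^{-(k-1)}$ is a fixed convergent number (the coefficients $a_k, b_k$ grow at most like $R_0^{\,k}$ times a constant, by Cauchy-type estimates on $\partial B_{R_0}(0)$, which is comfortably beaten by the geometric factor once $r \geq 2R_0$), so it is bounded by some constant $M$ independent of $R$. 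Taking the supremum over $\partial B_R(0)$ gives $\sup_{\partial B_R(0)} |\nabla u| \leq M/R^2$ for all $R \geq R_0' := 2R_0$, which is the claim.

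The main obstacle, and the only point requiring genuine care rather than bookkeeping, is establishing the exterior multipole expansion itself with the correct exclusion of the $\log r$ mode: one must argue that the decay hypothesis \eqref{eq:GrowthAtInfty} kills not only the $r^k$ modes but also the $\log r$ term and the constant, and that the remaining series genuinely converges (and is differentiable) on the punctured neighborhood of infinity. This is a classical fact about harmonic functions in exterior planar domains; in a self-contained treatment one would get it from the Kelvin transform $v(\by) = u(\by/|\by|^2)$, which is harmonic in a punctured disk around the origin, has a removable singularity at $0$ precisely because $u$ (hence $v$) is bounded near there, and whose Taylor expansion at $0$ transforms back into the desired expansion at infinity — with the bound $|v| \to 0$ ruling out the logarithmic singularity automatically. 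Once the expansion is in hand, everything else is elementary, and since the lemma is cited from \cite{kress1999linear} one may alternatively simply invoke it; but the sketch above is the argument I would write out.
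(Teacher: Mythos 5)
Your argument is correct: the paper does not prove this lemma at all (it simply cites \cite{kress1999linear}), and the Kelvin-transform/multipole-expansion argument you sketch is precisely the standard proof behind that citation --- decay at infinity removes the $\log r$ mode and the constant, and term-by-term differentiation of the resulting series, with coefficient bounds from a fixed circle of radius slightly larger than $R_{0}$, gives $\left|\nabla u\right| = O\left(R^{-2}\right)$ uniformly on $\partial B_{R}\left(0\right)$. The only bookkeeping point worth making explicit is that the Fourier/Cauchy estimates for $a_{k},b_{k}$ should be taken on a circle strictly inside the region of convergence (any radius in $(R_{0}, 2R_{0})$ works), after which the geometric factor at $r \geq 2R_{0}$ justifies the uniform convergence of the differentiated series exactly as you indicate.
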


\begin{lem}
(Uniqueness). Suppose that $u$ satisfies
equations \eqref{eq:Harmonicity}, \eqref{eq:Equipotential}, 
\eqref{eq:ChargeConstraints}
and \eqref{eq:GrowthAtInfty}, with $q_{i}=0$ for $i=1,\dots,N$. 
Then, $u(\bx)\equiv0$ in the exterior domain $E$.
\end{lem}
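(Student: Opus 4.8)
The plan is to use Green's identities together with the decay estimate of Lemma \ref{lem:growth}. First I would note that since $u$ is harmonic in $E$ and equals the (unknown) constant $\phi_j$ on each $\Gamma_j$, I can compute the Dirichlet energy $\int_E |\nabla u|^2\, dA$ by applying Green's first identity on the truncated domain $E_R = E \cap B_R(0)$. This gives
\[
\int_{E_R} |\nabla u|^2\, dA = \int_{\partial B_R(0)} u \frac{\partial u}{\partial n}\, ds - \sum_{j=1}^N \int_{\Gamma_j} u \frac{\partial u}{\partial n}\, ds,
\]
where the sign on the $\Gamma_j$ terms reflects the orientation of the normals (the outward normal of $E_R$ on $\Gamma_j$ points into $D_j$). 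On each $\Gamma_j$, $u$ is the constant $\phi_j$, so that term becomes $-\phi_j \int_{\Gamma_j} \frac{\partial u}{\partial n}\, ds = \phi_j q_j = 0$ by the hypothesis that all $q_j = 0$, using the sign convention in \eqref{eq:ChargeConstraints}. (I'd be careful to track the two normal-direction conventions so the charge constraint is applied with the correct sign.)

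Next I would handle the boundary term at infinity. By \eqref{eq:GrowthAtInfty}, $u(\bx) \to 0$, so $|u|$ is bounded on $\partial B_R(0)$ (in fact $\to 0$); by Lemma \ref{lem:growth}, $|\nabla u| \le M/R^2$ on $\partial B_R(0)$ for $R \ge R_0$. Hence
\[
\left| \int_{\partial B_R(0)} u \frac{\partial u}{\partial n}\, ds \right| \le 2\pi R \cdot \sup_{\partial B_R(0)} |u| \cdot \frac{M}{R^2} = \frac{2\pi M}{R}\sup_{\partial B_R(0)} |u| \longrightarrow 0
\]
as $R \to \infty$. Therefore, letting $R \to \infty$, the monotone/dominated convergence of the left side gives $\int_E |\nabla u|^2\, dA = 0$, so $\nabla u \equiv 0$ and $u$ is constant on (each connected component of) $E$; since $E$ is connected and $u \to 0$ at infinity, $u \equiv 0$.

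The main obstacle is bookkeeping rather than anything deep: one must be scrupulous about orientation and sign conventions so that the equipotential boundary condition combined with the zero-charge hypothesis genuinely kills the finite boundary terms, and one must confirm that the decay furnished by Lemma \ref{lem:growth} (together with $u \to 0$) is strong enough to kill the boundary integral at infinity — it is, since the integrand decays like $R^{-2}$ against a perimeter growing like $R$. A secondary point worth a sentence is the justification that $\int_{E_R}|\nabla u|^2 \to \int_E |\nabla u|^2$ as $R \to \infty$, which follows since the integrand is nonnegative and $E_R \uparrow E$.
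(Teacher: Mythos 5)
Your proposal is correct and follows essentially the same argument as the paper: Green's first identity on $E\cap B_R(0)$, the equipotential and zero-charge conditions killing the $\Gamma_j$ terms, and Lemma \ref{lem:growth} plus boundedness of $u$ to kill the boundary term at infinity, giving $\int_E|\nabla u|^2=0$ and hence $u\equiv 0$ by the decay condition. Your explicit $O(1/R)$ estimate of the far boundary integral just spells out a step the paper leaves implicit.
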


\begin{proof}
For sufficiently large $R$, we may write
\[
0=\int_{E\cap B_{R\left(0\right)}}u\Delta u \,dV =\int_{\partial B_{R}\left(0\right)}u 
\frac{\partial u}{\partial n}ds_{\bx} -\sum_{i=1}^{N}\int_{\Gamma_{i}}u
\frac{\partial u}{\partial n}ds_{\bx} -\int_{E\cap B_{R}\left(0\right)}\left|\nabla u\right|^{2}dV.
\]
Since $u(\bx)$ takes on some constant value $\phi_i$ on $\Gamma_i$, we may write
\begin{align*}
\int_{E\cap B_{R}\left(0\right)}\left|\nabla u\right|^{2}dV & =
\int_{\partial B_{R}\left(0\right)}u\frac{\partial u}{\partial n}ds_{\bx} -
\sum_{i=1}^{N}\phi_{i}\int_{\Gamma_{i}}\frac{\partial u}{\partial n}ds_{\bx} \quad \\
 & =\int_{\partial B_{R}\left(0\right)}u\frac{\partial u}{\partial n}ds_{\bx} \, ,
\end{align*}
since the $q_{i}$ are all zero.
From Lemma  \ref{lem:growth}, the boundedness of $u$, and the 
monotone convergence theorem, it is easy to see that
\[
\int_{E}\left|\nabla u\right|^{2}dV=
\lim_{R\to\infty}\int_{E\cap B_{R}\left(0\right)}\left|\nabla u\right|^{2}dV =
\lim_{R\to\infty}\int_{\partial B_{R}\left(0\right)}u\frac{\partial u}{\partial n}ds_{\bx} =0 .
\]
Thus, $\nabla u\equiv\mathbf{0}$ in $E$ and $u$ must be a constant. 
From the decay condition at infinity, $u\equiv0$ as desired.
\end{proof}

To develop an integral equation for the elastance problem, 
we will use the language of scattering theory.
That is, we will construct an ``incident field'' which satisfies the 
charging conditions \eqref{eq:ChargeConstraints} but not the boundary
conditions \eqref{eq:Equipotential}. We will then
solve for a ``scattered'' field which forces the 
conductors to be equipotential surfaces without changing the net charge
on any of the $\Gamma_i$. (This will also
yield a proof of existence of solutions.)

\begin{rem}
In physical terms, one can think of the problem as follows:
imagine that
we simply deposit charge uniformly on each conducting surface $\Gamma_i$
to satisfy the charging condition. 
This will be our incident field.
The charges will then redistribute themselves on
each $\Gamma_i$ so that they are equipotential surfaces. The total
field will be defined by that new, equilibrated charge distribution.
\end{rem}

\subsection{Mathematical preliminaries}

Let $\gamma$ be a smooth closed curve in $\bR^{2}$ and let $D^{\mp}$
denote the domains corresponding to the interior and exterior of 
$\gamma$. Let $\bn_{\bx}$ be the unit outward normal
to the curve $\gamma$ and let $\bn_{0}=\bn_{\bx_0}$ for
$\bx_{0}\in\gamma$ . Let $\mu:\gamma\to\bR$ be a continuous
function. The single layer potential is defined by
\begin{equation}
S_{\gamma}\mu\left(\mathbf{x}\right)=\int_{\gamma}
G\left(\mathbf{x},\mathbf{y}\right)\mu\left(\mathbf{y}\right)ds_{\by} \, ,
\end{equation}
where $G(\bx,\by)$ is the 
fundamental solution for the Laplace equation in free space:
\begin{equation}
G\left(\mathbf{x},\mathbf{y}\right)=-\frac{1}{2\pi}\log\left|\mathbf{x}-\mathbf{y}\right| \, .
\end{equation}

\begin{lem} 
{\rm \cite{guenther1988partial,kress1999linear,mikhlin1964integral}} 
Let $S_{\gamma}\mu\left(\mathbf{x}\right)$ be a single layer potential
with charge density $\mu$ defined on $\gamma$. Then
$S_{\gamma}\mu\left(\mathbf{x}\right)$ is harmonic
in $\bR^{2}\backslash\gamma$ and continuous
in $\bR^{2}$. The single layer potential satisfies
the jump relations
\begin{equation}
\lim_{\substack{\mathbf{x}\to\mathbf{x}_{0}\\ \mathbf{x}\in D^{\pm}}} 
\frac{\partial S_{\gamma}\mu\left(\mathbf{x}\right)}{\partial n_{0,\pm}}=
\mp\frac{1}{2}\mu\left(\mathbf{x}_{0}\right)+
\oint_{\gamma} \frac{\partial G\left(\mathbf{x}_{0},\mathbf{y}\right)}{\partial n_{0}}
\mu\left(\mathbf{y}\right)ds_{\by} \label{SLnormaljump}
\end{equation}
where $\oint_{\gamma}$ indicates the principal value integral
over the curve $\gamma$ and
the subscripts $-$ and $+$ denote the limits of the integral from
the interior and exterior side, respectively. Furthermore,
\begin{equation}
-\int_{\gamma}\frac{\partial S_{\gamma}\mu\left(\mathbf{x}\right)}{\partial n_{\bx,+}}ds_{\bx}
 =\int_{\gamma}\mu\left(\bx\right)ds_{\bx}\label{eq:SLGaussLaw1}\, ,\quad
\int_{\gamma}\frac{\partial S_{\gamma}\mu\left(\mathbf{x}\right)}{\partial n_{\bx,-}}ds_{\bx} =0.
\end{equation}
For a closed curve 
$\omega\subset D^{+}$, we also have
\begin{equation}
\int_{\omega}\frac{\partial S_{\gamma}\mu\left(\mathbf{x}\right)}{\partial n_{\bx}}ds_{\bx}
=0 . \label{eq:SLGaussLaw2}
\end{equation}
Finally,
\begin{equation}
\left|S_{\gamma}\mu\left(\mathbf{x}\right)+\frac{1}{2\pi} Q
\log{\left(\mathbf{x}\right)}\right|\to0\quad\mbox{as }\left|\mathbf{x}
\right|\to\infty , \label{eq:SLGrowthAtInfty}
\end{equation}
where 
\[ Q = \int_{\gamma} \mu\left(\mathbf{y}\right)ds_{\by} . \]
\end{lem}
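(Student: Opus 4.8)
The plan is to take the five assertions in increasing order of distance from standard potential theory. Harmonicity of $S_{\gamma}\mu$ in $\bR^{2}\setminus\gamma$, its continuity in all of $\bR^{2}$, and the normal-derivative jump relation \eqref{SLnormaljump} are the classical mapping properties of the Laplace single layer potential; for these I would record only the one-line reason and cite \cite{guenther1988partial,kress1999linear,mikhlin1964integral}. The Gauss-law identities \eqref{eq:SLGaussLaw1}--\eqref{eq:SLGaussLaw2} and the far-field expansion \eqref{eq:SLGrowthAtInfty} then follow from those facts together with the divergence theorem and a Taylor expansion of the logarithm, and those are the steps I would actually write out.

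Harmonicity: $G(\cdot,\by)$ is harmonic on $\bR^{2}\setminus\{\by\}$, and for $\bx$ in any compact subset of $\bR^{2}\setminus\gamma$ the kernel and its $\bx$-derivatives are bounded uniformly in $\by\in\gamma$, so differentiation under the integral sign gives $\Delta_{\bx}S_{\gamma}\mu(\bx)=\int_{\gamma}\Delta_{\bx}G(\bx,\by)\mu(\by)\,ds_{\by}=0$. Continuity of $S_{\gamma}\mu$ across $\gamma$ is the standard consequence of the weak (logarithmic) singularity of $G$ and the continuity of $\mu$ on the compact curve $\gamma$. For \eqref{SLnormaljump} one writes, for $\bx\notin\gamma$, $\partial S_{\gamma}\mu(\bx)/\partial n_{0}=\int_{\gamma}\bn_{0}\cdot\nabla_{\bx}G(\bx,\by)\,\mu(\by)\,ds_{\by}$, splits $\mu(\by)=\mu(\bx_{0})+(\mu(\by)-\mu(\bx_{0}))$; because $\gamma$ is smooth, the remainder term has an integrable kernel and the limit $\bx\to\bx_{0}$ may be taken inside the integral, while the constant term is an adjoint-double-layer contribution whose one-sided boundary limits differ from its principal value by $\mp\frac{1}{2}\mu(\bx_{0})$ by Gauss's integral formula. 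This is exactly the classical jump relation, and I would defer its detailed proof to the cited references.

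Given those facts, the interior limit in \eqref{eq:SLGaussLaw1} and the identity \eqref{eq:SLGaussLaw2} are immediate: $S_{\gamma}\mu$ is harmonic in the bounded domain $D^{-}$ and, by \eqref{SLnormaljump}, its gradient extends continuously to $\gamma$ from inside, so the divergence theorem gives $\int_{\gamma}\partial S_{\gamma}\mu(\bx)/\partial n_{\bx,-}\,ds_{\bx}=\int_{D^{-}}\Delta S_{\gamma}\mu\,dV=0$; the same argument in the region bounded by a closed curve $\omega\subset D^{+}$, on which $S_{\gamma}\mu$ is harmonic, gives \eqref{eq:SLGaussLaw2}. For the exterior flux in \eqref{eq:SLGaussLaw1} I would apply the divergence theorem on the annular region $D^{+}\cap B_{R}(0)$ bounded by $\gamma$ and $\partial B_{R}(0)$, obtaining $\int_{\gamma}\partial S_{\gamma}\mu(\bx)/\partial n_{\bx,+}\,ds_{\bx}=\int_{\partial B_{R}(0)}\partial S_{\gamma}\mu/\partial n\,ds$. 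Differentiating the definition, $\nabla_{\bx}S_{\gamma}\mu(\bx)=-\frac{1}{2\pi}\,\frac{\bx}{|\bx|^{2}}\,Q+O(1/|\bx|^{2})$ with $Q=\int_{\gamma}\mu\,ds_{\by}$, so the outer flux tends to $-Q$ as $R\to\infty$, and hence $-\int_{\gamma}\partial S_{\gamma}\mu(\bx)/\partial n_{\bx,+}\,ds_{\bx}=Q=\int_{\gamma}\mu\,ds_{\bx}$.

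Finally, for \eqref{eq:SLGrowthAtInfty} write $\log|\bx-\by|=\log|\bx|+\frac{1}{2}\log\!\bigl(1-2\,\bx\cdot\by/|\bx|^{2}+|\by|^{2}/|\bx|^{2}\bigr)$ and note that, since $\gamma$ is compact, the second term is $O(1/|\bx|)$ uniformly in $\by\in\gamma$; hence $S_{\gamma}\mu(\bx)+\frac{1}{2\pi}Q\log|\bx|=-\frac{1}{2\pi}\int_{\gamma}\bigl(\log|\bx-\by|-\log|\bx|\bigr)\mu(\by)\,ds_{\by}=O(1/|\bx|)\to0$. The only genuinely delicate point in the whole lemma is the rigorous justification of \eqref{SLnormaljump} -- interchanging the limit $\bx\to\bx_{0}$ with the integral -- which rests on the weak-singularity estimates for $\bn_{0}\cdot\nabla_{\bx}G$; this is precisely the step I would cite rather than reproduce, and everything else is an application of the divergence theorem and a Taylor expansion.
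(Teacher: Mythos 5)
Your proposal is correct; the paper gives no proof of this lemma at all, simply citing the classical potential-theory references, and your sketch (deferring harmonicity, continuity, and the jump relation \eqref{SLnormaljump} to those references while deriving the Gauss-law identities and the far-field behavior via the divergence theorem and an expansion of the logarithm) is the standard and sound way to fill it in. The only point worth flagging is that in \eqref{eq:SLGaussLaw2} your divergence-theorem step implicitly requires the region bounded by $\omega$ to lie in $D^{+}$ (so that $S_{\gamma}\mu$ is harmonic throughout it), i.e.\ $\omega$ must not encircle $\gamma$ --- otherwise the flux would be $-Q$ rather than $0$ --- and this is indeed the intended reading and the way the identity is used later in the paper.
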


The double layer potential $D_{\gamma}\mu(\bx)$ 
is the potential due to a surface density of dipole sources on $\gamma$,
aligned in the normal direction to the curve:
\begin{equation}
D_{\gamma}\mu\left(\mathbf{x}\right)=\int_{\gamma}\frac{\partial G\left(\mathbf{x},\mathbf{y}\right)}{\partial n_{\by}}\mu\left(\mathbf{y}\right)ds_{\by}
\end{equation}
\begin{lem}
\cite{guenther1988partial,kress1999linear,mikhlin1964integral} 
Let $D_{\gamma}\mu\left(\mathbf{x}\right)$ be a double layer potential.
Then,
$D_{\gamma}\mu\left(\mathbf{x}\right)$ is harmonic in $\bR^{2}\backslash\gamma$
and satisfies the jump relations:
\begin{equation}
\lim_{\substack{\mathbf{x}\to\mathbf{x}_{0}\\
\mathbf{x}\in D^{\pm}}} D_{\gamma}\mu=\pm\frac{1}{2}\mu\left(\mathbf{x}_{0}\right)+
\oint_{\gamma}\frac{\partial G\left(\mathbf{x}_{0},\mathbf{y}\right)}{\partial n_{\by}}
\mu\left(\mathbf{y}\right)ds_{\by}\label{eq:DLP0} \, .
\end{equation}
Furthermore,
\begin{align}
\int_{\gamma}\frac{\partial G\left(\mathbf{x},\mathbf{y}\right)}{\partial n_{\by}}ds_{\by} &=
\begin{cases}
-1 & \quad\mathbf{x}\in D^{-}\\
0 & \quad\mathbf{x}\in D^{+}
\end{cases}\label{eq:DLP1}\\
\oint_{\gamma}\frac{\partial G\left(\mathbf{x},\mathbf{y}\right)}{\partial n_{\by}}
ds_{\by} &=-\frac{1}{2}\quad\mathbf{x}\in\gamma \, , \label{eq:DLP2}
\end{align}
and
\begin{equation}
\left|D_{\gamma}\mu\left(\mathbf{x}\right)\right|\to0\quad\mbox{as }\left|\mathbf{x}\right|\to\infty \, . \label{eq:DLGrowthAtInfty}
\end{equation}
\end{lem}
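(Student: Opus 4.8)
The plan is to establish the four assertions in order of increasing difficulty, using only Green's identities and the explicit form of the kernel $\frac{\partial G(\bx,\by)}{\partial n_{\by}}=-\frac{1}{2\pi}\frac{(\by-\bx)\cdot\bn_{\by}}{|\bx-\by|^{2}}$. \textbf{Harmonicity and decay} are immediate: for fixed $\bx\notin\gamma$ the kernel is a smooth function of $\bx$ on a neighborhood of $\bx$ and, for each $\by$, $\Delta_{\bx}G(\bx,\by)=0$; since $\gamma$ is compact we may differentiate under the integral sign, giving $\Delta D_{\gamma}\mu\equiv 0$ on $\bR^{2}\setminus\gamma$. For \eqref{eq:DLGrowthAtInfty}, the kernel is $O(|\bx|^{-1})$ uniformly for $\by$ in the bounded set $\gamma$, so $|D_{\gamma}\mu(\bx)|\le C|\bx|^{-1}\int_{\gamma}|\mu|\,ds_{\by}\to 0$.

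Next I would prove the Gauss-type identities. Identity \eqref{eq:DLP1} follows from Green's representation formula applied to the constant function $u\equiv 1$: for $\bx\in D^{-}$, $u(\bx)=\int_{\gamma}\bigl(G\,\partial_{n_{\by}}u-u\,\partial_{n_{\by}}G\bigr)ds_{\by}$ reduces to $1=-\int_{\gamma}\frac{\partial G(\bx,\by)}{\partial n_{\by}}ds_{\by}$, while for $\bx\in D^{+}$ the function $G(\bx,\cdot)$ is harmonic throughout $D^{-}$, so Green's second identity with $u\equiv 1$ gives $0=\int_{\gamma}\frac{\partial G(\bx,\by)}{\partial n_{\by}}ds_{\by}$. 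For \eqref{eq:DLP2} I would take $\bx_{0}\in\gamma$, excise the disk $B_{\epsilon}(\bx_{0})$, and apply the $D^{+}$-type computation on $D^{-}\setminus B_{\epsilon}(\bx_{0})$; by smoothness of $\gamma$ the portion of $\partial B_{\epsilon}(\bx_{0})$ lying in $D^{-}$ is a near-semicircle of length $\to\pi\epsilon$ on which the kernel equals exactly $\frac{1}{2\pi\epsilon}$, so its contribution tends to $\tfrac12$, leaving $\oint_{\gamma}\frac{\partial G(\bx_{0},\by)}{\partial n_{\by}}ds_{\by}=-\tfrac12$ — the average of the interior value $-1$ and the exterior value $0$.

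The substantive step is the jump relation \eqref{eq:DLP0}. I would use the classical subtraction trick, writing $\mu(\by)=\bigl[\mu(\by)-\mu(\bx_{0})\bigr]+\mu(\bx_{0})$, so that
\[ D_{\gamma}\mu(\bx)=\underbrace{\int_{\gamma}\frac{\partial G(\bx,\by)}{\partial n_{\by}}\bigl[\mu(\by)-\mu(\bx_{0})\bigr]ds_{\by}}_{=:w(\bx)}+\mu(\bx_{0})\int_{\gamma}\frac{\partial G(\bx,\by)}{\partial n_{\by}}ds_{\by}. \]
By \eqref{eq:DLP1}--\eqref{eq:DLP2} the second term equals $-\mu(\bx_{0})$, $0$, or $-\tfrac12\mu(\bx_{0})$ according as $\bx\in D^{-}$, $\bx\in D^{+}$, or $\bx=\bx_{0}\in\gamma$. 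Decomposing the principal-value integral in \eqref{eq:DLP0} the same way shows it equals $w(\bx_{0})-\tfrac12\mu(\bx_{0})$. Hence, once one knows that $w$ extends continuously across $\gamma$ at $\bx_{0}$ with a single limiting value $w(\bx_{0})$, the three cases combine to give the interior and exterior limits $w(\bx_{0})-\mu(\bx_{0})$ and $w(\bx_{0})$, which are precisely $\mp\tfrac12\mu(\bx_{0})+\oint_{\gamma}\frac{\partial G(\bx_{0},\by)}{\partial n_{\by}}\mu(\by)ds_{\by}$ as claimed.

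The main obstacle is exactly this continuity of $w$ up to the curve. The key geometric input is that for a $C^{2}$ curve one has $|(\bx-\by)\cdot\bn_{\by}|\le C|\bx-\by|^{2}$ for $\bx,\by\in\gamma$ close together, and more generally $|\frac{\partial G(\bx,\by)}{\partial n_{\by}}|\le C/|\bx-\by|$ for $\bx$ near $\bx_{0}$; combining this with a Hölder modulus of continuity for $\mu$ (the fully general continuous case needs a little more care, following Mikhlin) makes the integrand defining $w$ dominated near $\bx_{0}$ by an integrable bound uniform in $\bx$, after which a standard splitting of $\gamma$ into a short arc around $\bx_{0}$ and its complement yields the continuity. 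This is the one genuinely potential-theoretic estimate; everything else is bookkeeping with Green's identities, and in the interest of brevity one may simply invoke \cite{guenther1988partial,kress1999linear,mikhlin1964integral} for it, as the paper does.
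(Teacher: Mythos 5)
The paper itself offers no proof of this lemma: it is stated as classical background and attributed to \cite{guenther1988partial,kress1999linear,mikhlin1964integral}, so there is no internal argument to compare against. Your outline is the standard textbook proof and is essentially correct: harmonicity and the decay \eqref{eq:DLGrowthAtInfty} by differentiating under the integral and the $O(|\mathbf{x}|^{-1})$ kernel bound; the Gauss identities \eqref{eq:DLP1} from Green's identities applied to $u\equiv 1$; \eqref{eq:DLP2} by excising a small disk about $\mathbf{x}_{0}$ and picking up the half-circle contribution $\tfrac12$; and the jump relation \eqref{eq:DLP0} by the subtraction trick, reducing everything to the continuity across $\gamma$ of $w(\mathbf{x})=\int_{\gamma}\partial_{n_{\by}}G(\mathbf{x},\mathbf{y})\left[\mu(\mathbf{y})-\mu(\mathbf{x}_{0})\right]ds_{\by}$, with the signs in all three cases checked correctly against the paper's convention ($D^{-}$ interior, $D^{+}$ exterior). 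The one place where your sketch is slightly glib is exactly the step you flag: the pointwise bound $|\partial_{n_{\by}}G(\mathbf{x},\mathbf{y})|\le C/|\mathbf{x}-\mathbf{y}|$ for $\mathbf{x}$ off the curve does not by itself furnish a dominating function uniform in $\mathbf{x}$ (it allows a logarithmic divergence in $\mathrm{dist}(\mathbf{x},\gamma)$); what is actually needed is the $C^{2}$-curve estimate giving $\int_{\gamma\cap B_{\delta}(\mathbf{x}_{0})}|\partial_{n_{\by}}G(\mathbf{x},\mathbf{y})|\,ds_{\by}\le C$ uniformly for $\mathbf{x}$ near $\mathbf{x}_{0}$, which is the refined version of your $|(\mathbf{x}-\mathbf{y})\cdot\mathbf{n}_{\by}|\le C|\mathbf{x}-\mathbf{y}|^{2}$ valid off the curve. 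Since you explicitly identify this as the genuinely potential-theoretic estimate and defer it to the same references the paper cites, the proposal is acceptable as it stands; filling in that uniform bound would make it self-contained.
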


\subsection{Charging the boundaries with an incident field \label{sec:incelast}}

In the elastance problem, perhaps the 
simplest way to allocate the net charge $q_i$ to the boundary $\Gamma_i$ is 
to define a constant charge density 
\begin{equation} 
\sigma_i(\bx) = \frac{q_i}{|\Gamma_i|}  \, ,
\label{eq:sigma}
\end{equation}
for $\bx$ on the curve $\Gamma_i$, where $|\Gamma_i|$ denotes its length.
We can then define
$\sigma\left(\mathbf{x}\right)=\left(\sigma_{1}\left(\mathbf{x}\right),\sigma_{2}\left(\mathbf{x}\right)\ldots\sigma_{N}\left(\mathbf{x}\right)\right)$
and
\begin{align}
u_{inc}\left(\mathbf{x}\right) & ={S}_{\Gamma}\mathbf{\sigma}\left(\mathbf{x}\right)\label{eq:IncRepresentation}
\end{align}
where ${S}_{\Gamma}$
is the operator given by
\begin{align*}
{S}_{\Gamma}\sigma\left(\mathbf{x}\right) & =\sum_{j=1}^{N}S_{\Gamma_{j}}\sigma_{j}\left(\mathbf{x}\right)=\sum_{j=1}^{N}\int_{\Gamma_{j}}G\left(\mathbf{x},\mathbf{y}\right)\sigma_{j}\left(\mathbf{y}\right)ds_{\by} \, .
\end{align*}
From \eqref{eq:SLGaussLaw1} and \eqref{eq:SLGaussLaw2}, we have
\begin{equation}
-\int_{\Gamma_{j}}\frac{\partial u_{inc}\left(\mathbf{x}\right)}{\partial n}ds_{\bx} 
 =\int_{\Gamma_{j}}\sigma_{j}\left(\mathbf{x}\right)ds_{\bx}
= \frac{q_{j}}{|\Gamma_j|} 
 \int_{\Gamma_{j}} \, ds_{\bx} = q_j\, ,
\label{eq:ChargeImposition}
\end{equation}
for $j=1,2,\dots,N$.
Thus, $u_{inc}$ 
satisfies the charge constraints \eqref{eq:ChargeConstraints}.

\subsection{The scattered field}

We now seek 
a scattered field 
\begin{align}
u_{sc}\left(\mathbf{x}\right) & ={S}_{\Gamma}\mu\left(\mathbf{x}\right)
\label{eq:ScRepresenation}
\end{align}
such that
$u(\bx) = u_{inc}(\bx) + u_{sc}(\bx)$,
where $\mu(\bx) = (\mu_1(\bx), \mu_2(\bx),\dots,\mu_N(\bx))$
with $\mu_j$ an unknown charge density on
the boundaries $\Gamma_{j}$.
To ensure that no additional net charge has been introduced on any
of the conductors, we impose the $N$ integral constraints on
$\mu\left(\mathbf{x}\right)$:
\begin{align*}
\int_{\Gamma_{j}}\mu_{j}\left(\mathbf{x}\right)ds_{\bx} & =0 \, .
\end{align*}
If we can find such functions $\mu_j(\bx)$, then
\begin{align}
u(\bx) & =u_{inc}(\bx)+u_{sc}(\bx)=
S_{\Gamma}(\mu+\sigma)(\bx) \, , \label{eq:TotRepresentation}
\end{align}
solves the elastance problem.
Physically, $(\mu_j + \sigma_j)(\bx)$ is the final
charge density on $\Gamma_j$ once the total charge place on the
boundary has equilibrated to 
enforce the perfect conductor boundary condition \eqref{eq:Equipotential}.

\subsection{Formulation as a Neumann problem}

Letting $u(\bx) = u_{inc}(\bx) + u_{sc}(\bx)$, note first that
the corresponding potential is also defined inside each 
conductor. Rather than imposing the boundary condition 
\eqref{eq:Equipotential} from the exterior, however, 
we can make use of the fact that the
electric field inside each conductor given by $\nabla u\left(\mathbf{x}\right)$
must be identically zero. Thus, we may impose 
the interior Neumann boundary conditions
\begin{equation}
\frac{\partial u}{\partial n_{-}}\left(\mathbf{x}\right)\equiv0
\end{equation}
for $\mathbf{x}\in\Gamma$. 
Using \eqref{SLnormaljump},
we obtain the following second kind integral equation:
\begin{equation}
\left(\frac{1}{2}I+K\right)\mu\left(\mathbf{x}\right) = - \left(\frac{1}{2}I+K\right)\sigma\left(\mathbf{x}\right)
\label{eq:IntegralEq}
\end{equation}
for $\bx\in\Gamma$, subject to the constraints
\begin{equation}
\int_{\Gamma_{j}}\mu_{j}\left(\mathbf{x}\right) ds_{\bx}=0,
\label{eq:IntEqConstraints}
\end{equation}
for $j=1,2,\ldots N$.
Here,
\begin{align}
I & =\left(\begin{array}{cccc}
I_{1}\\
 & I_{2}\\
 &  & \ddots\\
 &  &  & I_{N}
\end{array}\right) \, , \\
K & =\left(\begin{array}{cccc}
K_{1,1} & K_{1,2} & \ldots & K_{1,N}\\
K_{2,1} & K_{2,2} & \ldots & K_{2,N}\\
\vdots & \vdots & \ddots & \vdots\\
K_{N,1} & K_{N,2} &  & K_{N,N}
\end{array}\right) \, ,
\end{align}
$I_{i}: C^{0,\alpha}\left(\Gamma_{i}\right)\to C^{0,\alpha}\left(\Gamma_{i}\right)$ is the identity map, 
$K_{i,j}:C^{0,\alpha}\left(\Gamma_{j}\right)\to C^{0,\alpha}\left(\Gamma_{i} \right)$, and 
$K_{i,i}:C^{0,\alpha}\left(\Gamma_{i}\right)\to C^{0,\alpha}\left(\Gamma_{i} \right)$
are the operators given by

\begin{align}
K_{i,j}\sigma & =\int_{\Gamma_{j}}\frac{\partial G\left(\mathbf{x},\mathbf{y}\right)}{\partial n_{\bx}}\sigma\left(\mathbf{y}\right)ds_{\by} 
\quad \mathbf{x} \in \Gamma_{i}\\
K_{i,i}\sigma & =\oint_{\Gamma_{i}}\frac{\partial G\left(\mathbf{x},\mathbf{y}\right)}{\partial n_{\bx}}\sigma\left(\mathbf{y}\right)ds_{\by} 
\quad \mathbf{x} \in \Gamma_{i} \, ,
\end{align}
where $C^{0,\alpha} \left(\Gamma \right)$ is the H\"{o}lder space
with exponent $\alpha$ and $\alpha > 0$. For a related treatment of the capacitance problem, see
\cite{Tausch1998}.

\begin{thm}
Let $u\left(\mathbf{x}\right)$ be defined as in \eqref{eq:TotRepresentation}.
If $\mu\left(\mathbf{x}\right)$ solves equations \eqref{eq:IntegralEq}
and \eqref{eq:IntEqConstraints}, then $u\left(\mathbf{x}\right)$ solves
the elastance problem.
\end{thm}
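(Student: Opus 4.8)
The plan is to check, one condition at a time, that $u(\bx)=S_{\Gamma}(\mu+\sigma)(\bx)$ satisfies each of the four defining properties \eqref{eq:Harmonicity}--\eqref{eq:GrowthAtInfty} of the elastance problem (under the standing charge-neutrality hypothesis \eqref{eq:ChargeConservation}). Properties \eqref{eq:Harmonicity}, \eqref{eq:ChargeConstraints} and \eqref{eq:GrowthAtInfty} will reduce to bookkeeping with the jump relations and Gauss-law identities recorded in the preliminaries; the equipotential condition \eqref{eq:Equipotential} is the one step with real content, and it is where the integral equation \eqref{eq:IntegralEq} actually gets used.

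First, \eqref{eq:Harmonicity} is immediate, since $u$ is a finite sum of single layer potentials supported on $\Gamma$ and hence is harmonic in $\bR^2\setminus\Gamma\supset E$. For \eqref{eq:ChargeConstraints}, fix $j$ and split $u=S_{\Gamma_j}(\mu_j+\sigma_j)+\sum_{k\neq j}S_{\Gamma_k}(\mu_k+\sigma_k)$. For the diagonal piece, \eqref{eq:SLGaussLaw1} gives $-\int_{\Gamma_j}\tfrac{\partial}{\partial n}S_{\Gamma_j}(\mu_j+\sigma_j)\,ds=\int_{\Gamma_j}(\mu_j+\sigma_j)\,ds$; for each $k\neq j$, the potential $S_{\Gamma_k}(\mu_k+\sigma_k)$ is harmonic in a neighborhood of $\overline{D_j}$ and $\Gamma_j$ is a closed curve contained in the exterior of $\Gamma_k$, so \eqref{eq:SLGaussLaw2} forces its flux through $\Gamma_j$ to vanish. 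Combining this with the constraint \eqref{eq:IntEqConstraints} (which makes $\int_{\Gamma_j}\mu_j\,ds=0$) and the construction \eqref{eq:ChargeImposition} (which gives $\int_{\Gamma_j}\sigma_j\,ds=q_j$) yields $-\int_{\Gamma_j}\tfrac{\partial u}{\partial n}\,ds=q_j$. Then \eqref{eq:GrowthAtInfty} follows from \eqref{eq:SLGrowthAtInfty}: the net charge carried by $u$ is $Q=\sum_{j}\int_{\Gamma_j}(\mu_j+\sigma_j)\,ds=\sum_j q_j=0$, so $|u(\bx)|\to 0$ as $|\bx|\to\infty$.

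For \eqref{eq:Equipotential}, I would first note that \eqref{eq:IntegralEq} is exactly the identity $\left(\tfrac12 I+K\right)(\mu+\sigma)\equiv 0$ on $\Gamma$, so by the interior case of the single layer jump relation \eqref{SLnormaljump} the interior normal derivative of $u$ vanishes identically on $\Gamma$. Now fix a conductor $D_i$. Since the sources of $u$ all lie on $\Gamma$ and $\Gamma\cap D_i=\emptyset$, $u$ is harmonic in the open set $D_i$; since single layer potentials are globally continuous, $u$ extends continuously to $\overline{D_i}$; and $\tfrac{\partial u}{\partial n_-}\equiv 0$ on $\Gamma_i=\partial D_i$. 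Green's first identity on $D_i$ then gives $\int_{D_i}|\nabla u|^2\,dV=\int_{\Gamma_i}u\,\tfrac{\partial u}{\partial n_-}\,ds=0$, so $\nabla u\equiv\mathbf{0}$ and $u\equiv\phi_i$ for some constant $\phi_i$ throughout $D_i$. Finally, continuity of $S_{\Gamma}(\mu+\sigma)$ across $\Gamma_i$ transfers this value to the exterior trace, that is, $u|_{\Gamma_i}=\phi_i$, which is \eqref{eq:Equipotential}. Having verified all four conditions, $u$ solves the elastance problem.

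I expect the main obstacle to be the conceptual point in the equipotential step: the second-kind equation was manufactured from the \emph{interior} Neumann condition, not from the exterior Dirichlet condition, so the argument must first produce the constancy of $u$ inside each conductor and then push it back out to $\Gamma_i$ by continuity of the single layer potential. The only other points requiring care are the vanishing of the off-diagonal fluxes in the charge computation, which is precisely \eqref{eq:SLGaussLaw2}, and the standard $C^1$-regularity of single layer potentials with H\"older densities up to the boundary, which is what legitimizes the use of Green's identity. Everything else is direct substitution into the identities of the preliminaries.
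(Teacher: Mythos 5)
Your proposal is correct and follows essentially the same route as the paper: harmonicity by construction, the charge constraints via the Gauss-law identities \eqref{eq:SLGaussLaw1}--\eqref{eq:SLGaussLaw2} together with \eqref{eq:IntEqConstraints} and \eqref{eq:ChargeImposition}, decay at infinity from \eqref{eq:ChargeConservation} and \eqref{eq:SLGrowthAtInfty}, and the equipotential condition by deducing $\partial u/\partial n_-\equiv 0$ from \eqref{eq:IntegralEq} and the jump relation, concluding $u$ is constant in each $D_i$, and transferring that value to the exterior trace by continuity of the single layer potential. The only difference is cosmetic: you justify the interior constancy with Green's first identity, where the paper simply invokes the (uniqueness of the) interior Neumann problem.
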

\begin{proof}
Note first that 
$u\left(\mathbf{x}\right)$ is harmonic in $E$ by construction.
Using \eqref{eq:SLGaussLaw1} and \eqref{eq:SLGaussLaw2},
the choice of $\sigma$ in equation \eqref{eq:sigma} and the constraints
\eqref{eq:IntEqConstraints}, we see that $u\left(\mathbf{x}\right)$
satisfies the charge constraints \eqref{eq:ChargeConstraints}.
Furthermore, from equations \eqref{eq:ChargeConservation}
and \eqref{eq:SLGrowthAtInfty}, it follows that $u\left(\mathbf{x}\right)\to0$
as $\left|\mathbf{x}\right|\to\infty$. Since $u\left(\mathbf{x}\right)$ is harmonic
in $D_{i}$ and satisfies $\frac{\partial u}{\partial n_{-}}\equiv0$
on $\Gamma_{i}$, $u\left(\mathbf{x}\right)\equiv c_{i}$ for some constant
$c_i$ inside $D_{i}$.
By the continuity of the single layer potential, $u = c_{i}$
from the exterior side of $\Gamma_{i}$ as well.
\end{proof}

\begin{rem} ({\bf The adjoint operator}):
The operator $K$ in equation \eqref{eq:IntegralEq} is a compact operator
for smooth $\Gamma$. Hence, $\frac{1}{2}I+K$ is a Fredholm integral
equation of the second kind. To
study existence of solutions to $\left(\frac{1}{2}I+K\right)\mu=f$,
we shall study existence of solutions for the adjoint problem
$\left(\frac{1}{2}I+K^{*}\right)\mu=f$ instead, where
\begin{align}
K^{*} & =\left(\begin{array}{cccc}
K_{1,1}^{*} & K_{1,2}^{*} & \ldots & K_{1,N}^{*}\\
K_{2,1}^{*} & K_{2,2}^{*} & \ldots & K_{2,N}^{*}\\
\vdots & \vdots & \ddots & \vdots\\
K_{N,1}^{*} & K_{N,2}^{*} &  & K_{N,N}^{*}
\end{array}\right) \, . \label{eq:LAdjoint}
\end{align}
Here,
$K_{i,j}^{*}:C^{0,\alpha}\left(\Gamma_{j}\right)\to C^{0,\alpha}\left(\Gamma_{i}\right)$ and
$K_{i,i}^{*}:C^{0,\alpha}\left(\Gamma_{i}\right)\to C^{0,\alpha}\left(\Gamma_{i}\right)$
are the operators given by
\begin{align}
K_{i,j}^{*}\sigma & =\int_{\Gamma_{j}}\frac{\partial G\left(\mathbf{x},\mathbf{y}\right)}{\partial n_{\by}}\sigma\left(\mathbf{y}\right)ds_{\by}\label{eq:Kadj0}\\
K_{i,i}^{*}\sigma & =\oint_{\Gamma_{i}}\frac{\partial G\left(\mathbf{x},\mathbf{y}\right)}{\partial n_{\by}}\sigma\left(\mathbf{y}\right)ds_{\by}\label{eq:Kadj1}
\end{align}
It is straightforward to verify that
$K_{i,j}^{*}\sigma(\bx) =D_{\Gamma_{j}}\sigma(\bx)$ for
$\bx \in \Gamma_{i}$.

Let $\sigma=\left\{ \sigma_{i}\left(\mathbf{x}\right)\right\} _{i=1}^{N}$
where each $\sigma_{i}\left(\mathbf{x}\right)$, supported on $\Gamma_{i}$,
is constant. Then, using  \eqref{eq:DLP1},\eqref{eq:DLP2} and \eqref{eq:LAdjoint}, we may conclude that 
$\left(\frac{1}{2}I+K^{*}\right)\sigma=0$.
Thus, the dimension of the null space of $\frac{1}{2}I+K^{*}$ is at
least $N$. In fact, it is well-known that the dimension of the null space 
is exactly $N$ \cite{mikhlin1964integral, kress1999linear}. 
\end{rem}

\begin{rem}
$\frac{1}{2}I+K^{*}$ is the integral operator one would obtain in
seeking to impose
Dirichlet boundary conditions with the potential represented as
a double layer potential. The double layer potential operator for
the exterior, however, is range deficient. It cannot represent 
a harmonic function
$u\left(\mathbf{x}\right)$ in the exterior which is generated 
by net charge 
in any of the domains $D_i$. 
To see this, note that the net charge is 
$-\int_{\Gamma_{i}}\frac{\partial u}{\partial n}$ from
\eqref{eq:SLGaussLaw1}, but that 
the double layer potentials satisfies 
$\int_{\Gamma_{i}}\frac{\partial u}{\partial n}=0$
for $i=1,2,\ldots N$ from \eqref{eq:DLP1}.
\end{rem}

\subsection{Existence of solutions \label{sec:extelast}}
From the preceding discussion (the existence of a nontrivial nulllspace),
it follows from the Fredholm alternative that 
$\left(\frac{1}{2}I+K\right)\mu=f$
has an $N$ dimensional space of solutions, so long as $f$ is in the
range of the operator $\frac{1}{2}I+K$. Using our representation for
the elastance problem, the right hand side in equation
\eqref{eq:IntegralEq} is certainly in the range of the operator $\frac{1}{2}I+K$.
The role of the additional $N$ integral constraints is, therefore, to 
pick out the unique one which doesn't alter the net charge on the $N$ conductors.
However, we do not wish to solve an overdetermined (non-square)
linear system.
If we simply discretize
the integral equation using, say a Nystr\"{o}m method, with $M$ points
on $\Gamma$, we would have to solve an $(M+N)\times M$ linear system to 
obtain the desired solution.
Instead, we propose to solve the integral equation
\[
\frac{1}{2}\mu_{i}(\bx)+\sum_{j=1}^{N}
K_{i,j}\mu_{j}(\bx)+\int_{\Gamma_{i}}\mu_{i}(\bx) ds_{\bx} 
= - \frac{1}{2}\sigma_{i}\left(\mathbf{x}\right)-\sum_{j=1}^{N}
K_{i,j}\sigma_{j}(\bx)
\]
for $\bx\in\Gamma_{i}$, or
\begin{equation}
\left(\frac{1}{2}I+K+L\right)\mu  = 
-\left(\frac{1}{2}I+K\right)\sigma\label{eq:ModRep2}
\end{equation}
where
\begin{align}
L & =\left(\begin{array}{cccc}
L_{1}\\
 & L_{2}\\
 &  & \ddots\\
 &  &  & L_{N}
\end{array}\right) \, ,
\end{align}
with $L_{i}:C^{0,\alpha}\left(\Gamma_{i}\right)\to C^{0,\alpha}\left(\Gamma_{i}\right)$ defined
by $L_{i}\mu_{i}(\mathbf{x})=\int_{\Gamma_{i}}\mu_{i}(\mathbf{y})ds_{\by}$. 

The following lemma shows that solving 
\eqref{eq:ModRep2}
is equivalent to solving \eqref{eq:IntegralEq} 
with constraints \eqref{eq:IntEqConstraints}. 
\begin{lem}
If $\mu$ solves equation \eqref{eq:ModRep2}, then $\mu$ solves equations
\eqref{eq:IntegralEq} and \eqref{eq:IntEqConstraints} \label{Lem:equivalence}
\end{lem}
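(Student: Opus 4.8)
The plan is to show that every solution $\mu$ of \eqref{eq:ModRep2} automatically obeys the $N$ side constraints \eqref{eq:IntEqConstraints}; once this is established, the operator $L$ annihilates $\mu$ by its very definition (since $L_i\mu_i(\bx)=\int_{\Gamma_i}\mu_i\,ds_{\by}=0$), so \eqref{eq:ModRep2} collapses to $(\tfrac12 I+K)\mu = -(\tfrac12 I+K)\sigma$, which is precisely \eqref{eq:IntegralEq}. Thus the entire content of the lemma is the claim $\int_{\Gamma_k}\mu_k\,ds_{\bx}=0$ for $k=1,\dots,N$.

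To recover these constraints I would test \eqref{eq:ModRep2} against the vector $e^{(k)}$ that equals $1$ on $\Gamma_k$ and $0$ on the remaining components, using the pairing $\langle f,g\rangle=\sum_i\int_{\Gamma_i}f_i g_i\,ds$. Two ingredients then do all the work. First, $K^{*}$ as written in \eqref{eq:LAdjoint}--\eqref{eq:Kadj1} is genuinely the adjoint of $K$ for this pairing: $K^{*}_{i,j}$ is the transpose of $K_{j,i}$, a fact that follows from the symmetry $G(\bx,\by)=G(\by,\bx)$ together with a relabeling of the integration variables. Second, the remark preceding \eqref{eq:ModRep2} already showed that piecewise-constant densities lie in the null space of $\tfrac12 I+K^{*}$, so in particular $(\tfrac12 I+K^{*})e^{(k)}=0$. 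Consequently $\langle(\tfrac12 I+K)\mu,e^{(k)}\rangle=\langle\mu,(\tfrac12 I+K^{*})e^{(k)}\rangle=0$ and, identically, $\langle(\tfrac12 I+K)\sigma,e^{(k)}\rangle=0$. The only term of \eqref{eq:ModRep2} that survives the pairing is the one coming from $L$, and a direct calculation gives $\langle L\mu,e^{(k)}\rangle=\int_{\Gamma_k}\bigl(\int_{\Gamma_k}\mu_k(\by)\,ds_{\by}\bigr)ds_{\bx}=|\Gamma_k|\int_{\Gamma_k}\mu_k\,ds$. Hence $|\Gamma_k|\int_{\Gamma_k}\mu_k\,ds=0$, and since $|\Gamma_k|>0$ this yields the $k$th constraint. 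Running $k$ over $1,\dots,N$ finishes the argument.

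An equivalent, more hands-on route avoids the abstract adjoint altogether: integrate the $k$th block of \eqref{eq:ModRep2} directly over $\Gamma_k$ and evaluate the iterated integrals by Fubini. The off-diagonal pieces $\int_{\Gamma_k}K_{k,j}(\cdot)\,ds_{\bx}$ with $j\neq k$ vanish because $\int_{\Gamma_k}\partial G(\bx,\by)/\partial n_{\bx}\,ds_{\bx}=0$ for $\by$ outside $\overline{D_k}$ (Gauss's law, i.e. \eqref{eq:DLP1} with the roles of $\bx$ and $\by$ interchanged), while the diagonal piece contributes $-\tfrac12\int_{\Gamma_k}(\cdot)$ by \eqref{eq:DLP2}. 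The $\tfrac12$ from the identity and the $-\tfrac12$ from the diagonal kernel cancel on both sides, leaving $|\Gamma_k|\int_{\Gamma_k}\mu_k\,ds=0$ as before. There is no real obstacle in either version; the one place demanding care is the bookkeeping that identifies $K^{*}$ with the transpose of $K$ (equivalently, that $e^{(k)}$ annihilates the $k$th block of $\tfrac12 I+K$), which is exactly the null-space computation carried out in the remark preceding \eqref{eq:ModRep2}.
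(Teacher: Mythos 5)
Your proposal is correct and, in its second ``hands-on'' form, is essentially the paper's own proof: integrate the $k$th block of \eqref{eq:ModRep2} over $\Gamma_k$, kill the off-diagonal terms by Gauss's law, pick up $-\tfrac12\int_{\Gamma_k}$ from the diagonal term via \eqref{eq:DLP2} so that it cancels the identity contribution on both sides, and conclude $|\Gamma_k|\int_{\Gamma_k}\mu_k\,ds=0$, after which $L\mu=0$ reduces \eqref{eq:ModRep2} to \eqref{eq:IntegralEq}. Your first route, pairing against the indicator densities $e^{(k)}$ and invoking $(\tfrac12 I+K^{*})e^{(k)}=0$, is just the adjoint-language repackaging of the same computation.
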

\begin{proof}
Using equation \eqref{eq:SLGaussLaw2}, we observe that $\int_{\Gamma_{i}}K_{i,j}\mu_{j}\left(\mathbf{x}\right)ds_{\bx}=0$
for $j\neq i$. Furthermore, switching the order of integration in
$\oint_{\Gamma_{i}}K_{i,i}\mu_{i}\left(\mathbf{x}\right)ds_{\bx}$ and
using property \eqref{eq:DLP2} of the double layer potential, we see
that

\begin{align}
\oint_{\Gamma_{i}}K_{i,i}\mu_{i}\left(\mathbf{x}\right)ds_{\bx} & =\int_{\Gamma_{i}}\oint_{\Gamma_{i}}\frac{\partial G \left(\mathbf{x},\mathbf{y}\right)}{\partial n_{\bx}}\mu_{i}\left(\mathbf{y}\right)ds_{\by}ds_{\bx}\\
 & =\int_{\Gamma_{i}}\mu_{i}\left(\mathbf{y}\right)\oint_{\Gamma_{i}}\frac{\partial G\left(\mathbf{x},\mathbf{y}\right)}{\partial n_{\bx}}ds_{\bx}ds_{\by}\\
 & =-\frac{1}{2}\int_{\Gamma_{i}}\mu_{i}\left(\mathbf{y}\right)ds_{\by} \, .
\end{align}
Integrating expression \eqref{eq:ModRep2} on $\Gamma_{i}$, we may
conclude that
\begin{equation}
\left|\Gamma_{i} \right| \int_{\Gamma_{i}}\mu_{i}\left(\mathbf{x}\right)ds_{\bx}=0 \, .
\end{equation}
Thus, $L_{i}\mu_{i}(\mathbf{x})=0$, which implies that
$\mu$ satisfies the integral constraints \eqref{eq:IntEqConstraints}
and that 
\begin{equation}
\left(\frac{1}{2}I+K+L\right)\mu=\left(\frac{1}{2}I+K\right)\mu=-\left(\frac{1}{2}I+K\right)\sigma \, .
\end{equation}
\end{proof}
\begin{rem}
For further discussion of the solution of consistent linear systems with constraints
in the finite dimensional case, see \cite{NullSpacePaper}.
\end{rem}

The following lemma shows that the operator $\frac{1}{2}I+K+L$ has no null space.
\begin{lem}
 The operator $\frac{1}{2}I+K+L$ is injective.
\end{lem}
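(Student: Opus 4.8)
The plan is to show that $\left(\tfrac{1}{2}I+K+L\right)\mu = 0$ forces $\mu\equiv 0$, by reducing the homogeneous equation to the uniqueness lemma for the elastance problem established above. The first step is to dispose of the $L$ block: integrating $\left(\tfrac{1}{2}I+K+L\right)\mu=0$ over $\Gamma_i$, exactly as in the proof of Lemma \ref{Lem:equivalence}, the off-diagonal terms $\int_{\Gamma_i}K_{i,j}\mu_j$ vanish by \eqref{eq:SLGaussLaw2}, the diagonal term contributes $-\tfrac{1}{2}\int_{\Gamma_i}\mu_i$ after interchanging the order of integration and invoking \eqref{eq:DLP2}, and $\int_{\Gamma_i}L_i\mu_i=|\Gamma_i|\int_{\Gamma_i}\mu_i$. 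What remains is $|\Gamma_i|\int_{\Gamma_i}\mu_i=0$, so $\int_{\Gamma_i}\mu_i=0$ for every $i$; in particular $L\mu=0$ and the equation collapses to $\left(\tfrac{1}{2}I+K\right)\mu=0$ together with the constraints \eqref{eq:IntEqConstraints}.

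Next I would pass to the layer potential $u(\bx):=S_{\Gamma}\mu(\bx)$, which is harmonic in $\bR^2\setminus\Gamma$ and continuous across $\Gamma$. By the jump relation \eqref{SLnormaljump}, the quantity $\left(\tfrac{1}{2}I+K\right)\mu$ is precisely the interior normal derivative $\partial u/\partial n_-$ on $\Gamma$, so $u$ has vanishing interior Neumann data on each $\Gamma_i$. Since $u$ is harmonic in $D_i$, Green's identity gives $\nabla u\equiv\mathbf{0}$ there, hence $u\equiv c_i$ in $D_i$ for some constant $c_i$, and by continuity of the single layer potential $u=c_i$ on $\Gamma_i$ from the exterior as well. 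On the exterior side, $u$ is harmonic in $E$, equals the constant $c_i$ on $\Gamma_i$, satisfies $-\int_{\Gamma_i}\partial u/\partial n_+ = \int_{\Gamma_i}\mu_i = 0$ by \eqref{eq:SLGaussLaw1}, and since the total charge $\sum_i\int_{\Gamma_i}\mu_i=0$ it decays at infinity by \eqref{eq:SLGrowthAtInfty}. Thus $u$ solves the elastance problem with all $q_i=0$, and the uniqueness lemma yields $u\equiv 0$ in $E$. By continuity each $c_i=0$, so $u\equiv 0$ in every $D_i$ too, hence $u\equiv 0$ on all of $\bR^2$. Finally $\mu$ is the jump of the normal derivative of $u$ across $\Gamma$, namely $\mu = \partial u/\partial n_- - \partial u/\partial n_+$, which is zero; therefore $\mu\equiv 0$.

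I do not anticipate a serious obstacle: the argument is a bookkeeping exercise combining the jump relations, the Gauss-law identities, and the already-proved uniqueness lemma. The point requiring the most care is the charge-neutrality accounting in the exterior step, where the constraints $\int_{\Gamma_i}\mu_i=0$ obtained in the first paragraph are used both to get decay at infinity via \eqref{eq:SLGrowthAtInfty} and to verify the zero-charge hypothesis of the uniqueness lemma; it is therefore essential to handle the $L$ block first. A minor secondary subtlety is keeping straight the sign and side of the jump relation \eqref{SLnormaljump} when identifying $\left(\tfrac{1}{2}I+K\right)\mu$ with the interior normal derivative and $\mu$ with the normal-derivative jump.
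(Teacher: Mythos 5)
Your proposal is correct and follows essentially the same route as the paper's own proof: integrate to kill the $L$ block (as in Lemma \ref{Lem:equivalence}), identify $\left(\tfrac{1}{2}I+K\right)\mu$ with $\partial u/\partial n_-$ for $u=S_\Gamma\mu$, invoke interior Neumann uniqueness and then the elastance uniqueness lemma, and recover $\mu$ as the jump $\partial u/\partial n_- - \partial u/\partial n_+ = 0$. The only difference is that you spell out the charge-neutrality and decay bookkeeping slightly more explicitly than the paper does.
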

\begin{proof}
 Let $\mu \in \mathcal{N}\left(\frac{1}{2}I + K +L \right)$, i.e. it solves
 $\left(\frac{1}{2}I+K+L\right)\mu = 0$. Following the proof of Lemma \ref{Lem:equivalence},
 we conclude that $L \mu = 0$ and therefore $\left(\frac{1}{2}I + K\right)\mu = 0$. Let $u=S_{\Gamma} \mu$.  
 From the properties of the single layer potential 
 \begin{equation}
 \frac{\partial u}{\partial n}_{-} = \left(\frac{1}{2}I+K \right)\mu = 0 \, .
 \end{equation}
 By uniqueness of solutions to interior Neumann problem, we conclude that $u$ is a constant on each boundary
 component. Thus, $u$ solves the Elastance problem with $q_{i} = 0$, as $L \mu = 0$.  By uniqueness of
 solutions to the Elastance problem, we conclude that $u\equiv 0$ in $E$. Hence, 
 $\frac{\partial u}{\partial n}_{+} =0$. From the properties of the single layer,
 \begin{equation}
  \mu = \frac{\partial u}{\partial n}_{-} - \frac{\partial u}{\partial n}_{+} = 0 \, .
 \end{equation}
 Therefore, $\mathcal{N} \left(\frac{1}{2}I + K + L\right) = \left \{ 0 \right \}$.
\end{proof}
By the Fredholm alternative, we conclude that \eqref{eq:ModRep2} has a unique
solution $\mu$. 
\section{The mobility problem} \label{sec:mob}
Supose now that we have $N$ rigid bodies immersed in an incompressible Stokesian
fluid in $\bR^{2}$. Let $\bF_{i},T_{i}$ denote the force
and torque exerted on rigid body $D_i$ in a fluid which is otherwise assumed
to be at rest and let $\mathbf{v}_{i},\omega_{i}$ be the corresponding rigid body
motion, where $\omega_{i}$ is the angular velocity about the centroid
of $D_i$. The mobility matrix $\mathbf{M}\in\bR^{3N\times3N}$
is the linear mapping from the forces and torques on the rigid bodies
to the respective rigid body motions:
\[ \cU = \bM \cF  \]
where $\cU = (\mathbf{v}_1,\omega_1,\dots,\mathbf{v}_N,\omega_N)$ and 
$\cF = (\bF_1,T_1,\dots,\bF_N,T_N)$.

Referring to Fig. \eqref{Fig:1}, let $D_{i}$ now represent the rigid bodies
and let $E$ represent the Stokesian fluid with viscosity $\mu=1$.
Further, let us assume that there are no other volume forces on the fluid.
Let $\mathbf{u}(\mathbf{x})=(u_{1}(\mathbf{x}),u_{2}(\mathbf{x}))$
represent the fluid velocity in $E$ and let 
$(\bF_{1},T_{1},\ldots,\bF_{N},T_{N})$
be the force and torque exerted on the rigid bodies.
Let $\mathbf{x}^c_{i}=\frac{1}{\left|\Gamma_i \right|}\int_{\Gamma_{i}} \mathbf{x} ds_{\bx}$ 
be the centroid of $\Gamma_{i}$. Let $p$ be the fluid
pressure and let $\boldsymbol{\sigma}$ be the stress tensor associated with
the flow:
\begin{equation}
\sigma_{ij}=-p\delta_{ij}+\left(\frac{\partial u_{i}}{\partial x_{j}}+\frac{\partial u_{j}}{\partial x_{i}}\right)=-p\delta_{ij}+2e\left(\mathbf{u}\right)\label{StressTensor}
\end{equation}
where $\delta_{ij}$ is the Kronecker delta,
\begin{equation}
e\left(\mathbf{u}\right)=\frac{1}{2}\left(D\mathbf{u}+D\mathbf{u}^{T}\right)
\end{equation}
is the strain tensor associated with the flow, 
and $D\mathbf{u}$ is the gradient of $\mathbf{u}$.

On the surface of rigid bodies $\Gamma_{i}$, 
\begin{equation}
\mathbf{f}=\boldsymbol{\sigma}\cdot\mathbf{n}=\left[\begin{array}{cc}
\sigma_{11} & \sigma_{12}\\
\sigma_{21} & \sigma_{22}
\end{array}\right]\left[\begin{array}{c}
n_{1}\\
n_{2}
\end{array}\right]
\end{equation}
represents the surface force or surface traction exerted by the fluid
on $D_{i}$, where $\mathbf{n}$ is the outward normal to $\Gamma_{i}$. For notational convenience, let 
$\mathbf{x}^\perp = \left[\begin{array}{c}
-x_{2}\\x_{1} \end{array}\right]$ and $\nabla^{\perp} = \left[\begin{array}{c}
-\frac{\partial }{\partial x_{2}}\\ \frac{\partial }{\partial x_{1}} 
\end{array}\right] \, .$
Then $\mathbf{u}\left(\mathbf{x}\right)$ solves 
\cite{kim2005microhydrodynamics,pozrikidis1992boundary}
\begin{align}
-\Delta\mathbf{u}+\nabla p & =0\quad\mbox{in E}\label{eq:StokesFlowEq}\\
\nabla\cdot\mathbf{u} & =0\quad\mbox{in E}\label{eq:MassConservation}\\
\mathbf{u}\left(\mathbf{x}\right)|_{\Gamma_{i}} & =\mathbf{v}_{i}+\omega_{i}\left(\mathbf{x} - \mathbf{x}^{c}_i\right)^{\perp}\label{eq:RigidBodyMotion}\\
\int_{\Gamma_{i}}\mathbf{f}\,ds_{\bx}=\int_{\Gamma_{i}}\boldsymbol{\sigma}.\mathbf{n}\,ds_{\bx} & =-\mathbf{F}_{i}\label{eq:ForceCondition}\\
\int_{\Gamma_{i}}  
( \mathbf{f},
(\mathbf{x} -\mathbf{x}^{c}_i)^{\perp}) \, ds_{\bx} 
& =-T_{i}\label{eq:TorqueConditions}\\
\mathbf{u}\left(\mathbf{x}\right) & \to\mathbf{0}\quad
\mbox{as }\left|\mathbf{x}\right|\to\infty \, , \label{eq:NoFlowAtInfty}
\end{align}
where $\left(\mathbf{a}, \mathbf{b} \right)$ represents the Euclidean inner product for
vectors $\mathbf{a},\mathbf{b}\in \bR^{2}$.
Equations \eqref{eq:StokesFlowEq} and \eqref{eq:MassConservation} are the
governing equations for Stokes flow in domain $E$. 
Equation \eqref{eq:RigidBodyMotion}
enforces a rigid body motion on $D_i$, where 
$\mathbf{v}_{i},\omega_{i}$ are unknown.
Equations \eqref{eq:ForceCondition} and \eqref{eq:TorqueConditions}
state the net applied forces and torques are given by the {\em known} quantities
$(\bF_{i},T_{i})$. Finally,
\eqref{eq:NoFlowAtInfty} states that the fluid is at rest in the absense
of forcing. 
As a consequence of Stokes paradox,
there might not exist a solution to the set of equations described
above. In fact, it can be shown that 
\begin{equation}
\mathbf{u}\left(\mathbf{x}\right)=O\left(-\sum_{i=1}^{N}\mathbf{F}_{i}
\log{\left|\mathbf{x}\right|}\right) \, .
\end{equation}
Thus, a necessary condition for a solution to exist is that
\begin{equation}
\sum_{i=1}^{N} \mathbf{F}_{i} = 0 \, . \label{eq:NoNetForce}
\end{equation}
From 
\cite{kim2005microhydrodynamics,pozrikidis1992boundary}, it turns out that
\eqref{eq:NoNetForce} is also sufficient
for a solution satisyfing equation \eqref{eq:NoFlowAtInfty}.
To prove uniqueness for the mobility problem, we need the following
lemmas which can be found in \cite{UniquenessNollFinn}.

\begin{lem}
\label{lem:HarmonicProperty1}
If $h$ is a bounded harmonic function in $E$
and $n$ is an integer greater than $0$, 
with $h = O\left( r^{-n}\right)\;\;\text{as}\;\; r\to\infty$, then
\begin{equation} 
h(r,\theta) = \sum_{k=n}^{\infty} r^{-k} a_k \left(\theta \right) \, ,
\end{equation}
which converges uniformly outside $B_{R} \left(0\right)$ for some $R$.
\end{lem}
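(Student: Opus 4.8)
The plan is to reduce the statement to one about a single exterior disc and then separate variables. First I would fix $R_{0}>0$ large enough that $\mathbb{R}^{2}\setminus B_{R_{0}}(0)\subset E$; such an $R_{0}$ exists because $E$ is the complement of finitely many bounded sets. On the region $\{r>R_{0}\}$ the function $h$ is harmonic, hence $C^{\infty}$, and bounded, say $|h|\le M$. For each fixed $r>R_{0}$ I expand $h(r,\cdot)$ in its Fourier series $h(r,\theta)=\sum_{k\in\mathbb{Z}}c_{k}(r)e^{ik\theta}$, with $c_{k}(r)=\frac{1}{2\pi}\int_{0}^{2\pi}h(r,\theta)e^{-ik\theta}\,d\theta$. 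Since $h$ is smooth, differentiation under the integral sign together with the polar form of the Laplace equation, $h_{rr}+r^{-1}h_{r}+r^{-2}h_{\theta\theta}=0$, yields the Euler equation $c_{k}''+r^{-1}c_{k}'-k^{2}r^{-2}c_{k}=0$ on $(R_{0},\infty)$, so $c_{0}(r)=\alpha_{0}+\beta_{0}\log r$ and $c_{k}(r)=\alpha_{k}r^{|k|}+\beta_{k}r^{-|k|}$ for $k\neq 0$.

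Next I would use the two hypotheses to eliminate terms. The crude bound $|c_{k}(r)|\le\frac{1}{2\pi}\int_{0}^{2\pi}|h(r,\theta)|\,d\theta\le M$ forces $\alpha_{k}=0$ for all $k\neq 0$ and $\beta_{0}=0$, so $h(r,\theta)=\alpha_{0}+\sum_{k\neq0}\beta_{k}r^{-|k|}e^{ik\theta}$. The decay hypothesis gives the sharper estimate $|c_{k}(r)|=O(r^{-n})$ as $r\to\infty$: applied to $k=0$ it gives $\alpha_{0}=0$, and applied to $0<|k|<n$, where $r^{-|k|}$ decays strictly more slowly than $r^{-n}$, it gives $\beta_{k}=0$. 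Collecting the $k$ and $-k$ contributions and setting $a_{j}(\theta)=\beta_{j}e^{ij\theta}+\beta_{-j}e^{-ij\theta}$ produces the claimed expansion $h(r,\theta)=\sum_{j\ge n}r^{-j}a_{j}(\theta)$.

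It remains to establish the uniform convergence, which I expect to be the only part requiring genuine care, since it is what converts the formal Fourier manipulations into a legitimate identity. Fix $R_{1}=2R_{0}$ and set $M_{1}=\sup_{|\bx|=R_{1}}|h|$. Evaluating $c_{k}(R_{1})=\beta_{k}R_{1}^{-|k|}$ gives $|\beta_{k}|\le M_{1}R_{1}^{|k|}$, hence for $r\ge R:=3R_{0}$ one has $r^{-|k|}|\beta_{k}|\le M_{1}(R_{1}/r)^{|k|}\le M_{1}(2/3)^{|k|}$. By the Weierstrass $M$-test the series converges absolutely and uniformly on $\{r\ge R\}$ to a continuous function whose Fourier coefficients (computed termwise) are exactly the $c_{k}(r)$; since these agree with the Fourier coefficients of the continuous function $h(r,\cdot)$, the two functions coincide, which also retroactively justifies the termwise steps above. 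The main obstacle is thus purely the bookkeeping that turns boundedness on an intermediate circle into the geometric-in-$k$ tail bound; everything else is the standard separation-of-variables argument, i.e. the two-dimensional analogue of the multipole expansion. (Alternatively, one could apply the Kelvin transform $h^{*}(\bx)=h(\bx/|\bx|^{2})$, which is harmonic in $2$D, to pass to a bounded harmonic function on a punctured disc, invoke removability of the singularity, expand $h^{*}$ in its convergent power series about the origin, and transform back, the decay hypothesis forcing the low-order terms to vanish.)
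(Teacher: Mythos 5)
Your argument is correct. Note that the paper itself gives no proof of this lemma; it is quoted from the cited reference \cite{UniquenessNollFinn}, so there is no in-paper argument to compare against. What you give is the standard self-contained proof: Fourier expansion on circles, the Euler equation for the coefficients, boundedness killing the growing and logarithmic modes, the $O(r^{-n})$ hypothesis killing the constant and the modes with $0<|k|<n$, and the evaluation of the coefficients on an intermediate circle to get the geometric tail bound needed for the Weierstrass $M$-test and the identification of the sum with $h$ -- i.e.\ precisely the two-dimensional multipole-expansion argument (equivalently, your Kelvin-transform remark), which is what the cited source relies on. The only steps worth a word in a written version are the justification of differentiating $c_k(r)$ under the integral sign (harmless, since $h$ is smooth on compact sub-annuli) and the identification of the uniformly convergent series with $h$ via equality of Fourier coefficients, both of which you address.
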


Let $\omega$ be the vorticity corresponding to the flow, defined by
\begin{equation}
\omega = \left( \nabla^{\perp} , \mathbf{u} \right) \, .
\end{equation}

\begin{lem}
If $\mathbf{u}$ satisfies equation \eqref{eq:NoFlowAtInfty}, then $\omega = O\left(r^{-1} \right)$ as $r \to \infty$.
\end{lem}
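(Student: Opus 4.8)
The plan is to exploit the fact that for any Stokes flow the vorticity $\omega = (\nabla^{\perp},\mathbf{u})$ is a harmonic function, and then apply Lemma~\ref{lem:HarmonicProperty1}. Taking the curl (in the two-dimensional sense, i.e.\ applying $\nabla^{\perp}\cdot$) of the momentum equation \eqref{eq:StokesFlowEq}, the pressure term drops out because $\nabla^{\perp}\cdot\nabla p = 0$, and one is left with $\Delta\omega = 0$ in $E$. So the first step is simply to record that $\omega$ is harmonic in the exterior domain.

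The second step is to show $\omega$ is bounded and in fact tends to zero at infinity, so that Lemma~\ref{lem:HarmonicProperty1} applies with some positive integer $n$. Here I would use the decay hypothesis \eqref{eq:NoFlowAtInfty} on $\mathbf{u}$ together with interior elliptic (or, equivalently, Stokes) estimates: since $\mathbf{u}$ solves the homogeneous Stokes equations in the exterior region and $\mathbf{u}\to\mathbf{0}$, a standard interior gradient estimate on a ball $B_{r/2}(\mathbf{x})$ of radius comparable to $|\mathbf{x}|$ gives $|\nabla\mathbf{u}(\mathbf{x})| \lesssim |\mathbf{x}|^{-1}\sup_{B_{r/2}(\mathbf{x})}|\mathbf{u}| \to 0$ as $|\mathbf{x}|\to\infty$; in particular $\omega$, being a component of $\nabla\mathbf{u}$, is bounded and $\omega = o(1)$. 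Hence by Lemma~\ref{lem:HarmonicProperty1} (applied with $n=1$, using only that $\omega$ is a bounded harmonic function vanishing at infinity) we get an expansion $\omega(r,\theta) = \sum_{k\ge 1} r^{-k} a_k(\theta)$ converging uniformly outside some $B_R(0)$, which immediately yields $\omega = O(r^{-1})$.

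The point to be careful about — and the main obstacle — is justifying the quantitative decay $|\nabla\mathbf{u}| = O(r^{-1})$ rigorously rather than heuristically; naively one only knows $\mathbf{u}\to\mathbf{0}$ with no rate, so one cannot directly conclude a rate for $\nabla\mathbf{u}$ without an interior estimate whose constant is uniform and whose scaling in the ball radius produces exactly the factor $r^{-1}$. The clean way is to invoke the Stokes analogue of the Cauchy estimate for harmonic functions: rescaling $\mathbf{x}\mapsto r\mathbf{x}$ maps the Stokes system to itself, and interior regularity on the unit ball gives $|\nabla\mathbf{u}|$ at the center controlled by the sup of $|\mathbf{u}|$ on the unit ball, which after undoing the scaling is the claimed $r^{-1}$ bound. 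Alternatively — and perhaps more in the spirit of this paper — one can cite \cite{UniquenessNollFinn} directly, since this is precisely the kind of asymptotic lemma assembled there; the argument above is just the one-line reason it holds. Either way, once boundedness and $o(1)$ decay of the harmonic function $\omega$ are in hand, Lemma~\ref{lem:HarmonicProperty1} finishes the proof.
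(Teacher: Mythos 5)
The paper does not actually prove this lemma: it is one of the asymptotic facts quoted verbatim from \cite{UniquenessNollFinn}, so there is no internal proof to compare against, and supplying an argument as you do is a reasonable exercise. Your outline is essentially sound: taking $\nabla^{\perp}\cdot$ of \eqref{eq:StokesFlowEq} kills the pressure and shows $\omega$ is harmonic in $E$, and the scale-invariant interior gradient estimate for the homogeneous Stokes system (valid, e.g., because each velocity component is biharmonic), applied on balls $B_{|\bx|/2}(\bx)\subset E$, gives $\left|\nabla\mathbf{u}(\bx)\right|\leq C\left|\bx\right|^{-1}\sup_{B_{|\bx|/2}(\bx)}\left|\mathbf{u}\right|$. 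One correction, though: at that point you only record $\omega=o(1)$ and then try to finish by invoking Lemma \ref{lem:HarmonicProperty1} ``with $n=1$''; as stated, that lemma \emph{assumes} $h=O(r^{-n})$, so using it to produce the rate from mere boundedness and vanishing at infinity is circular as written. The fix is already contained in your own inequality: since \eqref{eq:NoFlowAtInfty} makes $\sup_{B_{|\bx|/2}(\bx)}\left|\mathbf{u}\right|$ bounded for large $\left|\bx\right|$, the estimate directly yields $\left|\nabla\mathbf{u}\right|=O(r^{-1})$, hence $\omega=O(r^{-1})$, and no appeal to Lemma \ref{lem:HarmonicProperty1} is needed (alternatively, one must separately prove the standard fact that a bounded harmonic function on a planar exterior domain tending to zero at infinity admits an expansion with no constant or logarithmic term, which is not literally Lemma \ref{lem:HarmonicProperty1}). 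A slightly more elementary route that avoids Stokes interior estimates altogether: since $\omega$ is harmonic it satisfies the mean value property, and by Green's theorem the average of $\omega$ over $B_{|\bx|/2}(\bx)$ equals the circulation of $\mathbf{u}$ around $\partial B_{|\bx|/2}(\bx)$ divided by the area, which is bounded by $C\sup\left|\mathbf{u}\right|/\left|\bx\right|$; this gives $\omega=O(r^{-1})$ in one line from boundedness of $\mathbf{u}$.
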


\begin{lem}
\label{lem:pressureVorticityBound}
If $\omega = O\left(r^{-n}\right)$ as $r\to\infty$ for integer $n>0$, 
then $p = O\left(r^{-n}\right)$ as $r\to\infty$. 
\end{lem}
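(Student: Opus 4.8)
The plan is to exploit the classical fact that in two-dimensional Stokes flow the pressure and the vorticity are harmonic conjugates, so that the decay of one forces the decay of the other. Writing the momentum equation \eqref{eq:StokesFlowEq} componentwise as $\Delta u_i = \partial_i p$, I would first extract two consequences. Taking the divergence and using incompressibility \eqref{eq:MassConservation} gives $\Delta p = 0$, so $p$ is harmonic in $E$. Taking instead the two-dimensional curl, $\Delta\omega = \partial_1(\Delta u_2) - \partial_2(\Delta u_1) = \partial_1\partial_2 p - \partial_2\partial_1 p = 0$, so $\omega$ is harmonic in $E$ as well. A short computation using \eqref{eq:MassConservation} once more — replacing $\partial_1^2 u_1$ by $-\partial_1\partial_2 u_2$ in $\partial_1 p = \Delta u_1$, and symmetrically for $\partial_2 p$ — yields the Cauchy--Riemann-type identity
\begin{equation}
\nabla p = \nabla^{\perp}\omega \quad\mbox{in } E \, ,
\end{equation}
i.e. $p$ is, up to an additive constant, a harmonic conjugate of $\omega$.

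Next, since $\omega = O(r^{-n})$ with $n \ge 1$, $\omega$ is a bounded harmonic function in $E$, so Lemma \ref{lem:HarmonicProperty1} applies and gives $\omega(r,\theta) = \sum_{k=n}^{\infty} r^{-k} a_k(\theta)$, converging uniformly outside some $B_R(0)$; because $\omega$ is harmonic each angular factor has the form $a_k(\theta) = \alpha_k\cos k\theta + \beta_k\sin k\theta$. Differentiating this series term by term (equivalently, applying the interior gradient estimate for harmonic functions on balls of radius comparable to $r$) gives $|\nabla\omega| = O(r^{-n-1})$, and hence, by the identity above, $|\nabla p| = O(r^{-n-1})$. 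Note that this bound already rules out any $\log r$ or growing terms in the exterior expansion of the harmonic function $p$, since $\nabla(\log r)$ is only $O(r^{-1})$.

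Finally, since $n \ge 1$ the bound $|\nabla p| = O(r^{-n-1})$ is integrable at infinity, so $p(\mathbf{x})$ has a finite limit $p_\infty$ as $|\mathbf{x}|\to\infty$, and integrating the gradient estimate along a ray from $\mathbf{x}$ out to infinity gives $|p(\mathbf{x}) - p_\infty| \le \frac{C}{n}\,|\mathbf{x}|^{-n}$. The pressure is determined only up to an additive constant, which we fix by the standard normalization $p_\infty = 0$ that is consistent with \eqref{eq:NoFlowAtInfty} (the fluid being quiescent at infinity); with this choice $p = O(r^{-n})$, as claimed. The one point requiring care is exactly this additive constant: without normalizing the pressure at infinity the sharpest conclusion is $p = p_\infty + O(r^{-n})$, so the statement should be read as a statement about the pressure field in the normalization under which the mobility problem \eqref{eq:StokesFlowEq}--\eqref{eq:NoFlowAtInfty} is posed. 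Everything else is routine once the identity $\nabla p = \nabla^{\perp}\omega$ and the harmonicity of $\omega$ are in hand.
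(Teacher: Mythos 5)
Your argument is correct, but there is nothing in the paper to compare it against line by line: the paper does not prove this lemma, it simply quotes it (together with Lemmas \ref{lem:HarmonicProperty1} and \ref{lem:InnerProductBound}) from the Finn--Noll uniqueness paper \cite{UniquenessNollFinn}. Your proof is the standard route and fills that gap cleanly: from \eqref{eq:StokesFlowEq} and \eqref{eq:MassConservation} both $p$ and $\omega$ are harmonic and satisfy the conjugacy relation $\nabla p = \nabla^{\perp}\omega$, the expansion of Lemma \ref{lem:HarmonicProperty1} (or, equivalently, the interior gradient estimate applied on balls $B_{r/2}(\bx)$ with $|\bx|=r$) upgrades $\omega = O(r^{-n})$ to $\left|\nabla \omega\right| = O(r^{-n-1})$, and integrating $\nabla p$ to infinity gives $p = p_{\infty} + O(r^{-n})$. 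Two small remarks. First, to get a single constant $p_{\infty}$ rather than a ray-dependent limit, also integrate the gradient bound along circular arcs: the variation of $p$ over $\partial B_{r}(0)$ is at most $2\pi r \cdot O(r^{-n-1}) = O(r^{-n})\to 0$, so the directional limits coincide; this is routine but deserves a sentence. Second, your caveat about the additive constant is exactly right and is the correct reading of the lemma: the Stokes pressure is determined only up to a constant, so the statement presupposes the normalization $p\to 0$ at infinity, which is the convention of \cite{UniquenessNollFinn} and is harmless where the lemma is used --- in the proof of \eqref{eq:ForceDecayatInfty} a constant pressure contributes $p_{\infty}\int_{\partial B_{R}(0)}\left(\mathbf{u},\mathbf{n}\right)ds_{\bx}$, which vanishes by incompressibility and the divergence-free rigid-body boundary values, and the eventual single-layer representation \eqref{eq:TotRepresentationStokes} produces a pressure that does decay at infinity.
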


Using these two lemmas, it follows that
\begin{lem}
\label{lem:InnerProductBound}
If $\mathbf{u}$ satisfies equation \eqref{eq:NoFlowAtInfty}, then on $\partial B_{r} \left(0 \right)$, 
\begin{equation}
p\left( \mathbf{u} , \mathbf{n} \right) - \omega \left(\mathbf{u}^{\perp},\mathbf{n} \right) = O\left(r^{-1}\right)\;\;\text{as}\;\; r\to\infty \, .
\end{equation}
\end{lem}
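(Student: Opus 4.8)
The plan is to deduce Lemma \ref{lem:InnerProductBound} directly from the two preceding lemmas together with the decay hypothesis \eqref{eq:NoFlowAtInfty}. First I would pin down the decay of the pressure: the lemma stating that $\omega = O(r^{-1})$ under \eqref{eq:NoFlowAtInfty}, fed into Lemma \ref{lem:pressureVorticityBound} with $n=1$, gives $p = O(r^{-1})$ as $r\to\infty$. Thus on $\partial B_{r}(0)$ we have both $|p| = O(r^{-1})$ and $|\omega| = O(r^{-1})$.

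Next I would use \eqref{eq:NoFlowAtInfty} itself. Since $\mathbf{u}(\mathbf{x})\to\mathbf{0}$, both $\mathbf{u}$ and $\mathbf{u}^{\perp}$ are bounded (indeed $o(1)$) on $\partial B_{r}(0)$ for all sufficiently large $r$; because $\mathbf{n}$ is a unit vector and $\mathbf{x}\mapsto\mathbf{x}^{\perp}$ is an isometry, the Cauchy--Schwarz inequality gives $|(\mathbf{u},\mathbf{n})|\le|\mathbf{u}|$ and $|(\mathbf{u}^{\perp},\mathbf{n})|\le|\mathbf{u}^{\perp}| = |\mathbf{u}|$, both bounded on $\partial B_{r}(0)$. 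Combining, $|p(\mathbf{u},\mathbf{n})|\le|p|\,|\mathbf{u}| = O(r^{-1})$ and $|\omega(\mathbf{u}^{\perp},\mathbf{n})|\le|\omega|\,|\mathbf{u}| = O(r^{-1})$, and the triangle inequality then yields $p(\mathbf{u},\mathbf{n}) - \omega(\mathbf{u}^{\perp},\mathbf{n}) = O(r^{-1})$ on $\partial B_{r}(0)$, as claimed.

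I do not expect a genuine obstacle in proving this lemma in isolation: the analytic work is already carried by the vorticity estimate and by Lemma \ref{lem:pressureVorticityBound}, and what remains is a bookkeeping step. The only point worth stating carefully is that we never claim any particular decay rate for $\mathbf{u}$ itself, only boundedness from \eqref{eq:NoFlowAtInfty} --- the full $r^{-1}$ factor is supplied by $p$ and by $\omega$. I would also record why this particular combination is singled out: a short computation shows that on $\partial B_{r}(0)$ the Stokes traction obeys $(\boldsymbol{\sigma}\cdot\mathbf{n},\mathbf{u}) = -(p(\mathbf{u},\mathbf{n}) - \omega(\mathbf{u}^{\perp},\mathbf{n})) + \frac{\partial |\mathbf{u}|^{2}}{\partial n}$, so that the estimate above is exactly what is needed to drive the boundary-at-infinity term in the Green's identity for the Stokes energy to zero in the uniqueness argument for the mobility problem.
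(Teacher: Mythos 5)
Your proposal is correct and follows exactly the route the paper intends: the paper gives no separate proof, simply stating that the lemma ``follows'' from the $\omega = O(r^{-1})$ estimate and Lemma \ref{lem:pressureVorticityBound}, which is precisely your chain ($\omega = O(r^{-1}) \Rightarrow p = O(r^{-1})$, then boundedness of $\mathbf{u}$ and Cauchy--Schwarz). Your closing identity $(\boldsymbol{\sigma}\cdot\mathbf{n},\mathbf{u}) = -\bigl(p(\mathbf{u},\mathbf{n}) - \omega(\mathbf{u}^{\perp},\mathbf{n})\bigr) + \partial|\mathbf{u}|^{2}/\partial n$ is also a correct (and clarifying) observation about why this combination appears in the subsequent decay argument.
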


\begin{lem}
\label{lem:BoundaryTermZero}
If $\mathbf{u}$ satisfies equations 
\eqref{eq:RigidBodyMotion}, \eqref{eq:ForceCondition} and \eqref{eq:TorqueConditions} with $\mathbf{F}_{i} = \mathbf{0}$ and
$T_{i} = 0$, then
\begin{equation}
\int_{\Gamma_i} \left(\mathbf{u}, \mathbf{f} \right) ds_{\bx} = 0 \, .
\end{equation}
\end{lem}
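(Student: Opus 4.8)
The plan is to use only the rigid-body structure of $\mathbf{u}$ on $\Gamma_i$ together with the vanishing of the net force and torque; no behavior at infinity enters. First I would substitute the boundary condition \eqref{eq:RigidBodyMotion}, writing $\mathbf{u}|_{\Gamma_i}=\mathbf{v}_i+\omega_i(\mathbf{x}-\mathbf{x}^{c}_{i})^{\perp}$, and expand by linearity of the Euclidean inner product, pulling the constants $\mathbf{v}_i,\omega_i$ (unknown, but independent of $\bx$) outside the integrals:
\[
\int_{\Gamma_i}\left(\mathbf{u},\mathbf{f}\right)ds_{\bx}
=\left(\mathbf{v}_i,\int_{\Gamma_i}\mathbf{f}\,ds_{\bx}\right)
+\omega_i\int_{\Gamma_i}\left((\mathbf{x}-\mathbf{x}^{c}_{i})^{\perp},\mathbf{f}\right)ds_{\bx}.
\]

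Second, I would identify the two remaining integrals with the prescribed force and torque. By \eqref{eq:ForceCondition}, $\int_{\Gamma_i}\mathbf{f}\,ds_{\bx}=-\mathbf{F}_i$; and since $(\cdot,\cdot)$ is symmetric, $\int_{\Gamma_i}\left((\mathbf{x}-\mathbf{x}^{c}_{i})^{\perp},\mathbf{f}\right)ds_{\bx}=\int_{\Gamma_i}\left(\mathbf{f},(\mathbf{x}-\mathbf{x}^{c}_{i})^{\perp}\right)ds_{\bx}=-T_i$ by \eqref{eq:TorqueConditions}. Hence $\int_{\Gamma_i}\left(\mathbf{u},\mathbf{f}\right)ds_{\bx}=-(\mathbf{v}_i,\mathbf{F}_i)-\omega_i T_i$. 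Finally, imposing the hypotheses $\mathbf{F}_i=\mathbf{0}$ and $T_i=0$ gives the claim.

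There is essentially no obstacle here: the only points needing a moment's care are that $\mathbf{v}_i$ and $\omega_i$ may be factored out of the boundary integrals because they do not depend on $\bx$, and that the order of the arguments in \eqref{eq:TorqueConditions} is immaterial because the Euclidean inner product is symmetric. I would also note that the identity is established body-by-body and does not require summing over $i$ or invoking \eqref{eq:NoNetForce}; this is the form presumably needed downstream, e.g.\ to drive an energy identity of the form $\int_E 2\,e(\mathbf{u}){:}e(\mathbf{u})\,dV=0$ in the uniqueness argument for the mobility problem, once the boundary contributions on $\Gamma_i$ and the decay estimate of Lemma \ref{lem:InnerProductBound} at $|\bx|\to\infty$ are combined.
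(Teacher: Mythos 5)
Your argument is correct and coincides with the paper's proof: substitute the rigid-body form of $\mathbf{u}$ on $\Gamma_i$, use linearity to factor out $\mathbf{v}_i$ and $\omega_i$, and identify the remaining integrals with $-\mathbf{F}_i$ and $-T_i$ via \eqref{eq:ForceCondition} and \eqref{eq:TorqueConditions}, which vanish by hypothesis. The paper's proof is exactly this one-line computation, so no further comparison is needed.
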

\begin{proof}
\begin{equation}
\int_{\Gamma_i} \left(\mathbf{u}, \mathbf{f} \right) ds_{\bx} = \int_{\Gamma_{i}} \left( \mathbf{v}_{i}+\omega_{i}\left(\mathbf{x} - 
\mathbf{x}^{c}_i \right)^{\perp},\mathbf{f}\right) ds_{\bx} = -\left(\mathbf{v}_{i} , \mathbf{F}_{i} \right) -\omega_{i}T_{i} = 0 \, .
\end{equation}
\end{proof}

\begin{lem}
\label{lem:BoundaryVortEst}
If $\mathbf{u}$ satisfies equations \eqref{eq:RigidBodyMotion}, \eqref{eq:ForceCondition} 
and \eqref{eq:TorqueConditions} with $\mathbf{F}_{i} = \mathbf{0}$ and
$T_{i} = 0$, then
\begin{equation}
 \int_{\Gamma_i} p\left(\mathbf{u}, \mathbf{n}\right) - \omega \left(\mathbf{u}^{\perp},\mathbf{n} \right) ds_{\bx} = 
-4 \omega_{i}^{2} \left|D_{i} \right|
\end{equation}
\end{lem}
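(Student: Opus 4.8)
Since the integrand lives only on $\Gamma_i$, the plan is to reduce the claim to a boundary computation there, using three facts: the rigid-body boundary condition \eqref{eq:RigidBodyMotion}, incompressibility \eqref{eq:MassConservation}, and the vanishing of the net force and torque in \eqref{eq:ForceCondition}--\eqref{eq:TorqueConditions}. Write $\mathbf{r}=\mathbf{x}-\mathbf{x}^c_i$, let $\boldsymbol{\tau}$ be the unit tangent on $\Gamma_i$ (so $\boldsymbol{\tau}=\mathbf{n}^{\perp}$ for the given orientation), and note the algebraic identities $(\mathbf{a}^{\perp},\mathbf{b})=-(\mathbf{a},\mathbf{b}^{\perp})$, $(\mathbf{a}^{\perp},\mathbf{b}^{\perp})=(\mathbf{a},\mathbf{b})$, $(\mathbf{a}^{\perp})^{\perp}=-\mathbf{a}$. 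The one substantive step is to determine $\partial_n\mathbf{u}$ on $\Gamma_i$. The tangential derivative is fixed by the boundary trace: $\partial_{\tau}\mathbf{u}=\omega_i\,\partial_{\tau}(\mathbf{r}^{\perp})=\omega_i\boldsymbol{\tau}^{\perp}=-\omega_i\mathbf{n}$. Decomposing $\nabla\mathbf{u}=\mathbf{n}\otimes\partial_n\mathbf{u}+\boldsymbol{\tau}\otimes\partial_{\tau}\mathbf{u}$ on $\Gamma_i$ and inserting this into $\nabla\cdot\mathbf{u}=0$ and into the definition $\omega=(\nabla^{\perp},\mathbf{u})$ gives $(\mathbf{n},\partial_n\mathbf{u})=-(\boldsymbol{\tau},\partial_{\tau}\mathbf{u})=0$ and $(\mathbf{n}^{\perp},\partial_n\mathbf{u})=\omega+(\mathbf{n},\partial_{\tau}\mathbf{u})=\omega-\omega_i$, hence
\[
\partial_n\mathbf{u}=(\omega-\omega_i)\,\mathbf{n}^{\perp}\qquad\text{on }\Gamma_i .
\]
Expanding $e(\mathbf{u})\mathbf{n}$ componentwise yields the purely algebraic identity $2e(\mathbf{u})\mathbf{n}=2\partial_n\mathbf{u}-\omega\mathbf{n}^{\perp}$, so by \eqref{StressTensor} the surface traction on $\Gamma_i$ has the clean form $\mathbf{f}=\boldsymbol{\sigma}\cdot\mathbf{n}=-p\,\mathbf{n}+(\omega-2\omega_i)\,\mathbf{n}^{\perp}$.

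\textbf{Assembling the identity.} With the traction in hand, I would move the $\perp$ off $\mathbf{u}$ in the integrand and substitute $p\,\mathbf{n}$ from the traction formula; the $\omega$-terms recombine and the integrand collapses to $-(\mathbf{f},\mathbf{u})-2\omega_i(\mathbf{n}^{\perp},\mathbf{u})$ on $\Gamma_i$. Integrating, the first term vanishes by Lemma \ref{lem:BoundaryTermZero}. For the second, $(\mathbf{n}^{\perp},\mathbf{u})=\boldsymbol{\tau}\cdot\mathbf{u}=\boldsymbol{\tau}\cdot\mathbf{v}_i+\omega_i\,\boldsymbol{\tau}\cdot\mathbf{r}^{\perp}=\boldsymbol{\tau}\cdot\mathbf{v}_i+\omega_i(\mathbf{n},\mathbf{r})$, and since $\int_{\Gamma_i}\boldsymbol{\tau}\,ds_{\mathbf{x}}=\oint_{\Gamma_i}d\mathbf{x}=\mathbf{0}$ while $\int_{\Gamma_i}(\mathbf{n},\mathbf{r})\,ds_{\mathbf{x}}=\int_{D_i}\nabla\cdot\mathbf{r}\,dV=2|D_i|$ by the divergence theorem, one gets $\int_{\Gamma_i}(\mathbf{n}^{\perp},\mathbf{u})\,ds_{\mathbf{x}}=2\omega_i|D_i|$, and therefore $\int_{\Gamma_i}\bigl(p(\mathbf{u},\mathbf{n})-\omega(\mathbf{u}^{\perp},\mathbf{n})\bigr)\,ds_{\mathbf{x}}=-4\omega_i^2|D_i|$. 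A more pedestrian alternative avoids Lemma \ref{lem:BoundaryTermZero}: expand the integrand directly with $\mathbf{u}=\mathbf{v}_i+\omega_i\mathbf{r}^{\perp}$ and $\mathbf{u}^{\perp}=\mathbf{v}_i^{\perp}-\omega_i\mathbf{r}$, then use the force balance $\int_{\Gamma_i}\omega\mathbf{n}^{\perp}\,ds_{\mathbf{x}}=\int_{\Gamma_i}p\mathbf{n}\,ds_{\mathbf{x}}$ and the torque balance $\int_{\Gamma_i}\omega(\mathbf{n},\mathbf{r})\,ds_{\mathbf{x}}=4\omega_i|D_i|-\int_{\Gamma_i}p(\mathbf{n}^{\perp},\mathbf{r})\,ds_{\mathbf{x}}$ (obtained by integrating the traction formula, respectively against a constant and against $\mathbf{r}^{\perp}$, and using $\oint_{\Gamma_i}\mathbf{n}^{\perp}=\mathbf{0}$, $\int_{\Gamma_i}(\mathbf{n},\mathbf{r})=2|D_i|$); the $p$-weighted integrals then cancel in pairs, again leaving $-2\omega_i^2\int_{\Gamma_i}(\mathbf{n},\mathbf{r})\,ds_{\mathbf{x}}=-4\omega_i^2|D_i|$.

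\textbf{Main obstacle.} The crux is the identification $\partial_n\mathbf{u}=(\omega-\omega_i)\mathbf{n}^{\perp}$ on $\Gamma_i$: unlike the tangential derivative, the normal derivative is not dictated by the rigid-body boundary values, and it is precisely this relation --- forced by incompressibility together with the definition of the vorticity --- that links the exterior flow to the rigid body and injects the factor $\omega_i$ (and, via one application of the divergence theorem, the volume $|D_i|$) into the final answer. Everything after that is careful sign bookkeeping; the one thing to watch is that the individual $p$-weighted boundary integrals $\int_{\Gamma_i}p\mathbf{n}$ and $\int_{\Gamma_i}p(\mathbf{n}^{\perp},\mathbf{r})$ cannot be evaluated in closed form, so the algebra must be organized (through the force/torque balance, equivalently through Lemma \ref{lem:BoundaryTermZero}) so that they cancel identically, leaving only the geometric term $-4\omega_i^2|D_i|$.
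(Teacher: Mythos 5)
Your argument is, in substance, the paper's own: reduce the integrand on $\Gamma_i$ pointwise to $-(\mathbf{u},\mathbf{f})$ plus a rigid-body term, kill the first piece with Lemma \ref{lem:BoundaryTermZero}, and evaluate the second with the divergence theorem via $\int_{\Gamma_i}\left(\mathbf{x}-\mathbf{x}^c_i,\mathbf{n}\right)ds_{\bx}=2\left|D_i\right|$. Where you go beyond the paper is in justifying the reduction: the paper simply asserts that $e(\mathbf{u})=0$ and $\omega=2\omega_i$ on $\Gamma_i$, which is not true of the trace of the exterior flow (only tangential derivatives are determined by the rigid boundary data); your identification $\partial_n\mathbf{u}=(\omega-\omega_i)\mathbf{n}^{\perp}$ from incompressibility and the definition of $\omega$, together with the algebraic identity $2e(\mathbf{u})\mathbf{n}=2\partial_n\mathbf{u}-\omega\mathbf{n}^{\perp}$ and the resulting traction formula $\mathbf{f}=-p\,\mathbf{n}+(\omega-2\omega_i)\mathbf{n}^{\perp}$, is correct and is the right way to make that step rigorous.

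One sign caveat, which is really the paper's as much as yours: with the stated convention $\mathbf{a}^{\perp}=(-a_2,a_1)$ one has $-\omega(\mathbf{u}^{\perp},\mathbf{n})=+\omega(\mathbf{u},\mathbf{n}^{\perp})$, so substituting your traction formula into the integrand \emph{as printed} leaves $-(\mathbf{f},\mathbf{u})+2(\omega-\omega_i)(\mathbf{u},\mathbf{n}^{\perp})$, and the $\omega$-weighted term does not cancel and is not controlled by the force/torque balances. The clean collapse to $-(\mathbf{f},\mathbf{u})-2\omega_i(\mathbf{u},\mathbf{n}^{\perp})$ that you use occurs for the integrand $p(\mathbf{u},\mathbf{n})+\omega(\mathbf{u}^{\perp},\mathbf{n})$, which is in fact the quantity the paper needs downstream: it is the flux of $p\mathbf{u}+\omega\mathbf{u}^{\perp}$, whose divergence is $-\omega^{2}$, the identity integrated in the following lemma. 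A quick check confirms the printed sign is off: the global rigid motion $\mathbf{u}=\mathbf{v}_i+\omega_i(\mathbf{x}-\mathbf{x}^c_i)^{\perp}$ with $p\equiv 0$ satisfies the lemma's hypotheses and yields $+4\omega_i^{2}\left|D_i\right|$ for the printed integrand but $-4\omega_i^{2}\left|D_i\right|$ for the corrected one. So your ``move the $\perp$ off $\mathbf{u}$'' step contains a compensating sign slip that lands exactly on the intended statement; with the integrand read as $p(\mathbf{u},\mathbf{n})+\omega(\mathbf{u}^{\perp},\mathbf{n})$, both your main route and your ``pedestrian alternative'' (using the force and torque balances to cancel the $p$-weighted integrals) go through as written.
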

\begin{proof}
 On $\Gamma_i$, $e\left(\mathbf{u}\right) = 0$ and $\omega = 2\omega_i$. Using Lemma \ref{lem:BoundaryTermZero} and the
divergence theorem
\begin{equation}
 -\int_{\Gamma_i} \left(\mathbf{u}, \mathbf{f} \right) 
 + 2\omega_i \left(\mathbf{v}_{i}^{\perp}+\omega_{i}\left(\mathbf{x} - 
\mathbf{x}^{c}_i \right),\mathbf{n} \right) ds_{\bx} = -4 \omega_{i}^{2} \left|D_{i} \right|
\end{equation}
\end{proof}

\begin{lem} [adapted from \cite{UniquenessNollFinn}]
If $\mathbf{u}\left(\mathbf{x}\right)$
satisifies equations \eqref{eq:StokesFlowEq}, \eqref{eq:MassConservation},
\eqref{eq:RigidBodyMotion}, \eqref{eq:ForceCondition}, \eqref{eq:TorqueConditions}
and \eqref{eq:NoFlowAtInfty} with $\mathbf{F}_{i}=0$ and $T_{i}=0$, then 
\begin{align}
\lim_{R\to\infty}\int_{\partial B_{R}\left(0\right)} \left(\mathbf{u},\mathbf{f} \right) ds_{\bx}  & \to0 \, . \label{eq:ForceDecayatInfty}
\end{align}
\end{lem}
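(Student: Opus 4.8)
The plan is to reduce \eqref{eq:ForceDecayatInfty} to a decay estimate for the traction on the large circle together with an elementary bound. On $\partial B_{R}(0)$ the outward unit normal is $\mathbf{n}=\mathbf{x}/|\mathbf{x}|$, so $\mathbf{f}=\boldsymbol{\sigma}\cdot\mathbf{n}$ and, by \eqref{StressTensor}, $|\mathbf{f}|\le|p|+2|e(\mathbf{u})|\le|p|+2|D\mathbf{u}|$ there. Pulling out the supremum of $|\mathbf{u}|$,
\[
\left|\int_{\partial B_{R}(0)}\left(\mathbf{u},\mathbf{f}\right)ds_{\bx}\right|\;\le\;\Bigl(\sup_{\partial B_{R}(0)}|\mathbf{u}|\Bigr)\cdot(2\pi R)\cdot\Bigl(\sup_{\partial B_{R}(0)}|\mathbf{f}|\Bigr).
\]
By \eqref{eq:NoFlowAtInfty} we have $\sup_{\partial B_{R}(0)}|\mathbf{u}|\to0$ as $R\to\infty$, so it suffices to show $\sup_{\partial B_{R}(0)}|\mathbf{f}|=O(R^{-1})$, since then the remaining factor $2\pi R\,\sup_{\partial B_{R}(0)}|\mathbf{f}|$ stays bounded.

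To establish the traction estimate I would first recall that, for a Stokes flow, $p$ and $\omega$ are harmonic in $E$ and $\Delta\mathbf{u}=\nabla p=\nabla^{\perp}\omega$ (these follow from \eqref{eq:StokesFlowEq}--\eqref{eq:MassConservation} by taking the divergence and the curl, using $\nabla\cdot\mathbf{u}=0$). The vorticity estimate already established gives $\omega=O(r^{-1})$, and Lemma \ref{lem:pressureVorticityBound} then gives $p=O(r^{-1})$; the interior gradient estimate for harmonic functions (equivalently, term-by-term differentiation of the expansion in Lemma \ref{lem:HarmonicProperty1}) yields $\nabla\omega=O(r^{-2})$, whence $\nabla p=\nabla^{\perp}\omega=O(r^{-2})$. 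Now $\mathbf{u}$ solves the Poisson equation $\Delta\mathbf{u}=\nabla p$ with a bounded right-hand side of size $O(r^{-2})$, while $\mathbf{u}$ itself is bounded; applying the scaled interior gradient estimate on a ball of radius $R/2$ centred at a point of $\partial B_{R}(0)$ (legitimate for $R$ large, since such balls lie in $E$) gives
\[
\sup_{\partial B_{R}(0)}|D\mathbf{u}|\;\le\;\frac{C}{R}\sup_{R/2\le|\mathbf{x}|\le 3R/2}|\mathbf{u}|\;+\;CR\sup_{R/2\le|\mathbf{x}|\le 3R/2}|\nabla p|\;=\;O(R^{-1}).
\]
Combining this with $p=O(r^{-1})$ gives $\sup_{\partial B_{R}(0)}|\mathbf{f}|=O(R^{-1})$, and the bound displayed in the first paragraph then yields \eqref{eq:ForceDecayatInfty}.

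The main obstacle is precisely this last step: \eqref{eq:NoFlowAtInfty} supplies no decay rate for $\mathbf{u}$, so passing from the $O(r^{-2})$ decay of $\nabla p$ to a usable $O(r^{-1})$ bound on $D\mathbf{u}$ genuinely requires interior elliptic regularity for the Stokes system (or for the Poisson problem $\Delta\mathbf{u}=\nabla p$), which is not among the lemmas recorded so far; the remaining ingredients --- the splitting of $\mathbf{f}$ in \eqref{StressTensor}, the harmonicity of $p$ and $\omega$, and the elementary estimate on the circle --- are routine. One can in fact sharpen the rates: since $p-i\omega$ is analytic near infinity and boundedness of $\mathbf{u}$ forces its $z^{-1}$ Laurent coefficient to vanish, one obtains $p,\omega=O(r^{-2})$, $\mathbf{u}=O(r^{-1})$ and $D\mathbf{u}=O(r^{-2})$, but the cruder rates above already suffice. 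I note finally that the hypotheses $\mathbf{F}_{i}=0$, $T_{i}=0$ play no direct role in this particular estimate; they enter the uniqueness argument that follows, where \eqref{eq:ForceDecayatInfty} is combined with the identity $\int_{\partial B_{R}(0)}\left(\mathbf{u},\mathbf{f}\right)ds_{\bx}=2\int_{E\cap B_{R}(0)}|e(\mathbf{u})|^{2}\,dV$ (integration by parts together with Lemma \ref{lem:BoundaryTermZero}) to force $e(\mathbf{u})\equiv0$.
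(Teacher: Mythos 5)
Your argument is correct, but it follows a genuinely different route from the paper. The paper never estimates the traction on $\partial B_R(0)$ by local elliptic regularity; instead it uses the hypotheses $\mathbf{F}_i=\mathbf{0}$, $T_i=0$ through Lemma \ref{lem:BoundaryVortEst} to derive the vorticity--energy identity \eqref{eq:VolVortEst}, combines it with Lemma \ref{lem:InnerProductBound} to conclude $\int_E\omega^2\,dV<\infty$, and then uses the harmonic expansion of Lemma \ref{lem:HarmonicProperty1} to kill the $r^{-1}$ mode of $\omega$; this yields $\omega=O(r^{-2})$ (indeed $\omega\equiv 0$ from the identity), hence $p=O(r^{-2})$ by Lemma \ref{lem:pressureVorticityBound}, $\mathbf{u}$ harmonic, and $\mathbf{f}=O(r^{-2})$ via Lemma \ref{lem:growth}, after which \eqref{eq:ForceDecayatInfty} follows exactly as in your final step. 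Your proof instead stays entirely local: from $\omega=O(r^{-1})$ and $p=O(r^{-1})$ you get $\nabla p=\nabla^{\perp}\omega=O(r^{-2})$ by interior gradient estimates for harmonic functions, and then the scaled interior estimate for $\Delta\mathbf{u}=\nabla p$ gives $D\mathbf{u}=O(R^{-1})$, hence $\mathbf{f}=O(R^{-1})$, which together with $\sup_{\partial B_R(0)}|\mathbf{u}|\to 0$ and the factor $2\pi R$ closes the argument. What your route buys is generality and economy of hypotheses --- as you correctly note, $\mathbf{F}_i=\mathbf{0}$ and $T_i=0$ are not needed, only the decay condition \eqref{eq:NoFlowAtInfty} --- at the price of importing an interior gradient estimate for the Poisson (or Stokes) system that is not among the paper's recorded lemmas; you flag this honestly, and it is standard, so the gap is one of toolkit rather than of logic. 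What the paper's route buys is that it uses only the lemmas already stated, it delivers the sharper decay $\mathbf{f}=O(r^{-2})$ and in fact $\omega\equiv 0$, and it keeps the argument self-contained in preparation for the energy identity used in Lemma \ref{Uniqueness mobility}, which is exactly the identity you mention in your closing remark.
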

\begin{proof}
For large enough $R$, Lemma \ref{lem:BoundaryVortEst} yields
\begin{align}
\int_{E\cap B_{R}\left(0\right)} \omega ^2 dV & = 
\sum_{i=1}^{N} \int_{\Gamma_i} p\left(\mathbf{u}, \mathbf{n}\right) - \omega \left(\mathbf{u}^{\perp},\mathbf{n} \right) ds_{\bx} \nonumber \\
&- \int_{\partial B_{R}\left(0\right)} p\left(\mathbf{u}, \mathbf{n}\right) - \omega \left(\mathbf{u}^{\perp},\mathbf{n} \right) ds_{\bx} \\
& =-4\sum_{i=1}^{N}\omega_{i}^{2} \left|D_{i} \right| -\int_{\partial B_{R}\left(0\right)}p\left(\mathbf{u}, \mathbf{n}\right) - \omega \left(\mathbf{u}^{\perp},\mathbf{n} \right) ds_{\bx} \, \label{eq:VolVortEst}.
\end{align}
Using Lemma \ref{lem:InnerProductBound}, we conclude that 
\begin{equation}
\int_{E} \omega ^2 dV < \infty \, .
\end{equation}
Using Lemma \ref{lem:HarmonicProperty1}, we know that
\begin{equation}
\omega (r,\theta) = r^{-1} a_1 (\theta) + O\left(r^{-2}\right) \, .
\end{equation}
Integrating $\omega^{2}$ in the annulus $B = B_{r} \left(0\right) \cap B_{\bar{R}} \left(0\right)^{C}$, we get
\begin{equation}
\int_{B} \omega^2 dV = \log{\left( \frac{r}{\bar{R}} \right)} \int_{0}^{2\pi} a_{1}^2(\theta) d\theta + O\left(r^{-2}\right) \, .
\end{equation}
Since $\int_{B} \omega^2 dV$ is bounded, we conclude that $a_{1} \equiv 0$ and that 
$\omega = O\left(r^{-2}\right)$. 
Using Lemma \ref{lem:pressureVorticityBound}, we 
conclude that $p=O\left(r^{-2}\right)$. Thus
\begin{equation}
\int_{\partial B_{R}\left(0\right)} \left[ -p\left( \mathbf{u} , \mathbf{n} \right) + \omega \left(\mathbf{u}^{\perp},\mathbf{n} \right) \right] ds_{\bx} \to 0\;\; \text{as} \;\; R\to\infty \, .
\end{equation}   
From equation \eqref{eq:VolVortEst}, it follows that $\omega\equiv0$ in $E$. Using equation \eqref{eq:MassConservation}, we conclude
that $\Delta \mathbf{u} = 0$ in $E$. Using the estimate for $p$ and Lemma \ref{lem:growth}, we get $\mathbf{f} = O\left(r^{-2}\right)$.
Using this estimate and the decay condition in $\mathbf{u}$ at $\infty$, the result follows.
\end{proof}
The following lemma is a modification of the standard proof of uniqueness
for Stokes flow \cite{kim2005microhydrodynamics,pozrikidis1992boundary}.

\begin{lem}
\label{Uniqueness mobility} If $\mathbf{u}\left(\mathbf{x}\right)$
satisifies equations \eqref{eq:StokesFlowEq}, \eqref{eq:MassConservation},
\eqref{eq:RigidBodyMotion}, \eqref{eq:ForceCondition}, \eqref{eq:TorqueConditions}
and \eqref{eq:NoFlowAtInfty} with $\mathbf{F}_{i}=0$ and $T_{i}=0$,
then $\mathbf{u}\left(\mathbf{x}\right) \equiv 0$.
\end{lem}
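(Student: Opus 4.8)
The plan is to mimic the classical energy argument for uniqueness of Stokes flow, but exploiting the decay estimates established in the preceding lemmas to control the boundary term at infinity. First I would start from the identity obtained by integrating $\nabla\cdot(\boldsymbol{\sigma}\cdot\mathbf{u})$ over the truncated domain $E\cap B_R(0)$. Since $\mathbf{u}$ is divergence free and solves the Stokes equations, this yields
\begin{equation}
2\int_{E\cap B_R(0)} e(\mathbf{u}):e(\mathbf{u})\,dV = \int_{\partial B_R(0)}(\mathbf{u},\boldsymbol{\sigma}\cdot\mathbf{n})\,ds_{\bx} - \sum_{i=1}^N\int_{\Gamma_i}(\mathbf{u},\mathbf{f})\,ds_{\bx}\, ,
\end{equation}
where the sign on the $\Gamma_i$ terms reflects that $\mathbf{n}$ is the outward normal of $D_i$, hence inward for $E$. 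By Lemma \ref{lem:BoundaryTermZero}, each term $\int_{\Gamma_i}(\mathbf{u},\mathbf{f})\,ds_{\bx}$ vanishes because $\mathbf{F}_i=\mathbf{0}$ and $T_i=0$. The previous lemma (equation \eqref{eq:ForceDecayatInfty}) shows the remaining boundary integral $\int_{\partial B_R(0)}(\mathbf{u},\mathbf{f})\,ds_{\bx}\to 0$ as $R\to\infty$. Here $\mathbf{f}=\boldsymbol{\sigma}\cdot\mathbf{n}$ on $\partial B_R(0)$ as well.

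Next I would pass to the limit $R\to\infty$. The left-hand side is monotone increasing in $R$, so by the monotone convergence theorem it converges to $2\int_E e(\mathbf{u}):e(\mathbf{u})\,dV$, which the above forces to equal zero. Hence $e(\mathbf{u})\equiv 0$ throughout $E$. A symmetric gradient that vanishes on a connected open set means $\mathbf{u}$ is an infinitesimal rigid motion, $\mathbf{u}(\mathbf{x}) = \mathbf{a} + b\,\mathbf{x}^{\perp}$ for constants $\mathbf{a}\in\bR^2$, $b\in\bR$. (If $E$ is not connected one applies this on each connected component, but the exterior of finitely many bounded bodies has a single unbounded component, which is the relevant one.)

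Finally I would invoke the decay condition \eqref{eq:NoFlowAtInfty}: a rigid motion $\mathbf{a}+b\,\mathbf{x}^{\perp}$ tends to $\mathbf{0}$ as $|\mathbf{x}|\to\infty$ only if $\mathbf{a}=\mathbf{0}$ and $b=0$, so $\mathbf{u}\equiv\mathbf{0}$ in $E$. (For completeness one notes that $\nabla p = \Delta\mathbf{u} = \mathbf{0}$ then forces $p$ constant, and the decay $p=O(r^{-2})$ established in the previous lemma forces $p\equiv 0$, though the statement only claims $\mathbf{u}\equiv\mathbf{0}$.) The main obstacle, already handled by the chain of lemmas leading up to this one, is the control of the boundary term at infinity: $\mathbf{u}$ itself only decays like $O(r^{-1})$ a priori while $\mathbf{f}$ must be shown to decay like $O(r^{-2})$ so that the product integrates to something vanishing on circles of radius $R$; this is exactly the content of the preceding lemma, so in the present proof it can simply be cited. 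The remaining care is bookkeeping of signs in the Green's identity and the observation that $e(\mathbf{u})\equiv 0$ implies the rigid-motion form of $\mathbf{u}$.
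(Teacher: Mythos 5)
Your proposal is correct and follows essentially the same route as the paper: the energy identity over $E\cap B_R(0)$ (the paper derives it by integrating $\langle D\mathbf{u},e(\mathbf{u})\rangle$ by parts rather than from $\nabla\cdot(\boldsymbol{\sigma}\cdot\mathbf{u})$, but it is the same identity), with the inner boundary terms killed by Lemma \ref{lem:BoundaryTermZero} and the outer term killed by \eqref{eq:ForceDecayatInfty}, then $e(\mathbf{u})\equiv 0$ gives a rigid motion which the decay condition \eqref{eq:NoFlowAtInfty} forces to vanish.
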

\begin{proof}
Let $\left\langle \cdot,\cdot\right\rangle :\bR^{2\times2}\times\bR^{2\times2}$
be the Frobenius inner product. For large enough $R$,
\begin{align*}
\int_{E\cap B_{R}\left(0\right)}\left\langle e\left(\mathbf{u}\right),e\left(\mathbf{u}\right)\right\rangle dV & =\int_{E\cap B_{R}\left(0\right)}\left\langle D\mathbf{u},e\left(\mathbf{u}\right)\right\rangle dV\\
 & \hspace{-.25in} =\int_{\partial(E\cap B_{R}\left(0\right))}\left(\mathbf{u},e\left(\mathbf{u}\right)\cdot\mathbf{n}\right)ds_{\bx}-\frac{1}{2}\int_{E\cap B_{R}\left(0\right)}\left(\mathbf{u},\Delta\mathbf{u}\right)dV\\
 & \hspace{-.25in} =\int_{\partial(E\cap B_{R}\left(0\right))}\left(\mathbf{u},e\left(\mathbf{u}\right)\cdot\mathbf{n}\right)ds_{\bx}-\frac{1}{2}\int_{E\cap B_{R}\left(0\right)}\left(\mathbf{u},\nabla p\right)dV\\
 & \hspace{-.25in} =\frac{1}{2}\int_{\partial\left(E\cap B_{R}\left(0\right)\right)}\left(\mathbf{u},\left(-p\left[\begin{array}{cc}
1 & 0\\
0 & 1
\end{array}\right]+2e\left(\mathbf{u}\right)\right).\mathbf{n}\right)ds_{\bx}\quad \\
 & \hspace{-.25in} =-\frac{1}{2}\sum_{i=1}^{N}\int_{\Gamma_{i}}\left(\mathbf{u},\mathbf{f}\right)ds_{\bx}+\frac{1}{2}\int_{\partial B_{R}\left(0\right)}\left(\mathbf{u},\mathbf{f}\right)ds_{\bx}\\
 & \hspace{-.25in} =\frac{1}{2}\int_{\partial B_{R}\left(0\right)}\left(\mathbf{u},\mathbf{f}\right) ds_{\bx} \, .
\end{align*}
using \eqref{eq:MassConservation} and Lemma \ref{lem:BoundaryTermZero}.
Taking the limit as $R\to\infty$ in the above expression and using
equation \eqref{eq:ForceDecayatInfty},
we get 
\begin{equation}
e\left(\mathbf{u}\right)\equiv\left[\begin{array}{cc}
0 & 0\\
0 & 0
\end{array}\right] \quad \mathbf{x} \in E \, .
\end{equation}
Thus, $\mathbf{u}$ is a rigid body motion. 
However since $\mathbf{u}(\bx) \to\mathbf{0}$
as $\left|\bx\right| \to \infty$, we conclude that $\mathbf{u}\equiv \mathbf{0}$.
\end{proof}

We construct an integral representation for the mobility problem by direct analogy
with the elastance problem, with
the velocity $\mathbf{u}(\mathbf{x})$ playing the role of the 
potential and surface traction $\mathbf{f}$ playing the role of charge
in the elastance problem. (A rigid body has no interior strain or stress, 
with all the stress residing on the surface.) 
We first construct an ``incident" field which satisfies the net force and torque 
conditions on each rigid body but which does not correspond to a rigid body motion.
We then find a ``scattered" velocity induced by an additional force vector 
$\mu$ so that the total velocity will satisfy
\eqref{eq:RigidBodyMotion} but does not change the net
force and torque.  As in the elastance problem,
this can be thought of as a redistribution of the surface force.

\subsection{Mathematical preliminaries}

In the remainder of this paper, we will use the Einstein summation convention.
As above, we let $\gamma$ be a smooth closed curve in $\bR^{2}$ and we 
let $D^{\mp}$ denote the domains corresponding to the interior and exterior 
of $\gamma$. 
$\mathbf{n}_{\bx} = (n_{\bx,1}, n_{\bx,2})$ 
will be used to denoted the unit outward normal at
$\bx \in \gamma$ and $\mathbf{n}_{0} = (n_{0,1}, n_{0,2})$ to denote the
unit outward normal at $\bx_{0}\in\gamma$. 
We let $\bmu(\bx) = (\mu_{1}(\bx), \mu_{2}(\bx)):\gamma\to\bR^2$ 
be a continuous function. 

Following the treatment of \cite{kim2005microhydrodynamics, pozrikidis1992boundary},
the fundamental solution to the Stokes equations (the Stokeslet) in free space 
is given by
\begin{equation}
G_{i,j}\left(\mathbf{x},\mathbf{y}\right)=\frac{1}{4\pi}\left[ -\log\left|\mathbf{x}-\mathbf{y}\right| \delta_{ij} + \frac{\left( x_{i} - y_{i} \right) \left( x_{j} - y_{j}
 \right)} {\left| \mathbf{x} - \mathbf{y} \right|^2}\right] \quad i,j \in {1,2} \, .
\end{equation}
The Stokeslet allows us to express the velocity field 
$\mathbf{u} = \left(u_{1} , u_{2} \right)$ 
induced by 
a point force $\mathbf{f} = \left(f_{1}, f_{2} \right)$ in the form
\begin{equation}
u_{i} = G_{i,j} \left(\mathbf{x},\mathbf{y}\right) f_j \, .
\end{equation}
The single layer Stokes potential is the velocity induced
by a surface force on a boundary $\gamma$:
\begin{equation}
\mathcal{S}_{\gamma}\mathbf{\bmu}\left(\mathbf{x}\right)_{i}=\int_{\gamma} G_{i,j} \left(\mathbf{x},\mathbf{y}\right)\mu_{j}\left(\mathbf{y}\right)ds_{\by} \quad
{\rm for}\ i=1,2 \, .
\label{stokesslpdef}
\end{equation}

\begin{lem}
Let 
$\mathcal{S}_{\gamma}\mathbf{\bmu}(\bx)$ denote a single layer Stokes potential 
of the form \eqref{stokesslpdef}. Then,
$\mathcal{S}_{\gamma}\bmu\left(\mathbf{x}\right)$ satisfies the Stokes equations
in $\bR^{2}\backslash\gamma$ and
$\mathcal{S}_{\gamma}\bmu(\mathbf{x})$ is continuous
in $\bR^{2}$.
Moreover, if we let
$\mathbf{f}(\mathbf{x}_0)$ denote the surface traction on $\gamma$ corresponding to 
the velocity field $\mathcal{S}_{\gamma}\bmu\left(\mathbf{x}\right)$, then
\begin{equation}
\lim_{\substack{\mathbf{x}\to\mathbf{x}_{0}\\ \mathbf{x}\in D^{\pm}}} f_{i,\pm} 
\left(\mathbf{x}_0 \right)=\mp\frac{1}{2}\mu_i\left(\mathbf{x}_{0}\right)+n_{0,k}\oint_{\gamma} \mathbf{T}_{i,j,k} \left(\mathbf{x}_{0},\mathbf{y}\right) \mu_{j} \left(\mathbf{y}\right)ds_{\by} \label{SLnormaljumpStokes}
\end{equation}
 where $\mathbf{T}_{i,j,k} \left(\mathbf{x},\mathbf{y}\right) $ is the 
stresslet corresponding to the flow given by
\begin{equation}
\mathbf{T}_{i,j,k} \left(\mathbf{x},\mathbf{y}\right) = -\frac{1}{\pi} \frac{\left( x_{i} - y_{i} \right) \left( x_{j} - y_{j} \right) \left( x_{k} - y_{k} \right)}{\left| \mathbf{x} - \mathbf{y} \right|^{4}} \, .
\end{equation} 
The notation $\oint_\gamma$ is used, as above, to denote the principal value 
integral.
The net force and torque on the domain are given by 
\begin{equation}
\int_{\gamma}\mathbf{f}_{+} ds_{\bx} =
-\int_{\gamma}\mathbf{\bmu}\left(\bx\right)ds_{\bx} \, ,\quad
\int_{\gamma}\mathbf{f}_{-} \, ds_{\bx} = \mathbf{0}
\label{eq:SLGaussLaw1StokesForce}
\end{equation}
and
\begin{align}
\int_{\gamma}\left( \left(\bx-
\mathbf{x}^{c}\right)^{\perp} ,\mathbf{f} \right)_{+} ds_{\bx}  & =
-\int_{\gamma}\left( \left(\bx-\mathbf{x}^{c}\right)^{\perp} ,
\bmu \right)ds_{\bx}\label{eq:SLGaussLaw1StokesTorque} \, ,\\
\int_{\gamma}\left( \left(\bx-\mathbf{x}^{c}\right)^{\perp} ,
\mathbf{f} \right)_{-} ds_{\bx}  & =0 \label{eq:SLGaussLaw12StokesTorque}\, .
\end{align}
If
$\omega$ is a closed curve in $D^{+}$, then
\begin{align}
\int_{\omega} \mathbf{f} \, ds_{\bx} & = \mathbf{0} \label{eq:SLGaussLaw2StokesForce}
\\
\int_{\omega} \left( \left(\bx-
\mathbf{x}^{c}\right)^{\perp} ,\mathbf{f} \right) ds_{\bx} & =0 \, .
\label{eq:SLGaussLaw2StokesTorque}
\end{align}
Finally,
\begin{equation}
\left|\mathcal{S}_{\gamma}\mathbf{\bmu}\left(\mathbf{x}\right)+
\frac{1}{4\pi} \left[\log\left(\mathbf{x}\right)\left[\begin{array}{cc}
1 & 0\\
0 & 1
\end{array}\right] - \frac{\mathbf{R}}{\left|x\right|^2} \right] \int_{\gamma}
\bmu\left(\mathbf{y}\right)ds_{\by}\right|
\to 0
\label{eq:SLGrowthAtInftyStokes}
\end{equation}
as $\left|\mathbf{x}\right|\to\infty$, where
\begin{equation}
\mathbf{R} =
\left[\begin{array}{cc} 
x_{1}^2 & x_{1} x_{2}\\
x_{1} x_{2} & x_{2}^{2}
\end{array}\right] \, .
\end{equation}
\end{lem}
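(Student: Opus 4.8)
The plan is to treat this lemma as the Stokes-flow counterpart of the single-layer Laplace lemma stated above, proving each assertion by the classical argument for the Stokes operator and quoting the standard local analysis from \cite{kim2005microhydrodynamics,pozrikidis1992boundary} wherever a direct computation would be lengthy. Since the Stokeslet $G_{i,j}(\cdot,\by)$, together with its associated pressure, solves the Stokes equations for $\bx\neq\by$, differentiating under the integral sign shows that $\mathcal{S}_\gamma\bmu$ (with the corresponding single-layer pressure) satisfies \eqref{eq:StokesFlowEq}--\eqref{eq:MassConservation} in $\bR^2\setminus\gamma$; and because $G_{i,j}$ has only a logarithmic singularity on the compact curve $\gamma$, it is weakly singular, so $\mathcal{S}_\gamma\bmu$ is continuous across $\gamma$ for continuous $\bmu$, exactly as for the Laplace single layer. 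For the traction jump \eqref{SLnormaljumpStokes}, forming the stress tensor of the velocity field $\mathcal{S}_\gamma\bmu$ produces the stresslet kernel $\mathbf{T}_{i,j,k}$, and the behavior of $n_{0,k}\int_\gamma \mathbf{T}_{i,j,k}(\bx,\by)\mu_j(\by)\,ds_\by$ as $\bx\to\bx_0$ from either side is governed by the same principal-value/removable-singularity analysis that gives the $\mp\tfrac12$ jump for the adjoint double layer; I would quote this (it is essentially Odqvist's theorem) rather than rederive it, since its local estimate is the only genuinely technical point.

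The force and torque identities all follow from the divergence theorem applied to the Stokes stress tensor. Away from $\gamma$, momentum balance gives $\partial_j\sigma_{ij}=0$ and $\boldsymbol{\sigma}$ is symmetric, so for any closed curve bounding a region $\Omega\subset\bR^2\setminus\gamma$ we get $\int_{\partial\Omega}\sigma_{ij}n_j\,ds=\int_\Omega\partial_j\sigma_{ij}\,dV=0$ and, writing the torque density about $\mathbf{x}^c$ as $\epsilon_{ij}(x_i-x^c_i)\sigma_{jk}n_k$, $\int_{\partial\Omega}\epsilon_{ij}(x_i-x^c_i)\sigma_{jk}n_k\,ds=\int_\Omega\bigl(\epsilon_{ij}\sigma_{ji}+\epsilon_{ij}(x_i-x^c_i)\partial_k\sigma_{jk}\bigr)\,dV=0$, the first term vanishing by symmetry of $\boldsymbol{\sigma}$ and the second by $\partial_k\sigma_{jk}=0$. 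Taking $\Omega=D^-$ (whose interior is source free) yields $\int_\gamma\mathbf{f}_-\,ds=\mathbf{0}$ and $\int_\gamma((\bx-\mathbf{x}^c)^\perp,\mathbf{f})_-\,ds=0$, and taking $\Omega$ to be a region bounded by $\omega\subset D^+$ gives \eqref{eq:SLGaussLaw2StokesForce}--\eqref{eq:SLGaussLaw2StokesTorque}. For the exterior limits, subtracting the two one-sided relations in \eqref{SLnormaljumpStokes} gives $\mathbf{f}_+-\mathbf{f}_-=-\bmu$ pointwise on $\gamma$; integrating this against $1$ and against $(\bx-\mathbf{x}^c)^\perp$ and adding the interior identities just obtained produces the net force $-\int_\gamma\bmu\,ds$ and net torque $-\int_\gamma((\bx-\mathbf{x}^c)^\perp,\bmu)\,ds$ of \eqref{eq:SLGaussLaw1StokesForce}--\eqref{eq:SLGaussLaw1StokesTorque}.

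Finally, the far-field statement \eqref{eq:SLGrowthAtInftyStokes} comes from expanding the Stokeslet about $\by=0$: uniformly for $\by$ on the compact curve $\gamma$, $-\log|\bx-\by|=-\log|\bx|+O(|\bx|^{-1})$ and $(x_i-y_i)(x_j-y_j)/|\bx-\by|^2=x_ix_j/|\bx|^2+O(|\bx|^{-1})$, so $\mathcal{S}_\gamma\bmu(\bx)_i=\frac{1}{4\pi}\bigl(-\log|\bx|\,\delta_{ij}+x_ix_j/|\bx|^2\bigr)\int_\gamma\mu_j\,ds_\by+O(|\bx|^{-1})$, which is exactly the claimed asymptotics with $\mathbf{R}=\bx\bx^T$. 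The main obstacle in the whole lemma is the traction jump relation \eqref{SLnormaljumpStokes}; every other assertion is a short divergence-theorem or Taylor-expansion computation, which I would present tersely with the standard references supplied for the jump.
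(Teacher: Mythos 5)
Your proposal is correct and takes essentially the paper's approach: the paper states this lemma without proof, deferring to the standard Stokes potential theory of \cite{kim2005microhydrodynamics,pozrikidis1992boundary}, and your sketch is exactly that classical argument --- quote the Odqvist-type traction jump \eqref{SLnormaljumpStokes}, derive the force and torque identities from the divergence theorem applied to the symmetric, divergence-free stress tensor combined with $\mathbf{f}_{+}-\mathbf{f}_{-}=-\bmu$, and obtain the far-field behavior by expanding the Stokeslet uniformly for $\by\in\gamma$. The only point to make explicit is that in \eqref{eq:SLGaussLaw2StokesForce}--\eqref{eq:SLGaussLaw2StokesTorque} the curve $\omega$ must bound a region contained in $D^{+}$ (i.e.\ not enclose $\gamma$), which is the reading you implicitly adopt and the way the lemma is applied in the paper.
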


The double layer Stokes potential is the velocity field due to a surface
density of stresslets on the curve:
\begin{equation}
\mathcal{D}_{\gamma}\bmu\left(\mathbf{x}\right)_{i}=\int_{\gamma}
\mathbf{T}_{j,i,k}\left(\mathbf{y},\mathbf{x} \right) \mathbf{\mu}_{j} \left(\mathbf{y}\right) n_{\by,k} \, ds_{\by} \, .
\label{stokesdlpdef}
\end{equation}

\begin{lem}
Let $\mathcal{D}_{\gamma}\bmu\left(\mathbf{x}\right)$ denote a double layer Stokes
potential of the form \eqref{stokesdlpdef}. Then,
$\mathcal{D}_{\gamma}{\bmu}\left(\mathbf{x}\right)$ satisfies the
Stokes equation in $\bR^{2}\backslash\gamma$ and the jump relations:
\begin{equation}
\lim_{\substack{\mathbf{x}\to\mathbf{x}_{0}\\
\mathbf{x}\in D^{\pm}}} \mathcal{D}_{\gamma}\mathbf{\mu}_{i} =
\pm\frac{1}{2}\mathbf{\mu}_{i}
\left(\mathbf{x}_{0}\right)+\oint_{\gamma}\mathbf{T}_{j,i,k} \left(\mathbf{y}, \mathbf{x}_{0}\right) {\mu_j}\left(\mathbf{y}\right) n_{\by,k}  \, ds_{\by} \, . \label{eq:DLP0Stokes}
\end{equation}
Furthermore,
\begin{align}
\int_{\gamma}\mathbf{T}_{i,j,k}\left(\mathbf{y},\mathbf{x} \right) n_{\by,k} \, ds_{\by} & =
\begin{cases}
-\delta_{ij} & \quad\mathbf{x}\in D\\
0 & \quad\mathbf{x}\in E
\end{cases} \, ,\label{eq:DLP1Stokes}\\
\oint_{\gamma}\mathbf{T}_{i,j,k}\left(\mathbf{y},\mathbf{x} \right) n_{\by,k} \, ds_{\by} & =
-\frac{\delta_{ij}}{2}\quad\mathbf{x}\in\gamma \, .\label{eq:DLP2Stokes}
\end{align}
Letting $\epsilon_{ilj}$ be the standard Levi-Civita symbol, 
\begin{align}
\int_{\gamma}\epsilon_{ilm}y_{l} \mathbf{T}_{m,j,k}\left(\mathbf{y},\mathbf{x} \right) n_{\by,k} \, ds_{\by} 
& =\begin{cases}
-\epsilon_{ilj}x_{l} & \quad\mathbf{x}\in D\\
0 & \quad\mathbf{x}\in E
\end{cases} \, , \label{eq:DLP1StokesTorque}\\
\oint_{\gamma}\epsilon_{ilm} y_{l} \mathbf{T}_{m,j,k}\left(\mathbf{y},\mathbf{x} \right) n_{\by,k} \, ds_{\by} & 
=-\frac{\epsilon_{ilj}x_{l}}{2}\quad\mathbf{x}\in\gamma \, . \label{eq:DLP2StokesTorque}
\end{align}
Finally,
\begin{equation}
\left|\mathcal{D}_{\gamma}\mathbf{\bmu}\left(\mathbf{x}\right)\right|\to0\quad\mbox{as }\left|\mathbf{x}\right|\to\infty \, . \label{eq:DLGrowthAtInftyStokes}
\end{equation}
\end{lem}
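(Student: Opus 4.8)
The plan is to read off each assertion from the classical theory of Stokes hydrodynamic potentials, in close parallel with the Laplace double layer lemmas established earlier. Two of the claims are immediate. For each fixed $\by\in\gamma$, the double-layer kernel $\mathbf{T}_{j,i,k}(\by,\bx)n_{\by,k}$, regarded as a function of the field point $\bx$, is the velocity field (with an explicitly computable associated pressure) of a stresslet singularity located at $\by$, hence a Stokes flow; since this kernel is smooth for $\bx\notin\gamma$, differentiation under the integral sign shows that $\mathcal{D}_{\gamma}\bmu$ satisfies \eqref{eq:StokesFlowEq}--\eqref{eq:MassConservation} in $\bR^{2}\setminus\gamma$. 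The decay \eqref{eq:DLGrowthAtInftyStokes} follows because $|\mathbf{T}_{i,j,k}(\bx,\by)|=O(|\bx|^{-1})$ as $|\bx|\to\infty$, uniformly for $\by$ on the bounded curve $\gamma$, so that $|\mathcal{D}_{\gamma}\bmu(\bx)|=O(|\bx|^{-1})\to 0$.

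For the Gauss-type identities \eqref{eq:DLP1Stokes} and \eqref{eq:DLP1StokesTorque} I would invoke the boundary-integral (Green's) representation formula for Stokes flow --- equivalently, the Lorentz reciprocal identity between a flow $\mathbf{u}$ in $D$ and the Stokeslet --- applied to the trivial rigid-body flows: the constant translations $\mathbf{u}\equiv\mathbf{e}_{m}$ and the rigid rotation $\mathbf{u}(\bx)=\bx^{\perp}$. Each of these has vanishing strain rate, hence (taking the pressure to be $0$) vanishing surface traction, so the single-layer term in the representation formula drops out and only the double-layer term survives. Evaluating the interior-domain formula at an interior point gives the ``$-\delta_{ij}$'' (respectively ``$-\epsilon_{ilj}x_{l}$'') branch, and evaluating the same formula at a point of $E$ gives the ``$0$'' branch; the antisymmetry $\mathbf{T}_{i,j,k}(\by,\bx)=-\mathbf{T}_{i,j,k}(\bx,\by)$ then recasts the answer in the stated form. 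The representation formula itself I would simply cite from \cite{kim2005microhydrodynamics,pozrikidis1992boundary}.

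The jump relations \eqref{eq:DLP0Stokes} and the principal-value identities \eqref{eq:DLP2Stokes}, \eqref{eq:DLP2StokesTorque} are the technical heart, and I would handle them exactly as for the Laplace double layer. Fix $\bx_{0}\in\gamma$ and split $\mu_{j}(\by)=\mu_{j}(\bx_{0})+(\mu_{j}(\by)-\mu_{j}(\bx_{0}))$. The constant part contributes $\mu_{j}(\bx_{0})\int_{\gamma}\mathbf{T}_{j,i,k}(\by,\bx)n_{\by,k}\,ds_{\by}$, whose one-sided limits are $-\mu_{i}(\bx_{0})$ from $D^{-}$ and $0$ from $D^{+}$ by \eqref{eq:DLP1Stokes}; writing each as $\pm\frac{1}{2}\mu_{i}(\bx_{0})$ plus the common on-curve direct value $-\frac{1}{2}\mu_{i}(\bx_{0})$ produces simultaneously the $\pm\frac{1}{2}\mu_{i}(\bx_{0})$ jump and the principal-value identity \eqref{eq:DLP2Stokes} (and, using the rotation field, \eqref{eq:DLP2StokesTorque}). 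For the remaining integral the density vanishes at $\bx_{0}$, and here I would use that on a smooth curve $|(\by-\bx_{0})\cdot\mathbf{n}_{\by}|\le C|\by-\bx_{0}|^{2}$, which keeps the kernel $\mathbf{T}_{j,i,k}(\by,\bx)n_{\by,k}$ bounded on $\gamma$ near $\bx_{0}$ and only weakly singular as $\bx\to\bx_{0}$ off the curve, so that the classical argument yields a single continuous limit across $\gamma$ equal to the principal-value integral. The main obstacle I anticipate is precisely this limiting argument, together with the careful bookkeeping of the $\pm\frac{1}{2}$ constant against the sign conventions for $D^{\pm}$ and for the representation formula; everything else is routine given the cited Stokeslet identities.
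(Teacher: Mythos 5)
The paper itself offers no proof of this lemma: it is quoted as part of the classical potential theory for the two-dimensional Stokes equations, following \cite{kim2005microhydrodynamics,pozrikidis1992boundary}, exactly as the analogous Laplace lemma is quoted from \cite{guenther1988partial,kress1999linear,mikhlin1964integral}. Your sketch reconstructs the standard argument found in those references: the stresslet kernel is a Stokes flow in $\bx$ away from $\by$, so differentiation under the integral gives the PDE; the $O(|\bx|^{-1})$ decay of $\mathbf{T}$ gives \eqref{eq:DLGrowthAtInftyStokes}; the Gauss-type identities \eqref{eq:DLP1Stokes} and \eqref{eq:DLP1StokesTorque} follow from the Green/Lorentz representation applied to the traction-free rigid-body flows $\mathbf{e}_m$ and $\bx^{\perp}$ (using the antisymmetry of $\mathbf{T}$ in its arguments); and the jump relation follows from the H\"{o}lder splitting $\mu_j(\by)=\mu_j(\bx_0)+(\mu_j(\by)-\mu_j(\bx_0))$ together with the estimate $|(\by-\bx_0)\cdot\mathbf{n}_{\by}|\le C|\by-\bx_0|^{2}$ on a smooth curve. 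This is consistent with what the paper relies on.

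One step in your outline is stated too glibly. The on-curve identities \eqref{eq:DLP2Stokes} and \eqref{eq:DLP2StokesTorque} do not come out ``simultaneously'' from rewriting the one-sided limits of the constant-density contribution as $\pm\frac{1}{2}\mu_i(\bx_0)$ plus a ``common direct value'': knowing that the constant-density double layer equals $-\delta_{ij}$ inside $D$ and $0$ outside does not, by itself, determine its value when $\bx$ lies \emph{on} $\gamma$, and using that value to define the $\pm\frac12$ split is circular unless it is computed independently. The routine fix is the usual indentation argument: replace the arc of $\gamma$ near $\bx_0$ by a small semicircle, apply \eqref{eq:DLP1Stokes} (resp.\ \eqref{eq:DLP1StokesTorque}) to the indented contour, and check that the semicircular arc contributes $-\tfrac12\delta_{ij}$ (resp.\ $-\tfrac12\epsilon_{ilj}x_{l}$) in the limit, the factor $\tfrac12$ coming from the tangent-line approximation at a smooth boundary point; since the kernel restricted to a smooth curve is in fact bounded, this identifies the direct (principal-value) integral. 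With that ingredient supplied, your bookkeeping of the $\pm\frac12$ constants and the continuity argument for the regularized part are correct, and the proof is complete.
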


\subsection{Applying the net force and torque as an incident field \label{sec:intmob}}

We construct a velocity field $\mathbf{u}_{inc}\left(\mathbf{x}\right)$ 
in the exterior domain $E$, 
due to set of surface force densities 
$\left\{ {\brho}_{j}\left(\mathbf{x}\right)\right\} _{j=1}^{N}$
on the boundaries $\left\{ \Gamma_{j}\right\} _{j=1}^{N}$, 
which satisfies the force and torque constraints 
\eqref{eq:ForceCondition} and \eqref{eq:TorqueConditions}.
Each ${\brho}_j$ is a vector density 
${\brho}_j =\left(\rho_{1,j},\rho_{2,j} \right)$. 
Letting ${\brho} \left(\mathbf{x}\right)=
\left(\rho_{1,1}\left(\mathbf{x}\right),\rho_{2,1}\left(\mathbf{x}\right)
\ldots\rho_{1,N}\left(\mathbf{x}\right), \rho_{2,N}\left(\mathbf{x}\right)\right)$,
we define
\begin{align}
\mathbf{u}_{inc}\left(\mathbf{x}\right) & 
=\mathcal{S}_{\Gamma}{\brho}\left(\mathbf{x}\right) \, ,
\label{eq:IncRepresentationStokes}
\end{align}
where $\mathcal{S}_{\Gamma}$
is the operator given by
\begin{align}
\mathcal{S}_{\Gamma}{\brho}\left(\mathbf{x}\right)_{i} & 
= \sum_{j=1}^{N} \mathcal{S}_{\Gamma_{j}}{\brho}_{j}\left(\mathbf{x}\right)_{i}=
\sum_{j=1}^{N}\int_{\Gamma_{j}}G_{i,k}\left(\mathbf{x},\mathbf{y}\right)\mathbf{\rho}_{k,j}\left(\mathbf{y}\right)ds_{\by} \quad i=1,2 \, .
\end{align}
If we now let
$\mathbf{f}_{j}$ denote the surface force on $\Gamma_j$ corresponding to 
the velocity field $\mathbf{u}_{inc}$ and make use of 
equations \eqref{eq:SLGaussLaw1StokesForce}
and \eqref{eq:SLGaussLaw2StokesForce}, we obtain
\begin{align}
\mathbf{F}_{j} = -\int_{\Gamma_{j}} \mathbf{f}_{j} \, ds_{\bx} 
= \int_{\Gamma_{j}}{\brho}_{j}\left(\mathbf{x}\right)ds_{\bx}
\quad{\rm for}\ j=1,2,\ldots N \, .
\label{eq:ForceImposition}
\end{align}
Using equations \eqref{eq:SLGaussLaw1StokesTorque},  and 
\eqref{eq:SLGaussLaw2StokesTorque}, we obtain
\begin{align}
T_{j} = -\int_{\Gamma_{j}} \left( \left(\mathbf{x} - \mathbf{x}^{c}_{j} \right)^{\perp}, \mathbf{f}_j \right) ds_{\bx} = 
\int_{\Gamma_j} \left( \left(\mathbf{x} - \mathbf{x}^{c}_j \right)^{\perp}, {\brho}_j \right) ds_{\bx} \, . \label{eq:TorqueImposition}
\end{align}
Thus, any choice of ${\brho}_{j}\left(\mathbf{x}\right)$ which satisfies
equations \eqref{eq:ForceImposition} and \eqref{eq:TorqueImposition} will define
an incident field that enforces
the desired force and torque conditions. 
We will use the simple formula
\begin{equation}
{\brho}_{j}\left(\mathbf{x}\right)=\frac{\mathbf{F}_{j}}{\left|\Gamma_{j}\right|} 
+ T_{j}\frac{\left(\mathbf{x}-\mathbf{x}^{c}_j \right)^{\perp}}
{W_{j}} \, ,
\label{eq:rho}
\end{equation}
where $\left|\Gamma_{j}\right|$ is the length of $\Gamma_{j}$
and
$W_{j} = \int_{\Gamma_{j}} \left| \mathbf{x} - \mathbf{x}^{c}_j \right|^2 ds_{\bx}$.

\subsection{The scattered field}

We now seek
a ``scattered'' velocity field $\mathbf{u}_{sc}\left(\mathbf{x}\right)$ induced
by unknown
surface force densities $\left\{ {\bmu}_{j}\left(\mathbf{x}\right)\right\} _{j=1}^{n}$
on the boundaries $\left\{ \Gamma_{j}\right\} _{j=1}^{n}$. 
Each ${\bmu}_j$ is a vector density 
${\bmu}_j =\left(\mu_{1,j},\mu_{2,j} \right)$. 
These densities correspond to a redistribution of 
surface forces that will be used to enforce the rigid body boundary conditions
without affecting the net force and torque.
We let
\[ {\bmu} \left(\mathbf{x}\right)=\left(\mu_{1,1}\left(\mathbf{x}\right),\mu_{2,1}\left(\mathbf{x}\right)
\ldots\mu_{1,N}\left(\mathbf{x}\right), \mu_{2,N}\left(\mathbf{x}\right)\right) 
\]
and define
\begin{align}
\mathbf{u}_{sc}\left(\mathbf{x}\right) & =
\mathcal{S}_{\Gamma}{\bmu}
\left(\mathbf{x}\right) \, . \label{eq:ScRepresenationStokes}
\end{align}
To ensure that no additional net forces or torques are introduced
on the surfaces $\Gamma_i$, we need to impose $3N$ integral constraints on
${\bmu}\left(\mathbf{x}\right)$, namely
\begin{align}
\int_{\Gamma_{j}} \mu_{i,j}\left(\mathbf{x}\right)ds_{\bx} & =0 \, , \\
\int_{\Gamma_{j}} \left(\left( \mathbf{x} - \mathbf{x}^{c}_j \right)^{\perp},\boldsymbol{\mu}_{j} \right) ds_{\bx} & = 0 \, .
\end{align}
The total velocity field is given by
\begin{align}
\mathbf{u}\left(\mathbf{x}\right) & =\mathbf{u}_{inc}\left(\mathbf{x}\right)+\mathbf{u}_{sc}\left(\mathbf{x}\right)
=\mathcal{S}_{\Gamma}\left({\bmu}\left(\mathbf{x}\right)+{\brho}\left(\mathbf{x}\right)\right) \, .
\label{eq:TotRepresentationStokes}
\end{align}

\subsection{Reformulation as an interior boundary value problem}

The function 
$\mathbf{u}(\bx) = \mathcal{S}_{\Gamma}({\bmu}(\bx)+{\brho}(\bx))$
also represents the velocity field inside
the rigid bodies. 
Since there is no internal stress in a rigid body, 
the stress tensor $\boldsymbol{\sigma}$
must be identically zero within $D_i$. Thus we will seek to impose
\begin{equation}
\mathbf{f}_{-} = \left(\boldsymbol{\sigma}\cdot \mathbf{n}\right)_{-} \equiv0
\end{equation}
for $\mathbf{x}\in\Gamma$. 
Using equation \eqref{SLnormaljumpStokes},
we obtain the following Fredholm integral equation of the second kind:
\begin{align}
\left(\frac{1}{2}\mathbf{I}+\mathcal{K}\right)\bmu\left(\mathbf{x}\right) & 
=-\left(\frac{1}{2}\mathbf{I}+\mathcal{K}\right)\brho\left(\mathbf{x}\right)\quad\mathbf{x}\in\Gamma\label{eq:IntegralEqStokes}
\end{align}
which we subject to the constraints
\begin{align}
\int_{\Gamma_{j}} \mu_{i,j}\left(\mathbf{x}\right)ds_{\bx} & =0 \, , 
\label{eq:IntegralEqStokesForceConstraint}\\ 
\int_{\Gamma_{j}} \left( ( \mathbf{x} - \mathbf{x}^{c}_j)^{\perp},\bmu_{j} \right) ds_{\bx} & = 0 \, , 
\label{eq:IntegralEqStokesTorqueConstraint}
\end{align}
where
\[
\mathbf{I} =\left(\begin{array}{cccc}
\mathbf{I}_{1}\\
 & \mathbf{I}_{2}\\
 &  & \ddots\\
 &  &  & \mathbf{I}_{N}
\end{array}\right) 
\] 
and
\[
\mathcal{K} =\left(\begin{array}{cccc}
\mathcal{K}_{1,1} & \mathcal{K}_{1,2} & \ldots & \mathcal{K}_{1,N}\\
\mathcal{K}_{2,1} & \mathcal{K}_{2,2} & \ldots & K_{2,N}\\
\vdots & \vdots & \ddots & \vdots\\
\mathcal{K}_{N,1} & \mathcal{K}_{N,2} &  & \mathcal{K}_{N,N}
\end{array}\right).
\]
Here,
$\mathbf{I}_{i}: C^{0,\alpha}\left(\Gamma_{i}\right) \times 
C^{0,\alpha}\left(\Gamma_{i}\right) \to C^{0,\alpha}\left(\Gamma_{i} \right) 
\times C^{0,\alpha}\left(\Gamma_{i} \right)$ 
is the identity map, 
and $\mathcal{K}_{i,j}:C^{0,\alpha}\left(\Gamma_{j}\right) \times 
C^{0,\alpha}\left(\Gamma_{j}\right) 
\to C^{0,\alpha}\left(\Gamma_{i}\right) \times C^{0,\alpha}\left(\Gamma_{i}\right)$ 
is the operator given by
\[
\left(\mathcal{K}_{i,j}\brho \right)_{k} = n_{\bx,l}\int_{\Gamma_{j}} \mathbf{T}_{k,m,l}\left(\mathbf{x}, \mathbf{y} \right) 
\rho_{m}\left(\mathbf{y}\right) ds_{\by} \quad \mathbf{x}\in\Gamma_{i}
\]
for $i \neq j$ and 
\[
\left(\mathcal{K}_{i,i}\brho \right)_{k} = n_{\bx,l}
\oint_{\Gamma_{i}} \mathbf{T}_{k,m,l}\left(\mathbf{x}, \mathbf{y} \right) 
\rho_{m}\left(\mathbf{y}\right) ds_{\by} \quad \mathbf{x}\in\Gamma_{i} \, .
\]

\begin{thm}
Let $\mathbf{u}\left(\mathbf{x}\right)$ be the total velocity,
defined in \eqref{eq:TotRepresentationStokes}.
If ${\bmu}\left(\mathbf{x}\right)$ solves equation \eqref{eq:IntegralEqStokes},
together with the constraints
\eqref{eq:IntegralEqStokesForceConstraint} and 
\eqref{eq:IntegralEqStokesTorqueConstraint}, 
then $\mathbf{u}\left(\mathbf{x}\right)$ solves the mobility problem.
\end{thm}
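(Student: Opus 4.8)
The plan is to follow the same architecture as the proof of the corresponding theorem for the elastance problem, translating each step into the Stokes setting via the dictionary velocity $\leftrightarrow$ potential, surface traction $\leftrightarrow$ charge. First I would observe that, by construction, $\mathbf{u}(\mathbf{x}) = \mathcal{S}_{\Gamma}(\bmu + \brho)(\mathbf{x})$ satisfies the Stokes equations \eqref{eq:StokesFlowEq} and \eqref{eq:MassConservation} in $E$, since every single layer Stokes potential does. Next I would verify the net force and torque conditions \eqref{eq:ForceCondition} and \eqref{eq:TorqueConditions}: using \eqref{eq:SLGaussLaw1StokesForce} and \eqref{eq:SLGaussLaw2StokesForce} together with the choice of $\brho$ in \eqref{eq:rho} (which, by \eqref{eq:ForceImposition} and \eqref{eq:TorqueImposition}, already carries the prescribed $\mathbf{F}_j$ and $T_j$) and the constraint \eqref{eq:IntegralEqStokesForceConstraint}, the total traction on $\Gamma_j$ integrates to $-\mathbf{F}_j$; likewise \eqref{eq:SLGaussLaw1StokesTorque}, \eqref{eq:SLGaussLaw12StokesTorque}, \eqref{eq:SLGaussLaw2StokesTorque} and the torque constraint \eqref{eq:IntegralEqStokesTorqueConstraint} give the correct torque.

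Then I would address the decay at infinity \eqref{eq:NoFlowAtInfty}. Here one invokes the asymptotic relation \eqref{eq:SLGrowthAtInftyStokes} for the single layer Stokes potential: the slowly decaying (logarithmic / Stokeslet) part of $\mathcal{S}_{\Gamma}(\bmu + \brho)$ is governed by $\int_{\Gamma}(\bmu + \brho)\,ds_{\bx} = \sum_j \int_{\Gamma_j}(\bmu_j + \brho_j)\,ds_{\bx}$, which equals $\sum_j \mathbf{F}_j = \mathbf{0}$ by the compatibility hypothesis \eqref{eq:NoNetForce} (using again that $\int_{\Gamma_j}\bmu_j = 0$ and $\int_{\Gamma_j}\brho_j = \mathbf{F}_j$). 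Hence that term vanishes and $\mathbf{u}(\mathbf{x}) \to \mathbf{0}$.

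The remaining point — the heart of the argument — is to establish the rigid body motion condition \eqref{eq:RigidBodyMotion}. The idea, exactly as in the elastance case, is to use the interior problem: $\mathbf{u}$ is defined inside each $D_i$ as well, it satisfies the Stokes equations there, and the integral equation \eqref{eq:IntegralEqStokes} was precisely constructed, via the traction jump relation \eqref{SLnormaljumpStokes}, so that the interior traction $\mathbf{f}_- = (\boldsymbol{\sigma}\cdot\mathbf{n})_-$ vanishes on each $\Gamma_i$. A Stokes flow in the bounded domain $D_i$ with zero surface traction has zero strain, hence is a rigid body motion $\mathbf{v}_i + \omega_i(\mathbf{x} - \mathbf{x}^c_i)^{\perp}$ on $\overline{D_i}$; by the continuity of the single layer Stokes potential across $\Gamma_i$, the same boundary values are attained from the exterior, which is \eqref{eq:RigidBodyMotion}. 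The main obstacle is the interior traction-free step: one must justify that zero surface traction forces $e(\mathbf{u}) \equiv 0$ inside $D_i$ — this is the interior analogue of the energy identity used in Lemma \ref{Uniqueness mobility}, namely $\int_{D_i}\langle e(\mathbf{u}), e(\mathbf{u})\rangle\,dV = \tfrac12\int_{\Gamma_i}(\mathbf{u}, \mathbf{f}_-)\,ds_{\bx} = 0$, so that $\mathbf{u}$ is a rigid body motion; one should also note the pressure is then constant inside, consistent with $\boldsymbol{\sigma} \equiv 0$. With \eqref{eq:StokesFlowEq}, \eqref{eq:MassConservation}, \eqref{eq:RigidBodyMotion}, \eqref{eq:ForceCondition}, \eqref{eq:TorqueConditions} and \eqref{eq:NoFlowAtInfty} all verified, $\mathbf{u}$ solves the mobility problem.
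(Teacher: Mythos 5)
Your proposal is correct and follows essentially the same route as the paper's proof: Stokes equations in $E$ by construction, the net force and torque conditions from the single-layer Gauss-law identities together with the choice of $\brho$ and the integral constraints, decay at infinity from the no-net-force condition combined with the asymptotic behavior of the Stokes single layer, and the rigid-body boundary condition from the vanishing interior traction plus continuity of the single layer potential across $\Gamma_i$. The only difference is that you explicitly supply the interior energy identity showing that zero surface traction forces $e(\mathbf{u})\equiv 0$ in $D_i$, a step the paper asserts without elaboration.
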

\begin{proof}
$\mathbf{u}\left(\mathbf{x}\right)$ clearly satisfies the Stokes equations in
$E$ by construction.
Using equations \eqref{eq:SLGaussLaw1StokesForce}, \eqref{eq:SLGaussLaw1StokesTorque},
\eqref{eq:SLGaussLaw2StokesForce} and \eqref{eq:SLGaussLaw2StokesTorque},
the choice of ${\brho}$ in equation \eqref{eq:rho}, and the constraints
\eqref{eq:IntegralEqStokesForceConstraint} and 
\eqref{eq:IntegralEqStokesTorqueConstraint}, 
we see that $\mathbf{u}\left(\mathbf{x}\right)$ satisfies the net force and 
torque conditions 
\eqref{eq:ForceCondition} and \eqref{eq:TorqueConditions}.
Furthermore, from \eqref{eq:NoNetForce}
and \eqref{eq:SLGrowthAtInftyStokes}, it follows that 
$\left|\mathbf{u}\left(\mathbf{x}\right)\right|\to0$
as $\left|\mathbf{x}\right|\to\infty$. 
Since $\mathbf{u}\left(\mathbf{x}\right)$ solves the Stokes equations
in $D_{i}$ and satisfies $\mathbf{f}_{-}\equiv0$
on $\Gamma_{i}$, $\mathbf{u}$ must be a rigid body motion. 
By the continuity of the single layer potential, 
$\mathbf{u}$ must define a rigid body motion from the exterior as well.
\end{proof}

\subsection{Existence of solutions \label{sec:extmob}}

It is well-known that $\frac{1}{2}\mathbf{I}+\mathcal{K}$ has a $3N$-dimensional
null space \cite{pozrikidis1992boundary}.
It follows from the Fredholm alternative that 
$\left(\frac{1}{2}\mathbf{I}+\mathcal{K}\right)\mathbb{\mu}=g$
has an $3N$ dimensional space of solutions, so long as $g$ is in the
range of the operator $\frac{1}{2}\mathbf{I}+\mathcal{K}$. 
From \eqref{eq:IntegralEqStokes}, this is clearly the case, and
the purpose of the 
$3N$ integral constraints is to select the particular solution
that doesn't alter the net forces and torques.
As for the elastance problem, however, we do not wish to solve
a rectangular linear system. If we discretize
the integral equation using Nystr\"{o}m quadrature, with $M$ points
on $\Gamma$, we would have to solve a $(2M+3N)\times 2M$ linear system
to obtain the desired solution. Instead, we propose to solve
the integral equation
\begin{align}
\frac{1}{2}{\bmu}_{i}\left(\mathbf{x}\right)+\sum_{j=1}^N
\mathcal{K}_{i,j}\bmu_{j}\left(\mathbf{x}\right)
+\int_{\Gamma_{i}}\bmu_{i} \left(\mathbf{y}\right)ds_{\by} \hspace{1.2in} \nonumber \\
+ \left(\mathbf{x} - \mathbf{x}_c \right)^{\perp}
\int_{\Gamma_{i}} \left( \left(\mathbf{y} - \mathbf{x}^{c}_i \right)^{\perp}, \bmu_{i} \left(\mathbf{y} \right) \right) ds_{\by} \hspace{0.6in} \nonumber \\
 = - \frac{1}{2}\brho_{i}\left(\mathbf{x}\right)-\sum_{j=1}^N
\mathcal{K}_{i,j}\brho_{j}\left(\mathbf{x}\right)
\quad\mathbf{x}\in\Gamma_{i}\label{eq:ModrepStokes}
\end{align}
or
\begin{align}
\left(\frac{1}{2}\mathbf{I}+\mathcal{K}+\mathbf{L}\right)\bmu & =-\left(\frac{1}{2}\mathbf{I}+\mathcal{K}\right)\brho \, ,
\label{eq:ModRep2Stokes}
\end{align}
where
\begin{align}
\mathbf{L} & =\left(\begin{array}{cccc}
\mathbf{L}_{1}\\
 & \mathbf{L}_{2}\\
 &  & \ddots\\
 &  &  & \mathbf{L}_{N}
\end{array}\right)
\end{align}
with
$\mathbf{L}_{i}$ 
defined by 
\begin{equation}
\mathbf{L}_{i}\bmu_{i}\left(\mathbf{x}\right)=\int_{\Gamma_{i}}\bmu_{i}\left(\mathbf{y}\right)ds_{\by}
+ \left(\mathbf{x} - \mathbf{x}^{c}_i \right)^{\perp}
\int_{\Gamma_{i}} \left( \left(\mathbf{y} - \mathbf{x}^{c}_i \right)^{\perp}, \bmu_{i} \left(\mathbf{y} \right) \right) ds_{\by}  \, .
\end{equation}

The following lemma shows that solving \eqref{eq:ModRep2Stokes}
is equivalent to solving \eqref{eq:IntegralEqStokes} with the 
constraints \eqref{eq:IntegralEqStokesForceConstraint} and 
\eqref{eq:IntegralEqStokesTorqueConstraint}. 

\begin{lem}
If ${\bmu}$ solves \eqref{eq:ModRep2Stokes}, then it solves 
\eqref{eq:IntegralEqStokes}, \eqref{eq:IntegralEqStokesForceConstraint} and 
\eqref{eq:IntegralEqStokesTorqueConstraint} \label{Lem:equivalenceStokes}.
\end{lem}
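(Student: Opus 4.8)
The plan is to imitate the proof of Lemma \ref{Lem:equivalence}, now carrying along both a vector (net force) and a scalar (net torque) functional on each $\Gamma_{i}$. Set $\mathbf{a}_{i}=\int_{\Gamma_{i}}\bmu_{i}\,ds_{\bx}$ and $b_{i}=\int_{\Gamma_{i}}\left((\by-\mathbf{x}^{c}_{i})^{\perp},\bmu_{i}(\by)\right)ds_{\by}$, so that $\mathbf{L}_{i}\bmu_{i}(\bx)=\mathbf{a}_{i}+b_{i}(\bx-\mathbf{x}^{c}_{i})^{\perp}$. Assume $\bmu$ solves \eqref{eq:ModRep2Stokes}. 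It suffices to show $\mathbf{a}_{i}=\mathbf{0}$ and $b_{i}=0$ for every $i$: then $\mathbf{L}\bmu=0$, so \eqref{eq:ModRep2Stokes} collapses to \eqref{eq:IntegralEqStokes}, and the constraints \eqref{eq:IntegralEqStokesForceConstraint} and \eqref{eq:IntegralEqStokesTorqueConstraint} are precisely $\mathbf{a}_{i}=\mathbf{0}$ and $b_{i}=0$.

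First I would establish the two integral ``moment'' identities $\int_{\Gamma_{i}}(\tfrac12\mathbf{I}+\mathcal{K})\bmu\,ds_{\bx}=\mathbf{0}$ and $\int_{\Gamma_{i}}\left((\bx-\mathbf{x}^{c}_{i})^{\perp},(\tfrac12\mathbf{I}+\mathcal{K})\bmu\right)ds_{\bx}=0$, and likewise with $\brho$ in place of $\bmu$. The clean route is to note, via the jump relation \eqref{SLnormaljumpStokes}, that $(\tfrac12\mathbf{I}+\mathcal{K})\bmu$ restricted to $\Gamma_{i}$ is exactly the interior traction $\mathbf{f}_{-}$ on $\Gamma_{i}$ of the single layer $\mathcal{S}_{\Gamma}\bmu=\sum_{j}\mathcal{S}_{\Gamma_{j}}\bmu_{j}$ (the $j=i$ block is the self traction, the $j\neq i$ blocks are the smooth tractions induced on $\Gamma_{i}$ by $\mathcal{S}_{\Gamma_{j}}\bmu_{j}$). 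Then $\int_{\Gamma_{i}}\mathbf{f}_{-}\,ds_{\bx}$ is the net interior force of $\mathcal{S}_{\Gamma_{i}}\bmu_{i}$, which vanishes by \eqref{eq:SLGaussLaw1StokesForce}, plus the net force over the closed curve $\Gamma_{i}\subset D_{j}^{+}$ of each $\mathcal{S}_{\Gamma_{j}}\bmu_{j}$, which vanishes by \eqref{eq:SLGaussLaw2StokesForce}; and $\int_{\Gamma_{i}}\left((\bx-\mathbf{x}^{c}_{i})^{\perp},\mathbf{f}_{-}\right)ds_{\bx}$ is the interior torque about $\mathbf{x}^{c}_{i}$ of $\mathcal{S}_{\Gamma_{i}}\bmu_{i}$, which vanishes by \eqref{eq:SLGaussLaw12StokesTorque}, plus, for $j\neq i$, the torque over $\Gamma_{i}$ of a force field of zero resultant, which (being base-point independent) reduces to \eqref{eq:SLGaussLaw2StokesTorque} and vanishes. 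None of this uses a special form of the density, so the same holds with $\brho$. Equivalently, one can interchange the order of integration in the principal-value diagonal blocks and invoke \eqref{eq:DLP2Stokes} and \eqref{eq:DLP2StokesTorque}, using \eqref{eq:SLGaussLaw2StokesForce} and \eqref{eq:SLGaussLaw2StokesTorque} off the diagonal, exactly as in the proof of Lemma \ref{Lem:equivalence}.

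Next I would integrate \eqref{eq:ModRep2Stokes} over $\Gamma_{i}$ componentwise. The $\tfrac12\mathbf{I}+\mathcal{K}$ parts on both sides drop out by the identity above, while $\int_{\Gamma_{i}}\mathbf{L}_{i}\bmu_{i}\,ds_{\bx}=\left|\Gamma_{i}\right|\mathbf{a}_{i}+b_{i}\int_{\Gamma_{i}}(\bx-\mathbf{x}^{c}_{i})^{\perp}ds_{\bx}=\left|\Gamma_{i}\right|\mathbf{a}_{i}$, the last term vanishing because $\mathbf{x}^{c}_{i}$ is the centroid of $\Gamma_{i}$. Hence $\mathbf{a}_{i}=\mathbf{0}$, which is \eqref{eq:IntegralEqStokesForceConstraint}. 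Then I would take the inner product of \eqref{eq:ModRep2Stokes} with $(\bx-\mathbf{x}^{c}_{i})^{\perp}$ and integrate over $\Gamma_{i}$: again the $\tfrac12\mathbf{I}+\mathcal{K}$ terms vanish, and $\int_{\Gamma_{i}}\left((\bx-\mathbf{x}^{c}_{i})^{\perp},\mathbf{L}_{i}\bmu_{i}\right)ds_{\bx}=\left(\mathbf{a}_{i},\int_{\Gamma_{i}}(\bx-\mathbf{x}^{c}_{i})^{\perp}ds_{\bx}\right)+b_{i}\int_{\Gamma_{i}}\left|\bx-\mathbf{x}^{c}_{i}\right|^{2}ds_{\bx}=W_{i}\,b_{i}$, the first summand vanishing by the centroid property (and redundantly by $\mathbf{a}_{i}=\mathbf{0}$). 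Since $W_{i}>0$ this gives $b_{i}=0$, which is \eqref{eq:IntegralEqStokesTorqueConstraint}. Finally, $\mathbf{a}_{i}=\mathbf{0}$ and $b_{i}=0$ for all $i$ give $\mathbf{L}\bmu=0$, so $(\tfrac12\mathbf{I}+\mathcal{K})\bmu=(\tfrac12\mathbf{I}+\mathcal{K}+\mathbf{L})\bmu=-(\tfrac12\mathbf{I}+\mathcal{K})\brho$, i.e. \eqref{eq:IntegralEqStokes}.

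The cancellations are routine; the only delicate bookkeeping is the torque step in the second paragraph, where for $j\neq i$ one must check that the traction induced on $\Gamma_{i}$ by $\mathcal{S}_{\Gamma_{j}}\bmu_{j}$ has zero torque about the ``wrong'' centroid $\mathbf{x}^{c}_{i}$, not just about $\mathbf{x}^{c}_{j}$. The resolution is the elementary fact that a force distribution with vanishing resultant has a base-point-independent torque, which is exactly why \eqref{eq:rho} and $\mathbf{L}_{i}$ were built with the centroid as reference and why the cross terms in $\mathbf{L}_{i}$ decouple.
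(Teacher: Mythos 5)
Your proposal is correct and follows essentially the same route as the paper: integrate \eqref{eq:ModRep2Stokes} and its moment against $\left(\bx-\mathbf{x}^{c}_{i}\right)^{\perp}$ over each $\Gamma_{i}$, show that the $\frac{1}{2}\mathbf{I}+\mathcal{K}$ contributions (for both $\bmu$ and $\brho$) have vanishing net force and torque moments, and use the centroid property to decouple the two parts of $\mathbf{L}_{i}$, yielding the constraints and hence $\mathbf{L}\bmu=0$. The only cosmetic difference is that the paper obtains the moment identities by interchanging the order of integration and invoking \eqref{eq:DLP1Stokes}, \eqref{eq:DLP2Stokes}, \eqref{eq:DLP1StokesTorque} and \eqref{eq:DLP2StokesTorque}, whereas your primary justification identifies $\left(\frac{1}{2}\mathbf{I}+\mathcal{K}\right)\bmu$ on $\Gamma_{i}$ with the interior traction of $\mathcal{S}_{\Gamma}\bmu$ and uses \eqref{eq:SLGaussLaw1StokesForce}--\eqref{eq:SLGaussLaw2StokesTorque} together with the base-point independence of torque for zero-resultant tractions --- a variant you yourself note is equivalent.
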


\begin{proof}
Using equation \eqref{eq:DLP1StokesTorque}, we see that
$\int_{\Gamma_{i}} \left(\left(\mathbf{x} - \mathbf{x}^{c}_i \right)^{\perp}, \mathcal {K}_{i,j}\bmu_{j}\left(\mathbf{x}\right)\right) ds_{\bx}  = 0$.\\
Similarly, using equation \eqref{eq:DLP2StokesTorque}, we see that
\begin{equation}
\int_{\Gamma_{i}} \left(\left(\mathbf{x} - \mathbf{x}^{c}_i \right)^{\perp}, \mathcal {K}_{i,i}\bmu_{i}\left(\mathbf{x}\right) \right) ds_{\bx} 
= -\frac{1}{2}\int_{\Gamma_{i}} \left( \left(\mathbf{x} - \mathbf{x}^{c}_i \right)^{\perp}, \bmu_{i}\left(\mathbf{x}\right) \right) ds_{\bx} \, .
\end{equation}
Since $\bx^{c}_i$ is the centroid of $\Gamma_{i}$, $\int_{\Gamma_{i}} \left(\mathbf{x} - \mathbf{x}^{c}_i \right)^{\perp} ds_{\bx} = 0$.
Taking the inner product of \eqref{eq:ModRep2Stokes} with 
$\left(\mathbf{x} - \mathbf{x}^{c}_i \right)^{\perp}$, integrating
the expression over $\Gamma_{i}$, and using the equations above, we obtain
\begin{equation}
\left(\int_{\Gamma_{i}} \left| \mathbf{x} - \mathbf{x}^{c}_i \right|^2 ds_{\bx} \right) \int_{\Gamma_{i}} 
\left( \left(\mathbf{y} - \mathbf{x}^{c}_i \right)^{\perp}, \bmu_{i} \left(\mathbf{y} \right) \right) ds_{\by} = 0 \, .
\end{equation}
From \eqref{eq:DLP1Stokes}, 
we observe that 
$\int_{\Gamma_{i}}\mathcal{K}_{i,j}\bmu_{j}
\left(\mathbf{x}\right)ds_{\bx}=0$ for $j\neq i$. 
Furthermore, switching the order of integration in
$\int_{\Gamma_{i}}\mathcal{K}_{i,i}\bmu_{i}\left(\mathbf{x}\right)ds_{\bx}$ and using property \eqref{eq:DLP2Stokes} of the double layer potential, 
we find that
\begin{align}
\int_{\Gamma_{i}}K_{i,i}\bmu_{i}\left(\mathbf{x}\right)ds_{\bx} 
 & =-\frac{1}{2}\int_{\Gamma_{i}}\bmu_{i}\left(\bx\right)ds_{\bx} \, .
\end{align}
Integrating the expression \eqref{eq:ModRep2Stokes} on $\Gamma_{i}$ 
and using the fact that 
\[ \int_{\Gamma_{i}} 
\left( \left(\mathbf{y} - \mathbf{x}^{c}_i \right)^{\perp}, \bmu_{i} \left(\mathbf{y} \right) \right) ds_{\by} = 0,
\] 
we may conclude that
\begin{equation}
\left|\Gamma_{i}\right|\int_{\Gamma_{i}}\bmu_{i}\left(\mathbf{x}\right)ds_{\bx}=0 \, .
\end{equation}
Thus, ${\bmu}$ satisfies the integral constraints 
\eqref{eq:IntegralEqStokesForceConstraint} and
\eqref{eq:IntegralEqStokesTorqueConstraint}, implying that \\
$\mathbf{L}_{i}\bmu_{i}\left(\mathbf{x}\right)=0$ 
and that 
\begin{equation}
\left(\frac{1}{2}\mathbf{I}+\mathcal{K}+\mathbf{L}\right)\bmu
=\left(\frac{1}{2}\mathbf{I}+\mathcal{K}\right)\bmu=-\left(\frac{1}{2}\mathbf{I}+\mathcal{K}\right)\brho \, .
\end{equation}
\end{proof}

The following lemma shows that the operator $\frac{1}{2}\mathbf{I}+\mathcal{K}+\mathbf{L}$ has no null space.
\begin{lem}
 The operator $\frac{1}{2}\mathbf{I}+\mathcal{K}+\mathbf{L}$ is injective.
\end{lem}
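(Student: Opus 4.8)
The plan is to imitate, line for line, the injectivity argument already carried out for the elastance operator $\frac{1}{2}I+K+L$, replacing the scalar single layer potential by the single layer Stokes potential, the interior Neumann problem by the interior traction-free Stokes problem, and the charge-neutrality bookkeeping by the force/torque Gauss-law identities \eqref{eq:SLGaussLaw1StokesForce}--\eqref{eq:SLGaussLaw2StokesTorque}. The only genuinely new ingredient is the observation that a Stokes flow in a bounded domain with vanishing surface traction must be a rigid body motion.

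First I would take ${\bmu}\in\mathcal{N}\!\left(\frac{1}{2}\mathbf{I}+\mathcal{K}+\mathbf{L}\right)$. Repeating verbatim the computation in the proof of Lemma \ref{Lem:equivalenceStokes} --- taking the inner product of $\left(\frac{1}{2}\mathbf{I}+\mathcal{K}+\mathbf{L}\right){\bmu}=0$ with $\left(\mathbf{x}-\mathbf{x}^{c}_{i}\right)^{\perp}$ and integrating over $\Gamma_i$ using \eqref{eq:DLP1StokesTorque} and \eqref{eq:DLP2StokesTorque}, then integrating the bare equation over $\Gamma_i$ using \eqref{eq:DLP1Stokes} and \eqref{eq:DLP2Stokes} --- one obtains $\int_{\Gamma_i}\bmu_i\,ds_{\bx}=0$ and $\int_{\Gamma_i}\left(\left(\mathbf{x}-\mathbf{x}^{c}_{i}\right)^{\perp},\bmu_i\right)ds_{\bx}=0$ for every $i$, hence $\mathbf{L}{\bmu}=0$ and therefore $\left(\frac{1}{2}\mathbf{I}+\mathcal{K}\right){\bmu}=0$. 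Next, set $\mathbf{u}=\mathcal{S}_{\Gamma}{\bmu}$. By the jump relation \eqref{SLnormaljumpStokes} the interior traction is $\mathbf{f}_{-}=\left(\frac{1}{2}\mathbf{I}+\mathcal{K}\right){\bmu}=0$ on each $\Gamma_i$, so the interior energy identity forces $e(\mathbf{u})\equiv 0$ and hence $\mathbf{u}$ is a rigid body motion inside each $D_i$; by continuity of the single layer Stokes potential it is the same rigid body motion on $\Gamma_i$ from the exterior side. Since $\int_{\Gamma}{\bmu}\,ds_{\bx}=0$, the far-field formula \eqref{eq:SLGrowthAtInftyStokes} gives $\left|\mathbf{u}(\mathbf{x})\right|\to 0$ as $\left|\mathbf{x}\right|\to\infty$, and \eqref{eq:SLGaussLaw1StokesForce} and \eqref{eq:SLGaussLaw1StokesTorque} together with the two integral identities just derived show that the net force and torque exerted on each $D_i$ by the exterior flow vanish. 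Thus $\mathbf{u}|_{E}$ meets the hypotheses of Lemma \ref{Uniqueness mobility} with $\mathbf{F}_i=\mathbf{0}$, $T_i=0$, so $\mathbf{u}\equiv\mathbf{0}$ in $E$ and consequently $\mathbf{f}_{+}=\mathbf{0}$. Finally, the traction jump of the single layer Stokes potential yields ${\bmu}=\mathbf{f}_{-}-\mathbf{f}_{+}=\mathbf{0}$, so $\mathcal{N}\!\left(\frac{1}{2}\mathbf{I}+\mathcal{K}+\mathbf{L}\right)=\{\mathbf{0}\}$.

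The hard part --- really the only step that is not a direct transcription of the elastance argument --- is justifying that vanishing interior traction forces a rigid body motion on each component: one has to check that $\mathbf{u}=\mathcal{S}_{\Gamma}{\bmu}$ actually satisfies the Stokes equations (with an honest associated pressure) inside each $D_i$, so that the identity $\int_{D_i}2\langle e(\mathbf{u}),e(\mathbf{u})\rangle\,dV=\int_{\Gamma_i}\left(\mathbf{u},\mathbf{f}_{-}\right)ds_{\bx}$ is available, and that the argument genuinely localizes component by component. Everything else --- the deduction $\mathbf{L}{\bmu}=0$, the decay at infinity, and the force/torque accounting --- follows mechanically from results already established above.
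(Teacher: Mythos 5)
Your proposal is correct and follows essentially the same route as the paper's own proof: deduce $\mathbf{L}\bmu=0$ by repeating the Lemma \ref{Lem:equivalenceStokes} computation, set $\mathbf{u}=\mathcal{S}_{\Gamma}\bmu$ with $\mathbf{f}_{-}=0$, invoke the interior traction-free (rigid-body) result, apply the mobility uniqueness lemma to get $\mathbf{u}\equiv 0$ in $E$ and $\mathbf{f}_{+}=0$, and conclude $\bmu=\mathbf{f}_{-}-\mathbf{f}_{+}=\mathbf{0}$. The extra details you flag (the interior energy identity, the decay at infinity from the zero-mean densities, and the force/torque bookkeeping) are exactly the steps the paper leaves implicit, so filling them in only strengthens the argument.
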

\begin{proof}
 Let $\bmu \in \mathcal{N}\left(\frac{1}{2}\mathbf{I} + \mathcal{K} +\mathbf{L} \right)$, i.e. it solves
 $\left(\frac{1}{2}\mathbf{I}+\mathcal{K}+\mathbf{L}\right)\bmu = 0$. 
 Following the proof of Lemma \ref{Lem:equivalenceStokes},
 we conclude that $\bmu$ satisfies the force and torque constraints given by equations 
 \eqref{eq:IntegralEqStokesForceConstraint} and \eqref{eq:IntegralEqStokesTorqueConstraint}.
 Thus $\mathbf{L}\bmu = 0$ and $\left(\frac{1}{2}\mathbf{I} + \mathcal{K}\right)\bmu = 0$. 
 Let $\mathbf{u}=\mathcal{S}_{\Gamma} \bmu$.  
 Let $\mathbf{f}_{-}$ and $\mathbf{f}_{+}$ denote the interior 
 and exterior limits of the surface traction corresponding to 
 the velocity field $\mathbf{u}$, respectively. 
 From the properties of the Stokes single layer potential 
 \begin{equation}
 \mathbf{f}_{-} = \left(\frac{1}{2}\mathbf{I}+\mathcal{K} \right)\bmu = 0 \, .
 \end{equation}
 By uniqueness of solutions to interior surface traction problem, 
 we conclude that $\mathbf{u}$ is a rigid body motion on each boundary
 component. Thus, $\mathbf{u}$ solves the mobility problem with $\mathbf{F}_{i} = \mathbf{0}$ and $T_{i}=0$.  
 By uniqueness of
 solutions to the mobility problem, we conclude that $\mathbf{u}\equiv 0$ in $E$. Hence, 
 $\mathbf{f}_{+} =0$. From the properties of the Stokes single layer,
 \begin{equation}
  \bmu = \mathbf{f}_{-} - \mathbf{f}_{+}= \mathbf{0} \, .
 \end{equation}
 Therefore, $\mathcal{N} \left(\frac{1}{2}\mathbf{I} + \mathcal{K} + \mathbf{L}\right) = \left \{ 0 \right \}$.
\end{proof}
By the Fredholm alternative, therefore, \eqref{eq:ModRep2Stokes} has a unique
solution $\mathbf{\mu}$.

\section{Numerical Examples\label{sec:num}}
The fact that the capacitance and elastance problems are inverses of each other,
and that completely different techniques can be used for their solution,
permits a robust test of the performance of our method in 
arbitrary geometry (without an exact reference solution).
The same is true for the resistance and mobility problems.

\subsection{The elastance problem \label{sec:4.1}}

Suppose now that we solve the capacitance problem
discussed in section \ref{sec:elast} using known techniques (see, \cite{mikhlin1964integral},
for example). That is,
given prescribed potentials $\phi_{j}$ on a collection of perfect conductors with
boundaries $\Gamma_{j}$, we may obtain the charges induced on each conductor.
We can then solve the elastance problem with these charges as input, using
the representation in section \ref{sec:elast}, and verify that the corresponding
potentials are those used in the original capacitance problem setup.
We emphasize that 
the integral equations used for the capacitance and elastance problems are 
not inverses of each other, so this provides a nontrivial test of accuracy.

More precisely, following the discussion 
in section \ref{sec:elast}, we consider the domain exterior 
to $N$ perfect conductors $D_{i}$ whose boundaries are given by $\Gamma_{i}$.
We prescribe potentials $\phi_{i}$ on the boundaries $\Gamma_{i}$ and 
solve the capacitance problem to obtain the net charge $q_{i}$ on $\Gamma_{i}$
and also the potential at $\infty$, $u_{\infty} = \lim_{\left|\bx \right| \to \infty} u\left(\bx \right)$.

We use these charges
as input for the elastance problem and to compute the potentials induced on the conductors, letting
$\sigma_{i,el}$
denote the uniformly distributed charge defined in terms of $q_{i}$,
as in section \ref{sec:incelast}.
$\mu_{i,el}$, as before, represents the unknown density on $\Gamma_{i}$ for the 
elastance problem and 
\begin{align}
u(\bx) & =u_{inc}(\bx)+u_{sc}(\bx) + u_{\infty}=
S_{\Gamma}(\mu_{el}+\sigma_{el})(\bx) + u_{\infty} \, . \label{eq:TotRepresentation2}
\end{align}
We then solve
\begin{equation}
\left(\frac{1}{2}I+K+L\right)\mu_{el}  = 
-\left(\frac{1}{2}I+K\right)\sigma _{el}, \label{eq:ModRep3}
\end{equation}
where $u_{\infty}$ is the potential
at $\infty$ computed in the capacitance problem, and the operators $K$ and $L$ are described in
section \ref{sec:extelast}. This is a small modification
of the representation presented in section \ref{sec:incelast} to account for the potential
at $\infty$. 
After solving for $\mu_{el}$, we may check the accuracy with which 
$u$ in equation \ref{eq:TotRepresentation2}
equals the potential $\phi_{j}$ on $\Gamma_{j}$, the potentials
prescribed in the original capacitance problem.

\subsubsection{Two disc test \label{sec:6.1.1}}
We first consider the case of two unit discs
separated by a 
distance $d$. This is useful because the exact solution is known and because we
wish to study the physical ill-conditioning of the problem as 
$d\to 0$.
\begin{figure}[H]
\begin{center}
 \includegraphics[width=6cm]{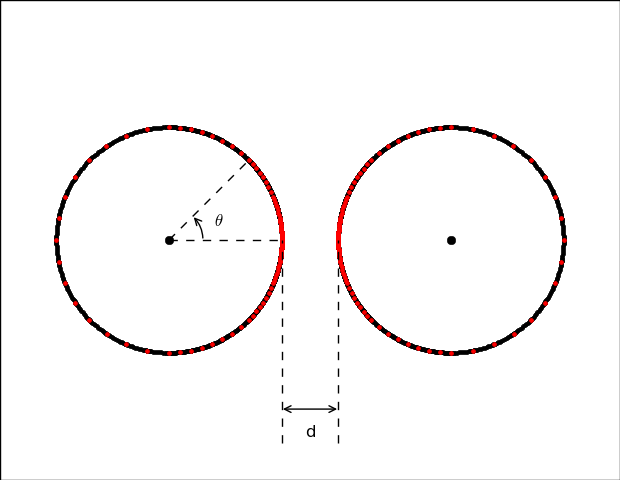}
 \end{center}
 \caption[Two discs problem - Elastance]{Discretization of the discs for Elastance example.
\label{Fig:twocd_el}}
\end{figure}

In the context of Fig. \ref{Fig:twocd_el}, we set $u|_{\Gamma_{i}} = 
\phi_{i}$ for $i=1,2$ (where $i=1$ corresponds to the left disc), with $\phi_{1} = 0.209$ and $\phi_{2} =-0.123$.
We consider $d=0.5,0.05,0.005$.

We use a Nystr\"{o}m discretization, based on subdivision of the boundary 
into panels, with Gauss-Legendre nodes given on each panel.
Let $\mathbf{s}_{i,j,l}$ denote the $j$th node 
on the $i$th panel on boundary component $l$. Let
$\sigma_{i,j,l,el}, \mu_{i,j,l,el}$ denote the density evaluated at $\mathbf{s}_{i,j,l}$.
We use a recently developed quadrature scheme, denoted by GLQBX 
(global + local quadrature by expansion) 
\cite{rachh15,GLQBX} for evaluating the layer potential 
$K$ in equation
\eqref{eq:ModRep3}.
This scheme is a robust extension of the QBX method of
\cite{QBX}, guaranteed to yield high order accuracy even when boundaries are close-to-touching.
We use an iterative GMRES-based solver to obtain 
to obtain $\mu_{el}$, and iterate to a relative
residue of $10^{-6}$. As $d\to 0$,
the problem becomes physically ill-conditioned, requiring an increasing
number of iterations. 
To improve the rate of convergence, we use an $L^2$-based rescaling of
the unknowns \cite{bremer2010efficient}.
That is, we use  
$\mu_{el}^{scale} = \mu_{i,j,l,el} \sqrt{r_{i}}$ as unknowns, so that the discrete 2-norm approximates
the $L^2$ norm where $r_{i}$ is the length of panel $i$.

We iterate the following discretized linear system
\begin{equation}
D \left(\frac{1}{2}I+\tilde{K}+\tilde{L}\right) D^{-1} \mu_{el}^{scale} =
-D \left(\frac{1}{2}I+\tilde{K}\right)\sigma_{el} \, ,
\end{equation}
where $\tilde{K}$ and $\tilde{L}$ are discretized versions of $K$ and $L$ 
respectively, $D$ is the diagonal operator described given by $D\mu_{el} = \mu_{el}^{scale}$. 

In Fig.  \ref{Fig:sigma1el}, we plot
the net charge density $\sigma_{1,el} +
\mu_{1,el}$ for the three different values of $d$.
In Fig. \ref{Fig:contourplot}, we plot the potential using the 
off-surface evaluation method of \cite{barnett-2014}, whose development
initially led to QBX. (The option of off-surface evaluation has been 
incorporated into our QBX software.)

From symmetry considerations, 
$\sigma_{2,el} + \mu_{2,el} = -\left(\sigma_{1,el} + \mu_{1,el} \right)$.
\begin{figure}[H]
\begin{center}
 \includegraphics[width=6cm]{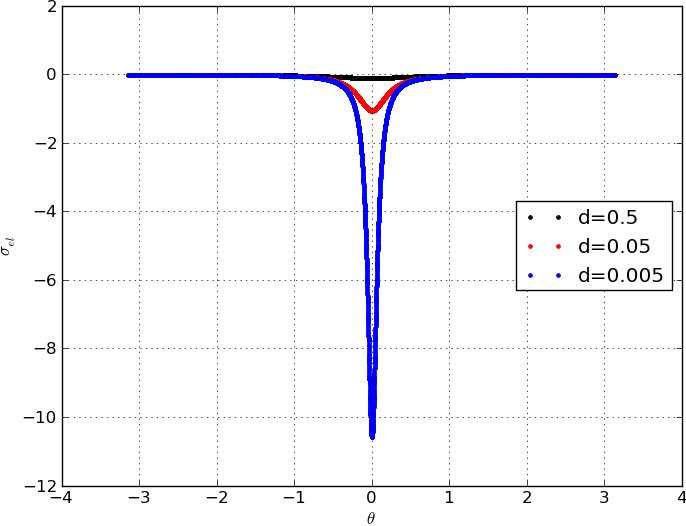}
 \end{center}
 \caption[Solution of integral equation for the elastance problem]{Solution of integral equation for the elastance problem, 
 $\sigma_{1,el} + \mu_{1,el}$ as a function of $d$.
\label{Fig:sigma1el}}
\end{figure}
\begin{figure}[H]
\begin{center}
 \includegraphics[width=6cm]{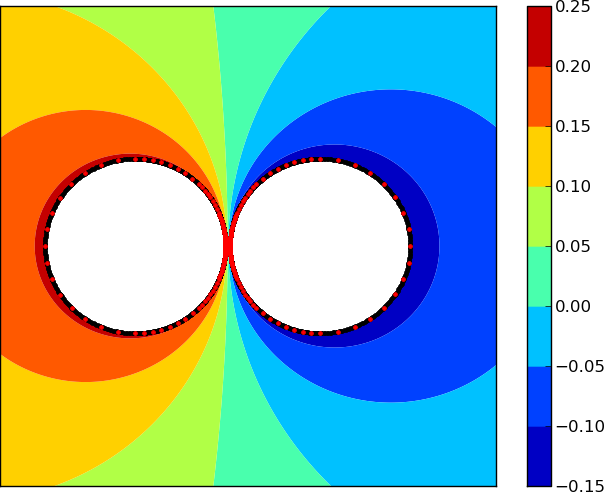}
 \end{center}
 \caption[Contour plot of $u$ in the exterior of the two discs]
 {Contour plot of $u$ in the exterior of the two discs for $\phi_{1} = 0.209$ and $\phi_{2} = -0.123$ for
 $d=0.05$.
\label{Fig:contourplot}}
\end{figure}
As noted earlier, the 
two disc Dirichlet problem has an analytic solution.
For this, suppose that the left disc is centered
at $\bx^{c}_{1} = \left(-1 -\frac{d}{2},0\right)$, that the right disc at $\bx^{c}_{2} = \left(1 + \frac{d}{2},0\right)$, and that the discs are held at 
constant potentials $\phi_{1}$ and $\phi_{2}$. Then, the exterior potential is given by
\begin{equation}
 u_{ex}\left(\bx\right) = -\frac{v_{1}}{2\pi}  
 \log \left(\frac{\left|\bx - \left(\alpha,0\right) \right|}{\left|\bx + \left(\alpha,0\right) \right|}\right) + v_{2}
\end{equation}
where 
\begin{equation}
\alpha = \sqrt{d + \frac{d^2}{4}},\quad v_{1} = \pi \frac{\left(\phi_{2} - \phi_{1} \right)}
{\log \left(\frac{\left|\bx_{0} + \left(\alpha,0\right) \right|}{\left|\bx_{0} - \left(\alpha,0\right) \right|}\right)},
\quad v_{2} = 0.5\left(\phi_{1} + \phi_{2}\right)
\end{equation}
with $\bx_{0} = \left(\frac{d}{2}\right)$. 
For each value of $d$, we compute the charge $q_{1}$ (since $q_{2} = -q_{1}$), 
the iteration count for the 
elastance problem $n_{it,el}$, and the relative $\mathbb{L}^{2}$ error of the potential on 
boundary $\Gamma_{i}$ given by $e_{i} = \sqrt{\frac{\int_{\Gamma_{i}}\left|u-u_{ex} \right|^2 \, ds_{\bx}}
{\int_{\Gamma_{i}}\left|u_{ex} \right|^2 \, ds_{\bx}}}$. We emphasize again that
this is {\em not} just a test of backward stability for the elastance solver, 
since we are solving two different boundary value problems.
\begin{table}[H]
\begin{center}
 \begin{tabular}{|c|c|c|c|c|}
 \hline
 $d$ & $q_{1}$ & $n_{it,el}$ & $e_{1}$ & $e_{2}$ \\ \hline
 0.5 & -0.239487 & 4 & $5.9 \, 10^{-8}$ & $1.5 \, 10^{-7}$ \\ \hline
 0.05 & -0.743917 & 8 & $2.0 \, 10^{-5}$ & $3.3 \, 10^{-5}$ \\ \hline
 0.005 & -2.348079 & 15 & $3.3 \, 10^{-5}$ & $5.1 \, 10^{-5}$ \\ \hline
 \end{tabular}
 \end{center}
 \caption[Summary of results for the capacitance and elastance problems 
with two discs]
 {Summary of results for the capacitance and elastance problems with two discs.}
 \label{capeltable}
\end{table}

\subsubsection{Splash test \label{sec:6.1.2}}
We repeat the test above with a more complicated geometry. 
We now consider $5$ conductors $D_{j}$, whose
boundaries $\Gamma_{j}$ are parametrized by
\begin{align}
 x_{j}\left(\theta \right) &=x^{c}_{j} +  r_{j}(\theta) \cos(\theta + \beta_{j}) \\
y_{j}\left(\theta \right) &=y^{c}_{j} +  r_{j}(\theta) \sin(\theta + \beta_{j})
\end{align}
where
\begin{align}
 r_{j}(\theta) = 1 + \sum_{k=1}^{12} a_{j,k} \sin \left(k\theta \right),
\end{align}
with the coefficients $a_{j,k}$ are uniformly chosen from $\left[0,0.1\right]$ and prescribe an arbitrary potential
on each of these objects.

We list here the parameters for defining the geometry and the exact solution 
in the previous section.
The table of centers $x^{c}_{j}, y^{c}_{j}$, and $\beta_{j}$ is given below.
\begin{table}[H]
\begin{center}
 \begin{tabular}{|c|c|c|c|c|c|}
  \hline
  & $\Gamma_{1}$ & $\Gamma_{2}$ & $\Gamma_{3}$ & $\Gamma_{4}$ & $\Gamma_{5}$ \\ \hline
 $x^{c}_{j}$ & -1.2 & 1.2 & 0 & -1.2 & 1.2  \\ \hline
 $y^{c}_{j}$ & 0 & 0 & -2.2 & -4.4 & -4.4  \\ \hline
 $\beta_{j}$ & $\pi$ & 0 & $\frac{\pi}{8}$ & $\frac{3\pi}{4}$ & -$\frac{\pi}{4}$ \\ \hline
 \end{tabular}
 \end{center}
 \caption[Geometry setup, splash test]{Parameters for setting up splash test for the Elastance and Mobility problems.}
 \end{table}
 In the next table, we list the coefficients $a_{j,k}$ for $j=1,2\ldots 5$ and $k = 1,2,\ldots 12$.
 \begin{table}[H]
\begin{center}
 \begin{tabular}{|c|c|c|c|c|}
  \hline
 $\Gamma_{1}$ & $\Gamma_{2}$ & $\Gamma_{3}$ & $\Gamma_{4}$ & $\Gamma_{5}$ \\ \hline
 0.012065 & 0.017038 & 0.070082 & 0.029959  & 0.012613 \\
 0.064385 & 0.041668 & 0.094629 & 0.069290 & 0.004017 \\
 0.006234 & 0.011991 & 0.046520 & 0.005102 & 0.07413 \\
 0.049028 & 0.022743 & 0.038905 & 0.067634 & 0.052361 \\
 0.030608 & 0.035266 & 0.043884 & 0.089215 & 0.084973 \\
 0.081641 & 0.10864 & 0.030143 & 0.097489 & 0.002916 \\
 0.099718 & 0.087338 & 0.084480 & 0.004693 & 0.081962 \\
 0.042460 & 0.096291 & 0.008018 & 0.055024 & 0.020443 \\
 0.076748 & 0.053323 & 0.069852 & 0.085238 & 0.069016 \\
 0.084684 & 0.040564 & 0.047617 & 0.070539 & 0.056950 \\
 0.016811 & 0.085034 & 0.015078 & 0.069771 & 0.051020 \\
 0.040454 & 0.016044 & 0.050553 & 0.051137 & 0.092286\\ \hline
 \end{tabular}
 \end{center}
 \caption[Geometry setup, splash test - II]{Coefficients $a_{j,k}$. For fixed $j$, the coefficients $a_{j,k}$ 
 for $\Gamma_{j}$ are listed in order of increasing $k$.}
 \end{table}
 
 The prescribed potentials $\phi_{j}$ on $\Gamma_{j}$ are given below,
as well as the $\mathbb{L}^{2}$ norms
 for the errors in $u$, 
given by $e_{i} = \sqrt{\frac{\int_{\Gamma_{i}}\left|u-u_{ex} \right|^2 \, ds_{\bx} }
{\int_{\Gamma_{i}}\left|u_{ex} \right|^2 \, ds_{\bx}}}$ on the boundary $\Gamma_{i}$.
The potential $u$ is computed after solving the 
elastance problem to see if we recover the exact values
$u_{ex}|_{\Gamma_{i}} = \phi_{i}$. 
\begin{table}[H]
\begin{center}
 \begin{tabular}{|c|c|c|c|c|c|}
  \hline
 j & 1 & 2 & 3 & 4 & 5 \\ \hline
 $\phi_{j}$ & 0.120625 & 0.643859 & 0.062342 & 0.490279 & 0.306079 \\ \hline
 $e_{j}$ & $2.1 \, 10^{-5}$ & $4.2 \, 10^{-6}$ & $2.4 \, 10^{-5}$ & 
$8.2 \, 10^{-6}$ & $8.0 \, 10^{-6}$ \\ \hline
 \end{tabular}
 \end{center}
 \caption[Results elastance splash test - III]{Prescribed potential on the boundary and the relative 
 $\mathbb{L}^{2}$ error in potential on the boundary.}
 \end{table}
 The elastance problem converged in 30 GMRES iterations, 
 with a relative residual of 
 $10^{-6}$. 
In Fig. \ref{Fig:contourplotsplash}, we show a 
contour plot of $u$, with boundary values set to the $\phi_{j}$.

\begin{figure}[H]
\begin{center}
 \includegraphics[width=6cm]{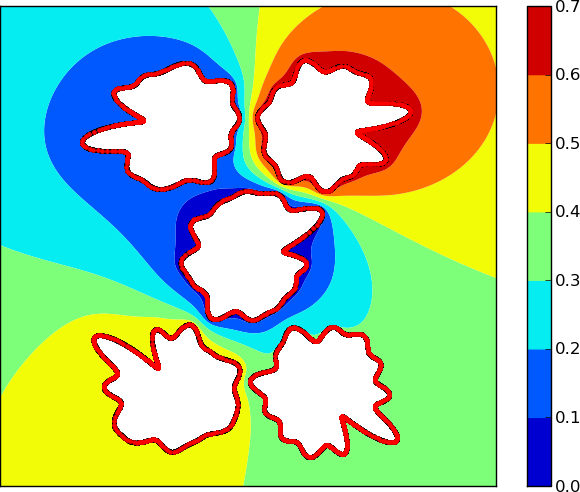}
 \end{center}
 \caption[Contour plot of the potential $u$ for the splash test for the elastance problem]
 {Contour plot of the potential $u$ 
 in the exterior of $\cup_{j} D_{j}$.
\label{Fig:contourplotsplash}}
\end{figure}

\subsubsection{Application: Computing dielectric properties of nanocomposites \label{sec:6.1.3}}
Nanocomposites are composite material consisting of nanoparticles in a host medium. Of particular interest are
nanocomposites consisting of metallic particles in a homogeneous organic host 
due to their applications in transformation optics and
high energy density storage materials. We shall treat the nanocomposite as
a collection of nanoparticles which are perfect conductors in ambient space. Computing bulk dielectric properties
of such materials as a function of shape, orientation and the volume fraction 
of these nanoparticles is of practical interest.
Low frequency dielectric constants are typically determined experimentally using ``capacitance'' measurements.
The dielectric constant is determined by measuring the voltage drop between two charged plates in the presence 
and absence of the nanocomposite. If the two conducting plates have charge $\pm Q$ and the 
measured potential difference is
$\Delta V$, then the ``capacitance'' of the configuration is computed as 
\begin{equation}
\tilde{C} = \frac{Q}{\Delta V} \label{eq:compeffcap}
\end{equation} 
The potential drop,
$\Delta V$ can be computed by solving an elastance problem.
\begin{rem}
It should be noted that obtaingin $\tilde{C}$ in this manner is different from computing
the mutual capacitance between the two plates for the given configuration
of nanoparticles \cite{zheng2012effects}.
Experimentally one could have applied a potential difference between the two plates 
and measured the charge accumulated on them. 
However, to determine the mutual capacitance of this configuration numerically, 
one would need to know the potentials on each of the nanoparticles, and this data is
not available.
\end{rem}

For fixed volume fraction, we carry out a two-dimensional version of the 
study in \cite{zheng2012effects}. 
In particular, we study the effects of varying the number of particles and
their aspect ratio.
Let $D_1$ and $D_2$ with boundaries $\Gamma_1$ and $\Gamma_2$,
represent the capacitor plates. The boundaries $\Gamma_1$ and $\Gamma_2$ are shifted copies
of a rounded bar $\gamma$ parametrized by
\begin{align}
 x\left(s \right) &=  \begin{cases}
  1.1\left(1 - \frac{2}{\pi}\left(e^{-100s^2} + s\cdot \text{erf}\left(10s\right)  \right)\right) \quad & s\in 
  \left[-\frac{\pi}{2},\frac{\pi}{2}\right]\\
  -x\left(2\pi-s \right) \quad & s\in (\frac{\pi}{2}, \frac{3\pi}{2}]
\end{cases} \\
y(s) &= \begin{cases}
         0.1 \text{erf}\left( 7 s\right) \quad & s\in\left[-\frac{\pi}{2},\frac{\pi}{2}\right]\\
         y\left(2\pi - s \right) \quad & s\in (\frac{\pi}{2}, \frac{3\pi}{2}]
        \end{cases}
\end{align}
The curve $\gamma$ is discretized by sampling it at
$s_k = -\frac{\pi}{2} + \pi(k-0.5)/N$, $k=1,2,\ldots 2N$.
We verify that the curve is well-resolved by studying the discrete 
Fourier coefficients of the sampled curve and choose $N$ sufficiently large that 
the curve is approximated to at least the desired tolerance.
More precisely,
the boundaries $\Gamma_1$ and $\Gamma_2$ are parametrized by 
$\left(x\left(s\right),y\left(s\right)\pm 1.1\right)$ 
and discretized using $800$ points each. 
For the nanoparticles, we use an 
$m\times 10$ lattice of elliptic
inclusions, each of which has an area equal to $\frac{0.002}{m}$ and an aspect ratio $A$. 
They are centered at
{\small 
\begin{equation}
\left(-0.9 + \frac{1.8\left(k-1\right)}{10}, -0.9 + \frac{1.8\left(j-1\right)}{m} \right) \quad j=1,2\ldots m, \, k=1,2\ldots 10 \, .
\end{equation}
}
 The aspect ratio $A$ 
is restricted to ensure that the nanoparticles do not overlap 
Each of these elliptical inclusions is discretized
using an equispaced sampling of the central angle with $600$ points each. 
\begin{figure}[H]
\centering
\includegraphics[scale=0.3]{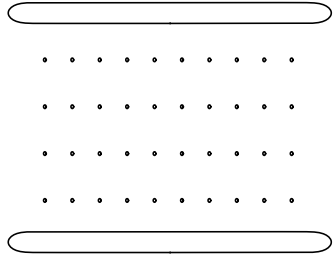}
\caption{Capacitor plates with intervening nanoparticles. Here, there are 
$m=4$ rows with aspect ratio set to $A=0.5$.
\label{Fig:3}}
\end{figure}
In this case, we have a total of $10m + 2$ conductors whose boundaries are discretzed
with $N_{pts} = 6000m + 1600$ points. We prescribe charges $1$ and $-1$ on conductors $\Gamma_1$ and $\Gamma_2$,
respectively, and assume the other $10m$ conductors are charge neutral. 
We measure the potential difference $\Delta V = V_1 - V_2$
where $V_i$ is the potential on $\Gamma_i$, $i=1,2$ and compute the capacitance via equation
\eqref{eq:compeffcap} for various values of $m$, $n$ and $A$. As before, we compute the layer potentials using a 6th
order GLQBX scheme accelerated via an FMM, and 
iterate using GMRES until the relative residual
in our computation is less than $10^{-6}$. 
\begin{table}[!ht]
\centering
 \begin{tabular}{|c|c|c|c|c|c|}
  \hline
  m & A & $N_{pts}$ & $N_{it}$ & $t_{solve}$ &  $\tilde{C}$ \\ \hline
  0 & - & 1600 & 8 &  0.4539 & 2.2949 \\ \hline
  \multirow{5}{*} 1 & 0.25 & \multirow{5}{*} {7600} & 11 & 3.2965 & 2.3147  \\ \hhline{~-~---}
 & 0.5 &  & 8 & 2.2417 & 2.3073 \\ \hhline{~-~---}
 & 1.0 &  & 8 & 2.2907 & 2.3033 \\ \hhline{~-~---}
 & 2.0 &  & 8 & 2.2167 & 2.3013 \\ \hhline{~-~---}
 & 4.0 &  & 11 & 3.4305 & 2.3003 \\ \hline
  \multirow{5}{*} 4 & 0.25 & \multirow{5}{*} {25600} & 11 & 11.5612 & 2.3191 \\ \hhline{~-~---}
 & 0.5 &  & 9 & 8.7257 & 2.3095 \\ \hhline{~-~---}
 & 1.0 &  & 7 & 7.2139 & 2.3047 \\ \hhline{~-~---}
 & 2.0 &  & 9 & 8.6497 & 2.3023 \\ \hhline{~-~---}
 & 4.0 &  & 10 & 10.1145 & 2.3012 \\ \hline
  \multirow{5}{*}{16} & 0.25 &  \multirow{5}{*} {97600} & 11 & 44.8332 & 2.3200\\ \hhline{~-~---}
 & 0.5 &  & 9 & 33.6639 & 2.3099 \\ \hhline{~-~---}
 & 1.0 &  & 8 & 31.2122 & 2.3049 \\ \hhline{~-~---}
 & 2.0 &  & 9 & 33.5139 & 2.3024 \\ \hhline{~-~---}
 & 4.0 &  & 11 & 46.4239 & 2.3012 \\ \hline

 \end{tabular}
\caption{Capacitance of nanocomposites. $N_{it}$ is the number of GMRES iterations, 
and $t_{solve}$ is the time taken
to solve the elastance problem in seconds on a single CPU core. }
\end{table}\\
Let $\tilde{C}_0$ be the capacitance in the absense of the nanocomposite (corresponding to $m=0$). We plot below
the percentage change in capacitance $100\left(\tilde{C} - \tilde{C}_{0}\right)/\tilde{C}_0$ as a function
of $m$ and $A$.

\begin{figure}[H]
\centering
\includegraphics[scale=0.4]{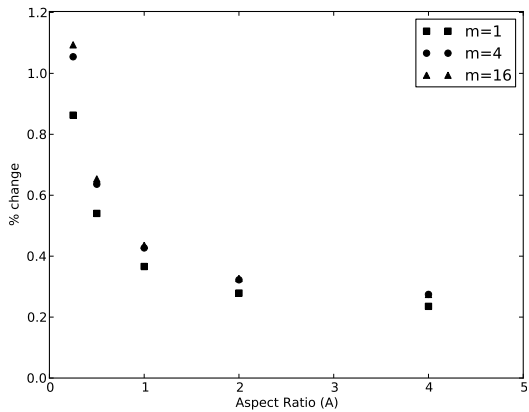}
\caption{ $\%$ change in $\tilde{C}$ as a function of $m$ and $A$.
\label{Fig:4}}
\end{figure}

\subsection{Mobility problem \label{sec:6.2}}
We turn now to a test for our mobility representation.
Given prescribed velocities for a set of rigid bodies, we solve 
the resistance problem
and compute the resulting forces and torques on them. We then 
use these forces and torques
as input for the mobility problem and check that the velocity on the boundary 
of the rigid body is the prescribed rigid body motion. As before, this is 
a stringent test, since the integral
equation for the resistance problem is {\em not} simply the inverse of the integral
equation for the mobility problem.

We consider, as above, the domain exterior to $N$ rigid bodies
$D_{i}$, whose boundaries are given by $\Gamma_{i}$. We prescribe
velocities $\mathbf{u} = \mathbf{v}_{i} + \omega_{i} \left(\bx - \bx^{c}_{i} \right)^{\perp}$
on $\Gamma_{i}$ and, solve the resistance problem
to compute the forces and torques 
on the rigid bodies $D_{i}$ and also the velocity at $\infty$, $\mathbf{u}_{\infty} = 
\lim_{\left|\bx \right| \to \infty} \mathbf{u} \left(\bx \right)$. 
We use these forces and torques as input for the mobility problem to compute the rigid
body motions. Let $\brho_{i,mob}$
denote the incident velocity field due to the forces and torques as described in section \ref{sec:intmob}
and let $\bmu_{i,mob}$ represent the unknown density on $\Gamma_{i}$ for the mobility problem. 
To summarize, we set
\begin{align}
\mathbf{u}\left(\mathbf{x}\right) & =\mathbf{u}_{inc}\left(\mathbf{x}\right)+\mathbf{u}_{sc}\left(\mathbf{x}\right)
+ \mathbf{u}_{\infty}
=\mathcal{S}_{\Gamma}\left({\bmu}_{mob} \left(\mathbf{x}\right)+{\brho}_{mob}\left(\mathbf{x}\right)\right) +
\mathbf{u}_{\infty}\, ,
\label{eq:TotRepresentationStokes2}
\end{align}
and wish to solve
\begin{align}
\left(\frac{1}{2}\mathbf{I}+\mathcal{K}+\mathbf{L}\right)\bmu_{mob} & =
-\left(\frac{1}{2}\mathbf{I}+\mathcal{K}\right)\brho_{mob} \, ,
\label{eq:ModRep2Stokes2}
\end{align}
where $\bmu_{mob} = \left(\bmu_{1,mob}, \bmu_{2,mob} \right)$,
$\brho_{mob} = \left(\brho_{1,mob}, \brho_{2,mob} \right)$, $\mathbf{u}_{\infty}$ is the velocity
at $\infty$ computed in the resistance problem, and the operators $\mathcal{K}$ and $\mathbf{L}$ are described
in section \ref{sec:extmob}. This is a small modification
of the representation presented in section \ref{sec:intmob} to account for the velocity
at $\infty$. After solving for $\bmu_{mob}$, we verify that $\bu$ in equation \eqref{eq:TotRepresentationStokes2}
is the original rigid body motion.
\subsubsection{Two discs}
We first test the mobility representation in the exterior of two discs,
with the same geometry is above, in section
\ref{sec:6.1.1}. However, we use a finer discretization to resolve the more singular
densities incurred in the mobility problem.
\begin{figure}[H]
\begin{center}
 \includegraphics[width=6cm]{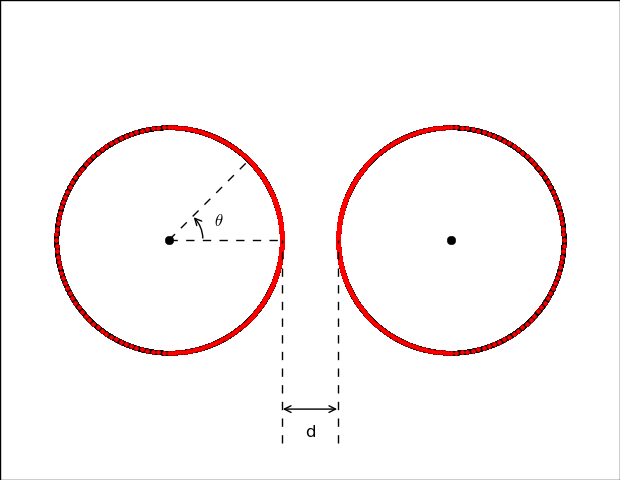}
 \end{center}
 \caption[Two discs problem - Mobility]{Discretization of the discs for Mobility example.}\label{Fig:twocd_mob}
\end{figure}
Refering to Fig. \ref{Fig:twocd_mob}, we set $\mathbf{u}|_{\Gamma_{i}} = 
\mathbf{v}_{i} + \omega_{i} \left(\bx - \bx^{c}_{i} \right)^{\perp}$ for $i=1,2$ ($i=$ corresponds to the disc
on the left), where we set
$\mathbf{v}_{1} = \left(2.09, 1.00 \right)$, $\mathbf{v}_{2} = \left(-1.034,0.254 \right)$,  $\omega_{1} = 0.12$
and $\omega_{2} = 0.33$. We again test the problem for $d=0.5,0.05,0.005$. 

As above, we use
a Nystr\"{o}m discretization with Gauss-Legendre panels.
$\mathbf{s}_{i,j,l}$ denotes the $j$th Gauss-Legendre node
on panel $i$ on boundary $l$. Let
$\brho_{i,j,l,mob},\bmu_{i,j,l,mob}$ denote 
the densities at $\mathbf{s}_{i,j,l}$.
We use the GLQBX quadrature scheme for evaluating the layer potential,
$\frac{1}{2}\mathbf{I} + \mathcal{K}$ in equation \eqref{eq:ModRep2Stokes2}. 
We use an iterative GMRES-based solver
to obtain $\bmu_{mob}$, with a relative
residual tolerance of $10^{-6}$. The physical conditioning increases as 
$d\to 0$, requiring a large number of iterations. 
To improve the rate of convergence of GMRES, we use $L^2$ weighting for
the unknowns \cite{bremer2010efficient}.
That is, we use $\bmu^{scale}_{mob} = \bmu_{i,j,l,mob} \sqrt{r_{i}}$ as unknowns. 

We solve the following linear system
\begin{equation}
D \left(\frac{1}{2}\mathbf{I}+\mathcal{\tilde{K}}+\mathbf{\tilde{L}}\right) D^{-1} \bmu_{mob}^{scale} =
-D \left(\frac{1}{2}\mathbf{I}+\mathcal{\tilde{K}}\right)\brho_{mob} \, ,
\end{equation}
where $\mathcal{\tilde{K}}$ and $\mathbf{\tilde{L}}$ are discretized versions of $\mathcal{K}$ and $\mathbf{L}$ 
respectively, and $D$ is the diagonal operator given by $D \bmu_{mob} = \bmu^{scale}_{mob}$. 

We plot below the 
net surface traction $\brho_{1,mob} +
\bmu_{1,mob}$ and a quiver plot of the velocity field in the exterior of the two
discs for $d=0.05$ (Figs.  \ref{brhodensmob} and \ref{Fig:contourplotmob}).
From symmetry considerations, 
$\brho_{2,mob} + \bmu_{2,mob} = -\left(\brho_{1,mob} + \bmu_{1,mob} \right)$.
\begin{figure}[H]
\begin{center}
\includegraphics[width=5.5cm]
{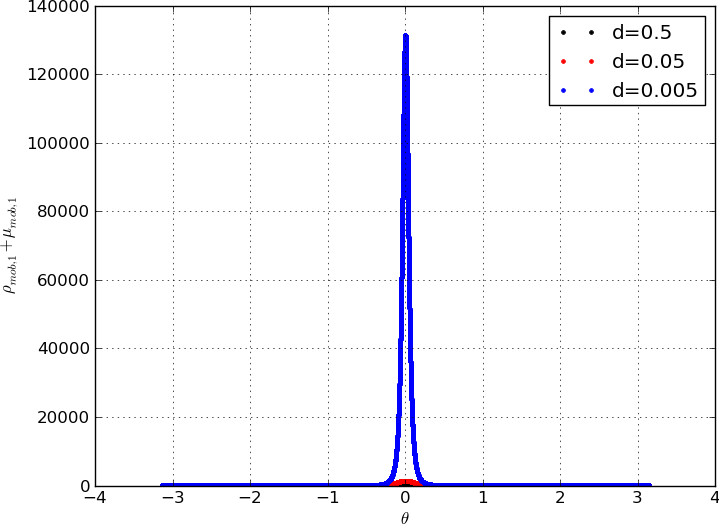}
\hspace{.3in}
    \includegraphics[width=5.5cm]
{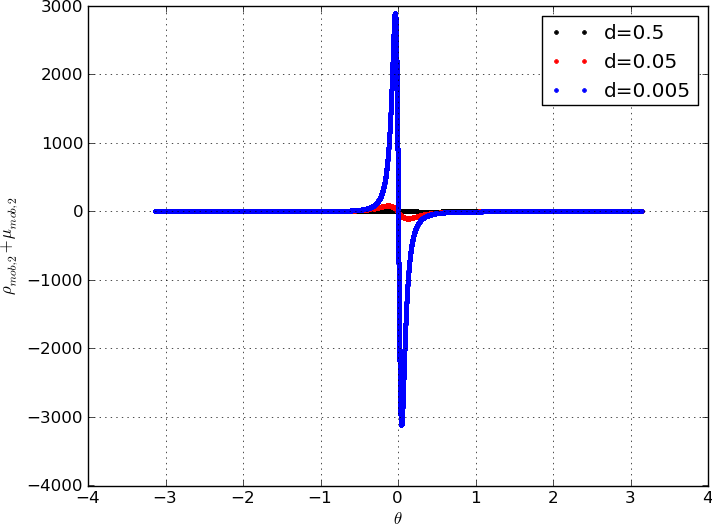}
\end{center}
\caption[Solution for integral equation for Mobility problem]
{Integral equation solution for the mobility problem:
$\rho_{1,1,mob}+\mu_{1,1,mob}$ (left) and $\rho_{2,1,mob}+\mu_{2,1,mob}$ (right) as a function of $\theta$ 
for $d=0.5,0.05,0.005$}
\label{brhodensmob}
\end{figure}
\begin{figure}[H]
\begin{center}
 \includegraphics[width=6cm]{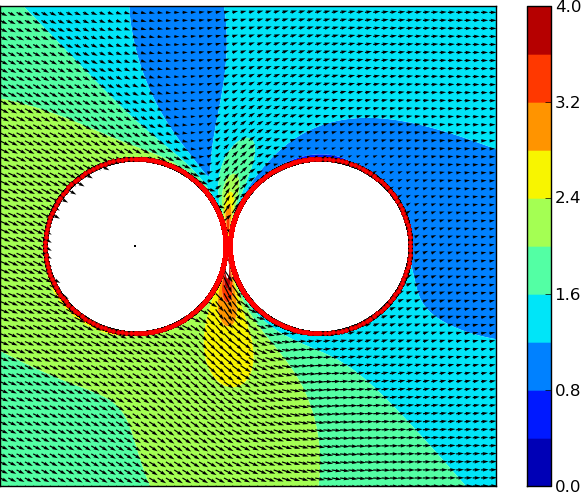}
 \end{center}
 \caption[Quiver plot of $\mathbf{u}$ and contour plot of $\left| \mathbf{u} \right|$ in the exterior of the two discs]
 {Quiver plot of $\mathbf{u}$ and contour plot of $\left|\mathbf{u} \right|$ in the exterior of the two discs.
\label{Fig:contourplotmob}}
\end{figure}
For each value of $d$, we compute the forces and torques $F_{1,1}, F_{2,1}, T_{1}, T_{2}$ 
(since $\mathbf{F}_{2} = -\mathbf{F}_{1}$), the iteration count for the mobility problem, $n_{it}$, 
and the relative $\mathbb{L}^{2}$ error of the velocity on both
the boundaries $\Gamma_{1}$ and $\Gamma_{2}$ given by $e_{i} = \sqrt{\frac{\int_{\Gamma_{i}}\left|\mathbf{u}-\mathbf{u}_{ex} \right|^2 \, ds_{\bx}}
{\int_{\Gamma_{i}}\left|\mathbf{u}_{ex} \right|^2 \, ds_{\bx}}}$. This is again a nontrivial test
of our solvers, as we enforce
boundary conditions on the fluid stress here, not the velocity $\mathbf{u}$.

\begin{table}[H]
\begin{center}
{\footnotesize
 \begin{tabular}{|c|c|c|c|c|c|c|c|}
 \hline
 $d$ & $F_{1,1}$ & $F_{2,1}$ & $T_{1}$ & $T_{2}$ & $n_{it}$ & $e_{1}$ & $e_{2}$  \\ \hline
 0.5 & 27.180434 & -6.575686 & -1.496082 & 1.494675 & 7 & $8.8 \, 10^{-8}$ & $1.8 \, 10^{-5}$ \\ \hline
 0.05 & 499.08688 & -15.202716 & -11.159661 & -4.859692 & 19 & $5.1 \, 10^{-6}$ & $8.5 \, 10^{-6}$ \\ \hline
 0.005 & 14653.544 & -40.877338 & -42.867299 & -24.078713 & 60 & $1.0 \, 10^{-6}$ & $2.2 \, 10^{-6}$ \\ \hline
 \end{tabular}
 \caption[Summary of results for two discs test for resistance and mobility problems]
 {Summary of results for two discs test for resistance and mobility problems.}
 \label{resmobtable}
}
 \end{center}
\end{table}

\subsubsection{Splash test \label{sec:6.1.2a}}
We repeat the test above in a more complicated geometry, but 
consider $5$ bodies $D_{j}$, whose
boundaries $\Gamma_{j}$ are parametrized as
\begin{align}
 x_{j}\left(\theta \right) &=x^{c}_{j} +  r_{j}(\theta) \cos(\theta + \beta_{j}) \\
y_{j}\left(\theta \right) &=y^{c}_{j} +  r_{j}(\theta) \sin(\theta + \beta_{j})
\end{align}
where
\begin{align}
 r_{j}(\theta) = 1 + \sum_{k=1}^{12} a_{j,k} \sin \left(k\theta \right)
\end{align}
where the parameters $a_{j,k}$, $x^{c}_{j}$ and $y^{c}_{j}$ are the described in section
\ref{sec:6.1.2}.

 The prescribed velocities $\mathbf{v}_{j}=\left(v_{1,j}, v_{2,j}\right)$ and $\omega_{j}$ 
 on $\Gamma_{j}$ are given below, along with the $\mathbb{L}^{2}$ norm
 in the error in $\mathbf{u}$, given by $e_{i} = \sqrt{\frac{\int_{\Gamma_{i}}\left|\mathbf{u}-\mathbf{u}_{ex} \right|^2 \, ds_{\bx}}
{\int_{\Gamma_{i}}\left|\mathbf{u}_{ex} \right|^2 \, ds_{\bx}}}$ on the boundary $\Gamma_{i}$, 
after solving the mobility problem is listed below where $\mathbf{u}_{ex}|_{\Gamma_{i}} = \mathbf{v}_{i} + \omega_{i}
\left(\bx - \bx^{c}_{i} \right)^{\perp}$. 
\begin{table}[H]
\begin{center}
 \begin{tabular}{|c|c|c|c|c|c|}
  \hline
 j & 1 & 2 & 3 & 4 & 5 \\ \hline
 $v_{1,j}$ & -0.379375 & -0.009720 & 0.497180 & 0.346837 & -0.197527 \\ \hline
 $v_{2,j}$ & 0.143846 & -0.193921 & -0.075401 & -0.331891 & 0.273004 \\ \hline
 $\omega_{j}$ & -0.437658 & 0.316414 & 0.267477 & -0.095456 & -0.184353 \\ \hline
 $e_{j}$ & $1.8 \, 10^{-5}$ & $2.5 \, 10^{-5}$ & $1.1 \, 10^{-5}$ & $1.3 \, 10^{-5}$ & 
$1.3 \, 10^{-5}$ \\ \hline
 \end{tabular}
 \end{center}
 \caption[Results elastance splash test - III]{Prescribed velocity on the boundary and the relative 
 $\mathbb{L}^{2}$ error in velocity on the boundary after solving the resistane 
 and mobility problems.}
 \end{table}
 The mobilitiy problem converged in 71 GMRES iterations 
 to a relative tolerance of $10^{-6}$.
 We show a quiver plot for the velocity $\mathbf{u}$ corresponding to 
 the above prescribed value of velocity on the boundary $\mathbf{u}|_{\Gamma_{j}}$.
 The background
 is a contour plot of the magnitude $\left|\mathbf{u} \right|$ in the exterior
 of the $\cup_{j} D_{j}$ (Fig.  \ref{Fig:contourplotmobsplash}).
\begin{figure}[H]
\begin{center}
 \includegraphics[width=6cm]{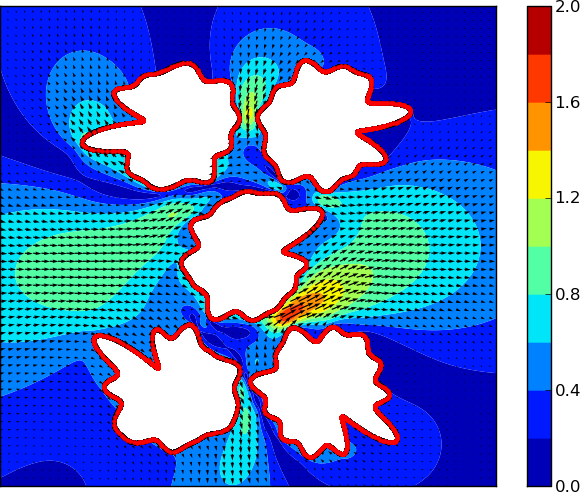}
 \end{center}
 \caption[Quiver plot of $\mathbf{u}$ superimposed on contour plot of $\left| \mathbf{u} \right|$ for splash test for mobility problem]
 {Quiver plot of $\mathbf{u}$ superimposed on contour plot of $\left|\mathbf{u} \right|$ in the exterior of $\cup_{j} D_{j}$.
\label{Fig:contourplotmobsplash}}
\end{figure} 

\section{Conclusions} \label{sec:concl}

We have derived a new, physically motivated integral formulation for 
the elastance problem in exterior domains. The analogous physical reasoning
yields a new derivation of an integral equation developed earlier by Kim and Karrila
for the mobility problem \cite{karrila1989integral}.
Discretization of the resulting integral equations using 
the quadrature scheme GLQBX \cite{rachh15,GLQBX} permits high order
accuracy to be obtained in complex geometry, including the interaction
of close-to-touching boundary components.
The resulting linear systems can be solved iteratively using GMRES and
the necessary matrix-vector multiplications can be accelerated using the 
fast multipole method. If $N$ denotes the number of points used in the 
discretization of the physical boundaries, the total cost scales linearly
with $N$. We are currently working on the extension of our scheme to 
closed or periodic systems and to problems in three dimensions. 

\section*{Acknowledgments}
This work was supported in part by the Applied Mathematics program
in the Department of Energy, Office of Advanced Scientific Computing Research, 
under contract DEFGO288ER25053 and
by the Office of the Assistant Secretary of Defense for Research and 
Engineering and AFOSR under NSSEFF Program Award FA9550-10-1-0180. The authors would
like to thank Alex Barnett and Mike O'Neil for several useful discussions.

\bibliography{Elastance8}
\bibliographystyle{ieeetr}
\end{document}